\newcommand{\Cross}{\mathbin{\tikz [x=2ex,y=2.5ex,line width=.2ex] \draw (0,0) -- (1,1) (0,1) -- (1,0);}}%
\newtheorem{theorem}{Theorem}[section]
\newtheorem{Coro}[theorem]{Corollary}
\newtheorem{Lemma}[theorem]{Lemma}
\newtheorem{Sublemma}[theorem]{Sublemma}
\newtheorem{Prop}[theorem]{Proposition}
\newtheorem{Conj}[theorem] {Conjecture}
\theoremstyle{definition}
\newtheorem{Def}[theorem]{Definition}
\theoremstyle{remark}
\newtheorem{rk}[theorem]{Remark}
\newtheorem{example}[theorem]{Example}
\theoremstyle{definition}
\theoremstyle{remark}
\renewcommand{\aa}{\alpha}
\newcommand{\mycomment}[1]{}
\newcommand{\R}{\mathbb{R}}
\newcommand{\N}{\mathbb{N}}
\numberwithin{equation}{section}
\title[On Eisenhart's type theorem for sub-Riemannian metrics]{On  Eisenhart's type theorem for sub-Riemannian metrics on step $2$ distributions with {\normalsize $\mathrm{ad}$}-surjective Tanaka symbols}
\date{\today}
\thanks{ I.\ Zelenko is partly supported by NSF grant DMS 2105528 and Simons Foundation Collaboration Grant for Mathematicians 524213.}
\author{Zaifeng Lin}
\address{Zaifeng Lin\\
	Department of Mathematics\\
	Texas A\&M University\\
	College Station\\
	Texas \ 77843\\
	USA}
\email{linzf@tamu.edu}
\author{Igor Zelenko}
\address{Igor Zelenko\\
         Department of Mathematics\\
         Texas A\&M University\\
         College Station\\
         Texas \ 77843\\
         USA}
\email{zelenko@math.tamu.edu}
\urladdr{\url{http://www.math.tamu.edu/~zelenko}}
\begin{document}
	\subjclass[2020]{53C17, 58A30, 58E10, 53A15, 37J39, 35N10, 17B70}
	\keywords{Sub-Riemannian geometry, Riemannian geometry,  sub-Riemannian Geodesics, separation of variables, nilpotent approximation, Tanaka symbol, orbital equivalence, overdetermined PDEs, graded nilpotent Lie algebras}
\begin{abstract}
The classical result of Eisenhart states that if a Riemannian metric $g$ admits a Riemannian metric that is not constantly proportional to $g$  and has the same (parameterized) geodesics as $g$ in a neighborhood of a given point,  then $g$ is a direct product of two Riemannian metrics in this neighborhood.  We introduce a new generic class of step $2$ graded nilpotent Lie algebras, called \emph{$\mathrm{ad}$-surjective}, and extend the Eisenhart theorem to sub-Riemannian metrics on step $2$ distributions with $\mathrm{ad}$-surjective Tanaka symbols.  The class of ad-surjective step $2$ nilpotent Lie algebras contains a well-known class of algebras of H-type as a very particular case. 
\end{abstract}

\maketitle
\section{Introduction}
\subsection{Affine equivalence in Riemannian geometry: nonrigidity and product structure}
 The paper is devoted to a problem in sub-Riemannian geometry but we start with a historical overview of the same problem in Riemannian geometry. Recall that 
two Riemannian metrics   $g_{1}$ and $g_{2}$ on a manifold $M$ are called projectively equivalent if they have the same geodesics, as unparametrized curves, namely,  for every geodesic $\gamma(t)$ of $g_1$ there exists a reparametrization $t=\varphi(\tau)$ such that $\gamma\bigl(\varphi(\tau)\bigr)$ is a geodesic of $g_2$.
They are called  affinely equivalent, 
if they are projective equivalent and the reparametrizations $\varphi(\tau)$ above are affine functions, i.e., they are of the form $\varphi(\tau)=a\tau+b$.
We will write $g_1\stackrel{p}{\sim} g_2$ and $g_1\stackrel{a}{\sim} g_2$ in the case of projective and affine equivalence, respectively. In the sequel, we will mainly be interested in the local version of the same definitions for germs of Riemannian metrics at a point when conditions on the coincidence of geodesics hold in a neighborhood of this point.

From the form of the equation for Riemannian geodesics, it follows immediately that two Riemannian metrics are affinely equivalent if and only if they have the same geodesics as parametrized curves, which in turn is equivalent to the condition that they have the same Levi-Civita connection, i.e., one metric is parallel with respect to the Levi-Civita connection of the other.

Obviously, given any Riemannian metric $g$  and a positive constant $c$,  the metrics $cg$ and $g$ are affinely equivalent. The metric $cg$ will be said a \emph{ constantly proportional metric} to the metric $g$. The Riemannian metric $g$ is called \emph{affinely rigid} if metrics constantly proportional to it are the only 
affinely equivalent metric to it. 

A class of Riemannian metrics $g$ that are not affinely rigid are the metrics admitting a product structure, i.e., when the ambient manifold $M$ can be represented as $M=M_1\times M_2$, where each $M_1$ and $M_2$ are of positive dimension, and there exist Riemannian metrics $g_1$ and $g_2$ on $M_1$ and $M_2$, respectively, such that if $\pi_i: M\rightarrow M_i$, $i=1,2$, are canonical projections, then
\begin{equation} 
\label{prodmetric}
g=\pi_1^*g_1+\pi_2^* g_2.
\end{equation}
Then, obviously, for every two positive constants $C_1$ and $C_2$ 
\begin{equation}
\label{C1C2} 
g\stackrel{a}\sim\left(C_1\pi_1^*g_1+C_2\pi_2^* g_2\right)
\end{equation}
and the metric $\left(C_1\pi_1^*g_1+C_2\pi_2^* g_2\right)$ is not constantly proportional to $g$ if $C_1\neq C_2$, i.e., the metric $g$ is not affinely rigid. In 1923 L. P.  Eisenhart proved that locally the converse is true, i.e., the following theorem holds

\begin{theorem} [\cite{Eisenhart1923}] 
\label{eisenthm} If a Riemannian metric $g$ is not affinely rigid near a point $q_0$, i.e., admits a locally affinely equivalent non-constantly proportional Riemannian metric in a neighborhood of a point $q_0$,  
	then the metric $g$ is the direct product of two Riemannian metrics in a neighborhood of $q_0$.
 \end{theorem}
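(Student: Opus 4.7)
The plan is to reduce the theorem to the local de Rham decomposition theorem by producing a non-trivial parallel orthogonal splitting of $TM$ out of the second metric.

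First I would encode the relationship between $g_1 := g$ and an affinely equivalent metric $g_2$ in a single tensor. Since $g_1$ is positive definite, there exists a unique $(1,1)$-tensor field $L$ on $M$ determined by
\begin{equation}
g_2(X,Y) = g_1(LX, Y), \qquad X,Y \in TM,
\end{equation}
and $L$ is $g_1$-self-adjoint and positive definite. The hypothesis of affine equivalence translates, as recalled in the paper, into the equality $\nabla^{g_1} = \nabla^{g_2}$ of Levi-Civita connections. Write $\nabla$ for this common connection. The first key step is to differentiate the defining relation of $L$: from $\nabla g_1 = 0$ and $\nabla g_2 = 0$ one obtains $\nabla L = 0$, i.e.\ $L$ is parallel with respect to $\nabla$.

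Next I would exploit parallelism to get a splitting. Because $L$ is $g_1$-self-adjoint and parallel, parallel transport along any curve commutes with $L$ and preserves the $g_1$-inner product, so eigenvalues of $L$ are locally constant and the eigenspace distributions $E_{\lambda_1}, \dots, E_{\lambda_k}$ are smooth, pairwise $g_1$-orthogonal, and parallel. The non-constant-proportionality hypothesis means $g_2 \neq c\, g_1$ for any constant $c$, which together with $\nabla L = 0$ (forcing the eigenvalues to be constants on a connected neighborhood) forces $L$ to have at least two distinct eigenvalues near $q_0$. Hence we obtain a non-trivial $\nabla$-parallel, $g_1$-orthogonal decomposition
\begin{equation}
TM = E_1 \oplus E_2
\end{equation}
in a neighborhood of $q_0$, where $E_1$ is one eigenspace and $E_2$ is the $g_1$-orthogonal complement (the sum of the remaining ones).

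Finally, I would invoke the local de Rham decomposition theorem. Parallel distributions are automatically involutive (since the Levi-Civita connection is torsion-free, $[X,Y] = \nabla_X Y - \nabla_Y X \in E_i$ whenever $X,Y \in E_i$), so $E_1$ and $E_2$ integrate to complementary foliations $\mathcal{F}_1$, $\mathcal{F}_2$ through $q_0$. Choose local coordinates adapted to these foliations, so that $M$ is locally diffeomorphic to $M_1 \times M_2$ with $E_i = \ker d\pi_{3-i}$. Parallelism of $E_1, E_2$ together with their $g_1$-orthogonality then implies that $g_1$ has no mixed components and that each block depends only on the corresponding set of coordinates; this is precisely the statement that $g = \pi_1^* g_1' + \pi_2^* g_2'$ locally, proving the theorem.

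The conceptual skeleton is straightforward, and I expect the main obstacle to be the last step: verifying that the two parallel orthogonal complementary distributions actually yield a metric product and not merely a product of manifolds with a compatible splitting of $TM$. This requires carefully using $\nabla L = 0$ (equivalently, parallelism of each $E_i$) to show that, in coordinates adapted to the product, the metric components $g_1(\partial_{x^a}, \partial_{x^b})$ with $x^a$ in the first factor and $x^b$ in the second factor vanish, and that the surviving block $g_1|_{E_i}$ is Lie-constant along vectors in $E_{3-i}$.
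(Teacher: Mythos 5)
Your argument is correct and follows exactly the route the paper itself indicates for this classical result: the transition operator $S$ from \eqref{transition_op_eq} is parallel for the common Levi-Civita connection, its eigenspace decomposition gives a nontrivial parallel $g$-orthogonal splitting of $TM$, and the local de Rham argument yields the metric product. Since the paper cites Eisenhart and only sketches this reduction rather than reproving it, there is nothing further to compare; your final step (vanishing of mixed components and block independence in adapted coordinates) is the standard and correct way to close the argument.
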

This theorem is closely related to (and actually is a local version of) the De Rham decomposition theorem (\cite{deRham1952}) on the direct product structure of a simply connected complete Riemannian manifolds in terms of the decomposition of the tangent bundle into invariant subbundles with respect to the action of the holonomy group. Indeed, if $g_1 \stackrel{a}\sim g_2$
and these metrics are not constantly proportional, then 
the eigenspaces of the transition operators between these metrics (see \eqref{transition_op_eq} below for the definition of the transition operator) form such a decomposition of the tangent bundle  with respect to the action of the holonomy group of both  $g_1$ and $g_2$ (recall that they have the same Levi-Civita connection).
\subsection{Affine equivalence of sub-Riemannian metrics: the main conjecture}
First, recall that a distribution $D$ on a manifold $M$ is a subbundle of the tangent bundle $TM$. 
A sub-Riemannian manifold/structure is a triple $(M, D,g)$, where $M$ is a smooth manifold, $D$ is a  bracket-generating distribution, and for any $q$, $g(q)$ is an inner product on $D(q)$ which depends smoothly on $q$. We say that $g$ is a sub-Riemannian metric on $(M, D)$. A Riemannian manifold/structure/metric appears as the particular case where $D=TM$. 

In the sequel, we will assume that the distribution $D$ is bracket-generating, i.e. for every point $q\in M$ the iterative Lie brackets of vector fields tangent to a distribution $D$ (i.e., of sections of $D$) span the tangent space $T_qM$. In more details, 
one can define a filtration 
\begin{equation}
\label{filtr}
D=D^{1}\subset D^{2}\subset\ldots D^j\subset \ldots
\end{equation}
of the tangent bundle, called a \emph{weak derived flag}, as follows:  set $D=D^{1}$ and define recursively
\begin{equation}
\label{Dpower1}  
D^{j}=D^{j-1}+[D,D^{j-1}], \quad  j>1.
\end{equation}
If $X_1,\ldots, X_m$ are $m$ vector fields constituting a local basis of a distribution $D$,
then $D^{j}(q)$ is the linear span of all iterated Lie brackets of these vector fields, of length not greater than  $j$,  evaluated at a point $q$,
\begin{equation}
\label{Dpower2}
D^{j}(q)=  {\rm span}\{\left[X_{i_1}(q),\ldots [X_{i_{s-1}},X_{i_s}](q)\ldots\right]:  (i_1,\ldots,i_s)\in [1:m]^s, s\in[1:j] \}
\end{equation}
(here given a positive integer $n$ we denote by $[1:n]$ the set $\{1, \ldots, n\}$).
A distribution $D$ is called \emph{bracket-generating} (or \emph{completely nonholonomic}) if for any $q$ there exists $\mu(q)\in\mathbb N$ such that $D^{\mu(q)}(q)=T_q U$. The number $\mu(q)$ is called the \emph{degree of nonholonomy} of $D$ at a point $q$. If the degree of nonholonomy is equal to a constant $\mu$ at every point, one says that $D$ is \emph{step $\mu$ distribution}.

Since we work locally, the assumption of bracket-genericity is not too restrictive: if a distribution is not bracket-generating then in a neighborhood $U$ of a generic point there exists a positive integer $\mu$ such that $D^{\mu+1}=D^{\mu}\subsetneqq TM$. So,  $D^\mu$ is a proper involutive subbundle of $TU$ and the distribution $D$ is bracket-generating on each integral submanifold of  $D^\mu$ in $U$. So, we can restrict ourselves to these integral submanifolds instead of $U$.


What are sub-Riemannian geodesics? There are at least two different approaches to this concept. One approach is variational: a geodesic is seen as an extremal trajectory, i.e., a  candidate for the ``shortest" or ``energy-minimizing" path connecting its endpoints,  with respect to the corresponding length or energy functional. The other approach is differential-geometric: geodesic is the ``straightest path"   i.e., the curves for which the vector field of velocities is parallel along the curve, with respect to a natural connection. 
While in  Riemannian geometry these two approaches lead to the same set of trajectories, in proper sub-Riemannian geometry (i.e., when $D\neq TM$) they lead to different sets of trajectories (see \cite{Alekseevsky2020} for details), and in general, for the second approach, the natural connection only exists under additional (and rather restrictive) assumptions of constancy of sub-Riemannian symbol (\cite{Morimoto2008}).

In the present paper, we consider the geodesic defined by the variational approach.
A horizontal curve $\gamma:[a, b]\rightarrow  M$ is an absolutely continuous curve tangent to $D$, i.e., $\gamma'(t) \in D \left(\gamma(t)\right)$. In the sequel, the manifold $M$ is assumed to be connected. By the Rashevskii-Chow theorem the assumption that  $D$ is bracket-generating guarantees that the space of horizontal curves connecting two given points $q_0$ and $q_1$ is not empty.  The following energy-minimizing problem:
\begin{equation}
\label{energy}
\begin{split}
~& E(\gamma) = \displaystyle\int_a^b g\left(\gamma'(t),\gamma'(t)\right)\, dt\rightarrow \textnormal{min},\\
~&\gamma'(t)\in D \left(\gamma(t)\right) \quad \textnormal{ a.e. } t,\\
~& \gamma(a)=q_0, \quad \gamma(b)=q_1.
\end{split}
\end{equation}
can be solved using the Pontryagin Maximum Principle (\cite{AgrSach, Pont}) in optimal control theory that defines special curves in the cotangent bundle $T^*M$, called the \emph{Pontryagin extremals}, so that a minimizer of the optimal control problem \eqref{energy} is a projection from $T^*M$ to $M$ of some \emph{Pontryagin extremal} (for more explicit description of Pontryagin extremals see the beginning of section \eqref{orb_sec} below). 
\begin{Def}
\label{geod}
The \emph{(variational) sub-Riemannian geodesics} are projections of the Pontryagin extremals of the optimal control problem \eqref{energy}.
\end{Def}

Note that in the Riemannian case geodesics given by Definition \ref{geod}  coincides with the usual Riemannian geodesics.
We thus extend the definitions of projective and affine equivalences of Riemannian metrics to the general sub-Riemannian case  in the following way.

\begin{Def}
Let $M$ be a manifold and $D$ be a bracket-generating distribution on $M$. Two sub-Riemannian metrics  $g_1$ and $g_2$ on  $(M, D)$ are called \emph{projectively equivalent} at $q_0\in M$ if they have the same geodesics, up to a reparameterization, in a neighborhood of $q_0$. They are called \emph{affinely equivalent} at $q_0$ if they have the same geodesics, up to affine reparameterization, in a neighborhood of $q_0$. 
\end{Def}

Again, we will write   $g_1\stackrel{p}{\sim} g_2$ and $g_1\stackrel{a}{\sim} g_2$ in the case of projective and affine equivalence, respectively.
By complete analogy with the Riemannian case,  for a sub-Riemannian metric $g$ on $(M, D)$  and a positive constant $c$  the metrics $c g$ and $g$ are affinely equivalent. The metric $c g$ will be said a \emph{constantly proportional metric} to the metric $g$. 

\begin{Def}
A sub-Riemannian metric $g$ on $(M, D)$ is called \emph{affinely rigid} if the sub-Riemannian metrics constantly proportional to it are the only 
sub-Riemannian metrics on $(M, D)$ that are affinely equivalent to $g$. 
\end{Def}
As in the Riemannian case, examples of affinely nonrigid sub-Riemannian structures can be constructed with the help of the appropriate notion of product structure. For this we first have to define distributions admitting  product structure as follows: 

\begin{Def}
A distribution $D$ on a manifold $M$ \emph{admits a product structure} if there exist two manifold $M_1$ and $M_2$ of positive dimension  endowed with  two distributions $D_1$ and $D_2$ of positive rank  (on $M_1$ and $M_2$, respectively)  such that the following two conditions holds:
\begin{enumerate}
\item $M=M_1\times M_2$;
\item If $\pi_i: M\rightarrow M_i$, $i=1,2$, are the canonical projections and 
$\pi_i^*D_i$ denotes the pullback of the distribution $D_i$ from $M_i$ to $M$, i.e.,
$$\pi_i^* D_i (q)=\{v\in T_q M: d \pi_i(q) v\in D_i\bigl(\pi_i(q)\bigr)\},$$ then 
\begin{equation} 
\label{proddist}
D(q)=\pi_1^*D_1(q)\cap \pi_2^*D_2(q).
\end{equation}
\end{enumerate}
In this case, we will write that $(M, D)=(M_1,D_1)\times (M_2, D_2)$. 
\end{Def} 
\begin{Def}
A sub-Riemannian structure 
$(M,D,g)$ \emph{admits a product structure} if there exist (nonempty) sub-Riemannian structures $(M_1, D_1, g_1)$ and $(M_2, D_2, g_2)$ such that $(M,D)=(M_1,D_1)\times (M_2,D_2)$ and 
 if $\pi_i: M\mapsto M_1$ are the canonical projections then identity \eqref{prodmetric} holds.
In this case we will write that $(M,D,g)=(M_1,D_1,g_1)\times (M_2,D_2, g_2)$.
\end{Def}

It is easy to see that if $(M,D,g)=(M_1,D_1,g_1)\times (M_2,D_2, g_2)$ then this sub-Riemannian metric is affinely equivalent to 
\begin{equation}
\label{nonconstprop}
(M_1,D_1,c_1g_1)\times (M_2,D_2, c_2g_2)
\end{equation}
for every two positive constants $c_1$ and $c_2$, but the latter metric is not constantly proportional to $(M, D, g)$ , if $c_1\neq c_2$, i.e., a \emph{sub-Riemannian  metric  admitting product structure} is not affinely rigid. The main question is whether or not the converse of this statement, at least in a local setting, i.e., the analog of the Eisenhart theorem (Theorem \ref{eisenthm}) holds.

\begin{Conj} [\cite{jmz2019}] 
\label{conjecture}
If a sub-Riemannian metric $g$  is not affinely rigid near a point $q_0$, i.e., admits a locally affinely equivalent non-constantly proportional  sub-Riemannian metric in a neighborhood of a point $q_0$,  
	then the metric $g$ is the direct product of two sub-Riemannian metrics in a neighborhood of $q_0$.
\end{Conj}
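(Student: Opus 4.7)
The plan is to follow Eisenhart's strategy — extract a transition operator from the pair of affinely equivalent metrics, show it is ``parallel'' in the appropriate sense, and then use its eigenspace decomposition to produce a product structure — but to replace Levi-Civita parallel transport by symplectic Hamiltonian dynamics on $T^*M$, since in general there is no canonical sub-Riemannian connection. Concretely, I first lift to $T^*M$ and work with the two fiberwise-quadratic sub-Riemannian Hamiltonians $h_1, h_2$ from \eqref{Hamiltonian}. The hypothesis $g_1 \stackrel{a}{\sim} g_2$ means the normal extremals of $\vec{h}_1$ and $\vec{h}_2$ project to the same base curves up to affine reparameterization; translating this into a covector-level statement, I expect $h_2$ to be an affine function of $h_1$ along each $\vec{h}_1$-orbit passing through generic covectors, and, more strongly, the Hessians of the two Hamiltonians along each fiber $T_q^*M$ to be intertwined by a symmetric operator. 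This yields a fiberwise $g_1$-symmetric transition endomorphism $L: D\to D$ defined by $g_2(u,v) = g_1(Lu,v)$, together with a first-order invariance equation for $L$ along all normal extremal flows of $g_1$.

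Next I convert this flow invariance into pointwise algebraic information. Even without a global connection, the Jacobi-type symplectic transport on $T^*M$ (Agrachev--Zelenko curvature theory) expresses the derivative of $L$ along the base projection of an extremal as a bracket of $L$ with curvature-like operators determined by $g_1$ and $D$. Demanding that the $L$-eigenbundle decomposition $D = \bigoplus_\alpha D_\alpha$ be preserved under all such derivatives, and polarizing over the initial covector, extracts algebraic constraints valid at every point: each $D_\alpha$ must be ``symbol-compatible'' with the weak derived flag \eqref{Dpower1}, and the Lie brackets of sections from different eigenspaces must land in controlled subspaces of the tangent bundle.

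The third step extends the decomposition from $D$ to the full $TM$. Inductively define $\widetilde D_\alpha^{\,j}$ by iterated Lie brackets of sections of $D_\alpha$, aiming to prove $TM = \bigoplus_\alpha \widetilde D_\alpha^{\,\mu}$ with each summand involutive and with $D_\alpha = D\cap \widetilde D_\alpha^{\,\mu}$. The pointwise constraints from the second step, combined with $g_1$-orthogonality of distinct eigenspaces, should force the required involutivity. Given such a splitting, Frobenius produces local coordinates in which $M = \prod_\alpha M_\alpha$, and then \eqref{proddist} for $D$ and \eqref{prodmetric} for $g$ follow routinely from the invariance of $L$ and the fact that the restrictions of $g$ to the $\widetilde D_\alpha^{\,\mu}$-leaves depend only on the corresponding factor coordinates.

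The main obstacle is the third step, and specifically showing that the cross-brackets $[D_\alpha, D_\beta]$ for $\alpha\neq\beta$ lie inside $\widetilde D_\alpha^{\,\mu}\oplus \widetilde D_\beta^{\,\mu}$ in a way compatible with involutivity. Nothing a priori rules out an affinely equivalent pair whose Tanaka symbol mixes eigenspaces through the bracket $D\times D\to TM/D$, and the obstructions become more intricate at higher depth in the flag \eqref{filtr}, where the interaction between $L$-eigenspaces and the iterated brackets is governed by higher-order prolongations rather than by $L$ itself. A full proof of the conjecture likely requires a genuine non-holonomic analogue of the De Rham decomposition — perhaps by restricting first to sub-Riemannian structures with constant Tanaka symbol and using Morimoto's canonical connection, then densifying — together with a separate argument controlling how the abnormal locus $D^\perp$ (which is metric-independent, yet bounds the normal extremal flow) interacts with the eigenspace splitting. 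Identifying such a mechanism, or exhibiting a counterexample beyond the step-$2$ ad-surjective class treated in this paper, is the crux of resolving the conjecture.
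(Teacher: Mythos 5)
The statement you are trying to prove is Conjecture \ref{conjecture}, which is \emph{open}: the paper does not prove it in general, but only for step $2$ distributions whose Tanaka symbol decomposes into $\mathrm{ad}$-surjective indecomposable pieces (Theorem \ref{mainthm}). Your proposal is not a proof either, and you say so yourself: the ``third step'' -- extending the eigenspace splitting $D=\bigoplus_\alpha D_\alpha$ of the transition operator to an involutive splitting of $TM$ and controlling the cross-brackets $[D_\alpha,D_\beta]$ -- is exactly the point where all known arguments stop, and your appeal to a ``Jacobi-type symplectic transport'' producing curvature-like operators that force symbol-compatibility is a hope, not an argument. Moreover, the parts of your plan that do work are already known and are quoted in the paper from \cite{jmz2019}: constancy of the eigenvalues of the transition operator (Proposition \ref{le:constant_alphai}), the passage to orbital equivalence of the flows of $\vec h_1,\vec h_2$ (Proposition \ref{th:orb.to.proj}), and the decomposition of the Tanaka symbol into the algebras $\mathfrak m^i(q)$ built from the eigendistributions (Proposition \ref{Tanaka_decomp_theorem}, Theorem \ref{Tanaka_decomp_thm}). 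So the proposal essentially re-derives the known reduction and leaves precisely the open core untouched.

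To see concretely what is missing, compare with what the paper has to do even in its restricted setting: it must prove $D(q)\cap (D_i)^2(q)=D_i(q)$ (Proposition \ref{step1prop}), $(D_i)^3=(D_i)^2$ (Proposition \ref{PropD2D3}), and involutivity of $D_r^2+D_t^2$ (Proposition \ref{Dij^2_involutive}), and none of these follow formally from flow-invariance of the transition tensor; they are extracted from the vanishing of infinitely many minors of the overdetermined fundamental algebraic system \eqref{A.phi.B}, and that extraction is only possible because the $\mathrm{ad}$-surjectivity hypothesis yields the quasi-normal frames \eqref{quasi_complete} in which specific monomial coefficients can be isolated. Nothing in your sketch replaces this mechanism for general symbols, for higher step, or near the abnormal locus, so the gap you name in your last paragraph is genuine and is exactly the gap between Conjecture \ref{conjecture} and Theorem \ref{mainthm}; as written, your text is a research program, not a proof of the stated conjecture.
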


In this paper, we prove this conjecture for sub-Riemannian metrics on a class of step 2 distributions, see Theorem \ref{mainthm}.

\section{The role of Tanaka symbol/nilpotent approximation and the main result}

Conjecture \ref{conjecture} is still widely open. In the present paper, we prove it for sub-Riemannian metrics on a particular, but still rather large class of distributions (see Theorem \ref{mainthm} below).  To formulate our main result (Theorem \ref{mainthm}) we need to introduce some terminology.

\subsection{Direct product structure on the level of Tanaka symbol/nilpotent approximation}
In \cite{jmz2019} we proved,  among other things, a weaker product structure result for affinely nonrigid sub-Riemannian structures, in which the product structure necessarily occurs on the level of Tanaka symbol/ nilpotent approximation of the the sub-Riemannian structure. 

To define the Tanaka symbol of the distribution $D$ at a point $q$ we need another assumption on $D$ near $q$, called \emph{equiregularity}.
 A distribution $D$ is called \emph{equiregular} at a point $q$ if there is a neighborhood $U$ of $q$ in $M$ such that for every $j>0$ the dimensions of subspaces $D^j(y)$ are constant for all $y\in U$, where $D^j$  is in \eqref{Dpower1} (equivalently, as in \eqref{Dpower2}). 
 Note that a bracket-generating distribution is equiregular at a generic point.

From now on we assume that $D$ is an equiregular bracket-generating distribution with the degree of nonholonomy $\mu$. 
Set 
\begin{equation}
\label{Tanaka_comp}
\mathfrak{m}_{-1}(q):=D(q), \quad  \mathfrak{m}_{-j}(q) : =D^{j}(q)/D^{j-1}(q), \,\, \forall j>1 
\end{equation}
and  consider the graded space
\begin{equation}
\label{symbdef}
\mathfrak{m}(q)=
\bigoplus_{j=-\mu}^{-1}\mathfrak{m}_j(q),
\end{equation}
associated with the filtration \eqref{filtr}.

The space $\mathfrak{m}(q)$  is endowed with the natural structure of a graded Lie algebra, i.e., with the natural Lie product $[\cdot, \cdot]$ such that
\begin{equation}
 \label{gradcond}
[\mathfrak{m}_{i_1}(q), \mathfrak{m} _{i_2}(q)]\subset \mathfrak{m}_{i_1+i_2}
\end{equation}
defined as follows: 

Let $\mathfrak{p}_j:D^j(q)\mapsto \mathfrak m_{-j}(q)$ be the canonical projection to a factor space. Take $Y_1\in\mathfrak m_{-i_1}(q)$ and $Y_2\in \mathfrak m_{-i_2}(q)$.
To define the Lie bracket $[Y_1,Y_2]$ take a local section $\widetilde Y_1$ of the distribution $D^{i_1}$ and
a local section $\widetilde Y_2$ of the distribution $D^{i_2}$ such that
\begin{equation}
\label{projbracket}
\mathfrak p_{i_1}\bigl(\widetilde Y_1(q)\bigr) =Y_1,\quad \mathfrak p_{i_2}\bigl(\widetilde Y_2(q)\bigr)=Y_2.
\end{equation}
It is clear from definitions of the spaces $D^j$ that $[\widetilde Y_1,\widetilde Y_2]\in D^{i_1+i_2}$. Then set
\begin{equation}
\label{Liebrackets}
[Y_1,Y_2]:=\mathfrak p_{i_1+i_2}\bigl([\widetilde Y_1,\widetilde Y_2](q)\bigr).
\end{equation}
It can be  shown (\cite{tan1, zeltan}) that the right hand-side  of \eqref{Liebrackets} does not depend on the choice of sections $\widetilde Y_1$ and
$\widetilde Y_2$. By constructions, it is also clear that \eqref{gradcond} holds.

\begin{Def}
The graded Lie algebra $\mathfrak{m}(q)$ from \eqref{symbdef} is called the \emph{symbol} of the distribution $D$ at the point $q$.
\end{Def}

By constructions, it is clear that the Lie algebra $\mathfrak{m}(q)$ is nilpotent.
The Tanaka symbol is the infinitesimal version of the so-called \emph{nilpotent approximation} of the distribution $D$ at $q$, which can be defined as the left-invariant distribution $\widehat D$ on the simply connected Lie group with the Lie algebra $\mathfrak{m} (q)$ and the identity $e$, such that $\widehat D(e)=\mathfrak{m}_{-1}(q)$. 

Further, since $D$ is bracket-generating, its Tanaka symbol $\mathfrak{m}(q)$ at any point is generated by the component $\mathfrak{m}_{-1}(q)$.

\begin{Def}
\label{fund}
A (nilpotent) $\mathbb Z_-$- graded Lie algebra 
\begin{equation}
\label{fundgrad}
\mathfrak{m}=\bigoplus_{j=-\mu}^{-1}\mathfrak{m}_j
\end{equation}
is called a \emph{fundamental} graded Lie algebra (here $\mathbb Z_-$ denotes the set of all negative integers), if it is generated by $\mathfrak m_{-1}$.
\end{Def}
The following notion will  be crucial in the sequel:

\begin{Def}
A fundamental graded Lie algebra $\mathfrak{m}$ is called \emph{decomposable} if it can be represented as a direct sum of two nonzero fundamental graded Lie algebras $\mathfrak{m}^1$ and $\mathfrak{m}^2$ and it is called \emph{indecomposable} otherwise. Here the $j$th component of $\mathfrak{m}$ is the direct sum of the $j$th components of $\mathfrak{m}^1$ and $\mathfrak{m}^2$.   
\end{Def}
Obviously, if a distribution $D$ admits product structure then its Tanaka symbol at any point is decomposable.

\begin{example} [contact and even contact distributions]
Assume that $D$ is a corank 1 distributions, $\dim D(q)=\dim M-1$ , and assume $\alpha$ is its defining $1$-form, i.e.,   $D=\mathrm{ker} \,\alpha$ . 
\begin{itemize}
\item Recall that the distribution $D$ is called \emph{contact} if $\dim M$ is odd and the form $d\alpha|_D$ is non-degenerate. In this case the Tanaka symbol at a point $q$ is isomorphic to the Heisenberg algebra $\mathfrak{m}_{-1}(q)\oplus\mathfrak{m}_{-2}(q)$ of  dimension equal to $\dim M$, where $\mathfrak{m}_{-2}(q)$ is the (one-dimensional) center and the brackets on $\mathfrak{m}_{-1}(q)$ ($\cong D(q)$) are
 given by $[X, Y]:=d \alpha(X, Y) Z$, where $Z$ is the generator of $\mathfrak{m}_{-1}$ so that $\alpha(Z)=1$. Note that the Heisenberg algebra is indecomposable as the fundamental graded Lie algebra. Otherwise, since 
 $\dim \mathfrak m_{-2}(q)=1$ one of the components in the nontrivial decomposition of $\mathfrak m(q)$ will be commutative and belong to $\mathfrak m_{-1}(q)$ and hence to the kernel of $d\alpha|_D$, which contradicts the condition of contactness.
\item Recall that the distribution $D$ is called \emph{quasi-contact} (in some literature \emph{even contact}) if $\dim M$ is even and the form $d\alpha|_D$ has a one-dimensional kernel (i.e., of the minimal possible dimension for a skew-symmetric form on an odd-dimensional vector space). In this case by the arguments similar to the previous item the Tanaka symbol is the direct some of the Heisenberg algebra (of dimension $\dim M-1$) and $\mathbb R$ (the kernel of $d\alpha|_D$), i.e., the Tanaka symbol is decomposable.
\end{itemize}
\end{example}
\begin{rk}
\label{unique_decomp}
\emph{It is easy to show that the decomposition of a fundamental graded $\mathfrak{m}$ Lie algebra into indecomposable fundamental Lie algebras is unique modulo the center of $\mathfrak{m}$ and a permutation of components.} 
\end{rk}
The following theorem is a consequence of the results proved in \cite{jmz2019}  and it is a weak version of  Conjecture \ref{conjecture}:

\begin{theorem}
\cite[a  consequence of Theorem 7.1, Proposition 4.7, and Corollary 4.9 there]{jmz2019}
\label{Tanaka_decomp_thm}
If a sub-Riemannian metric on an equiregular distribution $D$ is not affinely rigid near a point $q_0$ then its Tanaka symbol at $q_0$ is decomposable. 
\end{theorem}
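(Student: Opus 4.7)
My plan is to follow the strategy suggested by the attribution to \cite{jmz2019}: reduce the question to an algebraic statement about the nilpotent approximation, and then exploit the transition operator associated with an affine equivalence.

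First I would define the transition operator $S\colon D\to D$ between two affinely equivalent sub-Riemannian metrics $g_1$ and $g_2$ by $g_2(X,Y)=g_1(SX,Y)$; this $S$ is $g_1$-symmetric and positive definite, and it fails to be a constant multiple of the identity precisely when $g_1$ and $g_2$ are not constantly proportional. The affine equivalence assumption says that, after projecting to $M$, the flows of the Hamiltonian vector fields $\vec h_1$ and $\vec h_2$ on common level sets define the same unparameterized curves with an affine relation of parameters. Translated into Hamiltonian language on $T^*M$, this becomes a system of first-order PDEs for $S$ along normal extremals of $g_1$. The key preliminary step is to write these PDEs intrinsically and verify that they are preserved under the zoom-in procedure that produces the nilpotent approximation at $q_0$ — this is the content of the relevant pieces of \cite{jmz2019} (Theorem 7.1 and its preparatory statements).

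Next I would pass to the nilpotent approximation $(\widehat M, \widehat D, \widehat g_1)$ at $q_0$, which is a left-invariant sub-Riemannian structure on the simply connected nilpotent Lie group with Lie algebra $\mathfrak{m}(q_0)$. The previous step furnishes a left-invariant affinely equivalent metric $\widehat g_2$, and a corresponding left-invariant transition operator $\widehat S$ on $\widehat D$, which is still non-scalar (otherwise one could unwind the limit to contradict the non-rigidity at $q_0$). At a single point this $\widehat S$ is a $g_1(q_0)$-symmetric operator on $\mathfrak{m}_{-1}(q_0)$, and its distinct eigenvalues produce a $g_1(q_0)$-orthogonal decomposition $\mathfrak{m}_{-1}(q_0)=V_1\oplus\cdots\oplus V_k$ with $k\geq 2$.

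The final and main step is to promote this eigenspace decomposition of $\mathfrak{m}_{-1}(q_0)$ to a direct sum decomposition of the whole graded Lie algebra $\mathfrak{m}(q_0)=\bigoplus_j\mathfrak{m}_j(q_0)$ as fundamental graded Lie algebras. Here I would use that, on the nilpotent approximation, the PDE satisfied by $\widehat S$ on normal extremals, when read off at the identity and its neighbourhood, forces the bracket of two vectors belonging to distinct eigenspaces of $\widehat S$ either to vanish or to produce central elements that split off naturally; equivalently, $[V_i,V_j]=0$ in $\mathfrak{m}_{-2}(q_0)$ for $i\neq j$. Since $\mathfrak{m}(q_0)$ is fundamental (generated by $\mathfrak{m}_{-1}(q_0)$), setting $\mathfrak{m}^{(i)}_{-1}:=V_i$ and $\mathfrak{m}^{(i)}_{-j}:=\text{span of length-}j$ brackets in $V_i$ yields the desired decomposition modulo the center; the uniqueness remark \emph{(Remark \ref{unique_decomp})} after absorbing the center completes the argument. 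The main obstacle is exactly this last algebraic step — showing $[V_i,V_j]=0$ for $i\neq j$ — which in \cite{jmz2019} is handled by Proposition 4.7 and Corollary 4.9, and I would invoke these directly rather than re-deriving them.
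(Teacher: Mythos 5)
The paper does not actually reprove this theorem: it is quoted from \cite{jmz2019}, and the route it relies on stays entirely on $M$ — Proposition \ref{le:constant_alphai} (constancy of the eigenvalues of the transition operator, which is what guarantees $S_{q_0}$ itself is non-scalar, not a ``limit-unwinding'' argument), the eigendistribution splitting \eqref{eigendecomp}, and Proposition \ref{Tanaka_decomp_theorem}, where the components $\mathfrak m^i(q)$ are defined as the quotients $(D_i)^j(q)/\bigl((D_i)^j(q)\cap D^{j-1}(q)\bigr)$ and the cited results of \cite{jmz2019} (obtained through the orbital diffeomorphism and the fundamental algebraic system, not through an invariance of the equivalence equations under the zoom-in to the nilpotent approximation) assert that $\mathfrak m(q)$ is the \emph{direct} sum of these fundamental graded algebras. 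Your plan of passing to a left-invariant picture on the nilpotent approximation, with a left-invariant $\widehat g_2$ and non-scalar $\widehat S$, inserts intermediate claims that are themselves nontrivial and are not what the cited statements say (in particular, Proposition 4.7 of \cite{jmz2019} is the constancy of eigenvalues, not a bracket-vanishing statement).

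The genuine gap is in your final algebraic step: knowing $[V_i,V_j]=0$ for $i\neq j$ is \emph{not} sufficient to conclude that $\mathfrak m(q_0)$ is decomposable. Letting $\mathfrak m^{(i)}$ be the subalgebra generated by $V_i$, fundamentality gives $\mathfrak m=\sum_i\mathfrak m^{(i)}$ and the subalgebras commute pairwise, but the sum need not be direct: the parts of degree $\leq -2$ may overlap, and the overlap cannot in general be ``absorbed into the center.'' Concretely, take the $5$-dimensional Heisenberg algebra with $\mathfrak m_{-1}=\mathrm{span}\{X_1,X_2,X_3,X_4\}$, $\mathfrak m_{-2}=\mathrm{span}\{Z\}$, $[X_1,X_2]=[X_3,X_4]=Z$ and all other brackets of generators zero; the splitting $V_1=\mathrm{span}\{X_1,X_2\}$, $V_2=\mathrm{span}\{X_3,X_4\}$ satisfies $[V_1,V_2]=0$, yet this algebra is indecomposable (its center meets $\mathfrak m_{-1}$ trivially and $\dim\mathfrak m_{-2}=1$). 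So the directness of the sum — equivalently, that the subalgebra generated by $V_i$ meets the span of the others only trivially in each degree — is an essential part of the content of the results you propose to invoke, and it is exactly what the quotient definition of $\mathfrak m^i_{-j}$ in \eqref{weakdecompsymb_1} together with Proposition \ref{Tanaka_decomp_theorem} encodes; Remark \ref{unique_decomp} is a uniqueness statement and cannot supply it. Your argument would close if you invoked the full direct-sum statement of \cite{jmz2019}, but as written the ``main obstacle'' you isolate ($[V_i,V_j]=0$) is strictly weaker than what is needed.
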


In other words, \emph{the problem of affine equivalence is nontrivial only on the distributions with decomposable Tanaka symbols} (at points where the distribution is equiregular).

\subsection{Ad-surjective Tanaka symbols and the main result}
Now we are almost ready to formulate the main result of the paper. We restrict ourselves here to step $2$ distributions, i.e., when $D^2=TM$. Such distributions are automatically equiregular (at any point). Then it is clear that the components in the decomposition of the Tanaka symbols of such distribution are of step not greater than $2$ (i.e., with $\mu\leq 2$ in \eqref{fundgrad}). So, they are either of step $2$ or commutative.   

\begin{Def}
\label{ad_surj_def}
We say that a step $2$ fundamental graded Lie algebra $\mathfrak{m}=\mathfrak{m}_{-1}\oplus \mathfrak{m}_{-2}$ is \emph{$\mathrm{ad}$-surjective} if there exists $X\in \mathfrak{m}_{-1}$ such that the map $\mathrm{ad} X: \mathfrak{m}_{-1}\rightarrow \mathfrak{m}_{-2}$, 
$$Y\mapsto [X, Y], \quad Y\in\mathfrak{m}_{-1},$$
is surjective. An element $X\in \mathfrak{m}_{-1}$ for which $\mathrm{ad} X$ is surjective is called an \emph{$\mathrm{ad}$-generating element} of the algebra $\mathfrak m$.
\end{Def}
\begin{rk} 
\label{sum_ad_surj}
\emph{Note that the direct sum $\mathfrak {m}^1\oplus\mathfrak {m}^2$ of two $\mathrm{ad}$-surjective Lie algebras $\mathfrak {m}^i=\mathfrak {m}_{-1}^i\oplus \mathfrak {m}_{-2}^i$, $i=1,2$, is $\mathrm{ad}$-surjective. Indeed, if $X_i\in \mathfrak {m}^i_{-1}$, $i=1,2$, are such that  $\mathrm{ad} \, X_i: \mathfrak {m}^i_{-1}\rightarrow \mathfrak {m}^i_{-2}$ is surjective, then $$\mathrm{ad} (X_1+X_2): \mathfrak {m}^1_{-1}\oplus\mathfrak{m}^2_{-1} \rightarrow \mathfrak{m}^1_{-2}\oplus\mathfrak{m}^2_{-2}$$ is surjective as well. And vice versa, if a step $2$ fundamental Lie algebra $\mathfrak m$ is $\mathrm {ad}$-surjective, then any component of its decomposition into fundamental graded Lie algebra is $\mathrm {ad}$-surjective: the projection of an $\mathrm{ad}$-generating element of $\mathfrak {m}$ to any component is $\mathrm{ad}$-generating element of this component}.
\end{rk}
\begin{rk}
\emph{Nilpotent Lie algebras of H-type, introduced by A. Kaplan  \cite{kaplan} in 1980, and extensively studied since then are $\mathrm{ad}$-surjective, because, among other properties of Lie algebras of H-type, it is required that every element of $\mathfrak m_{-1}$ has to be $\mathrm{ad}$-generating.}        
\end{rk}

The following proposition will be proved in the Appendix \ref{appendix A}: 

\begin{Prop}
\label{codim3}
Any step $2$ fundamental graded Lie algebra $\mathfrak{m}=\mathfrak{m}_{-1}\oplus \mathfrak{m}_{-2}$ such that the following three condition holds
\begin{enumerate}

\item $\dim\, \mathfrak{m}_{-2}\leq 3$;

\item $\dim\, \mathfrak{m}_{-2}<\dim \mathfrak{m}_{-1}$; \item
the intersection of $\mathfrak{m}_{-1}$ with the center of $\mathfrak{m}$ is trivial
\end{enumerate}
is  $\mathrm{ad}$-surjective. \end{Prop}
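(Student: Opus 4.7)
The plan is to pass to a concrete linear-algebraic reformulation by choosing a basis $Z_1,\ldots,Z_k$ of $\mathfrak{m}_{-2}$ and defining skew bilinear forms $\omega_1,\ldots,\omega_k\in\Lambda^2\mathfrak{m}_{-1}^*$ via $[X,Y]=\sum_{i=1}^k \omega_i(X,Y)Z_i$. With $n=\dim\mathfrak{m}_{-1}$ and $P=\mathrm{span}(\omega_1,\ldots,\omega_k)$, note that $\mathrm{ad}\,X$ is surjective iff the covectors $\omega_1(X,\cdot),\ldots,\omega_k(X,\cdot)$ are linearly independent in $\mathfrak{m}_{-1}^*$, equivalently iff $X$ fails to lie in the radical of every nonzero $\omega\in P$. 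Under this dictionary the hypotheses translate as: $\omega_1,\ldots,\omega_k$ are linearly independent in $\Lambda^2\mathfrak{m}_{-1}^*$ (because $\mathfrak{m}_{-2}=[\mathfrak{m}_{-1},\mathfrak{m}_{-1}]$), and $\bigcap_i \mathrm{rad}(\omega_i)=0$ (condition~(3)).

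I would then argue by contradiction, assuming every $X\in\mathfrak{m}_{-1}$ lies in $\mathrm{rad}(\omega)$ for some nonzero $\omega\in P$, and bound the dimension of the incidence variety $T=\{(X,[\omega])\in\mathfrak{m}_{-1}\times\mathbb{P}(P):\omega(X,\cdot)=0\}$. Since any nonzero skew form has rank at least $2$, each fiber of $T\to\mathbb{P}(P)$ has dimension at most $n-2$, hence $\dim T\le(k-1)+(n-2)$. Surjectivity of $T\to\mathfrak{m}_{-1}$ forces $\dim T\ge n$, which already fails for $k\le 2$. For $k=3$ the crude bound is borderline, so I would stratify $\mathbb{P}(P)$ by rank: if the generic rank $r$ of forms in $P$ is at least $4$, the open stratum contributes at most $2+(n-r)\le n-2$, while any lower-rank stratum is a proper closed subset of $\mathbb{P}^2$ of dimension at most $1$ with fibers of dimension at most $n-2$, contributing at most $n-1$; hence $\dim T\le n-1<n$, contradiction. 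This leaves only the subcase $k=3,\ r=2$.

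In that case every nonzero $\omega\in P$ has rank exactly $2$ and is therefore decomposable, so $\omega\wedge\omega=0$ on all of $P$; polarizing yields $\omega_i\wedge\omega_j=0$ for all $i,j$. Writing $\omega_i=u_i\wedge v_i$ and considering the $2$-planes $\Pi_i=\mathrm{span}(u_i,v_i)\subset\mathfrak{m}_{-1}^*$, any two of which meet in a line, I would invoke the classical dichotomy: either (I) the three $\Pi_i$ share a common line, so $\omega_i=u\wedge v_i$ for a fixed $u$ and suitable $v_i$; or (II) the $\Pi_i$ all lie in a common $3$-dimensional subspace $W\subset\mathfrak{m}_{-1}^*$. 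In Case~(II) the joint radical contains $W^\perp$, of dimension $n-3\ge 1$, contradicting~(3). In Case~(I), linear independence of the $\omega_i$ together with~(3) forces $\{u,v_1,v_2,v_3\}$ to be a basis of $\mathfrak{m}_{-1}^*$, so $n=4$; but then for any $X$ with $u(X)\ne 0$ the identity $\omega_i(X,\cdot)=u(X)\,v_i-v_i(X)\,u$ combined with the linear independence of $\{u,v_1,v_2,v_3\}$ forces the three covectors $\omega_i(X,\cdot)$ to be linearly independent, so $\mathrm{ad}\,X$ is surjective, contradicting the standing assumption.

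The main obstacle I expect is the borderline subcase $k=3,\ r=2$, where the crude dimension count is inconclusive and one must appeal to the Pl\"ucker-type classification of three pairwise Pl\"ucker-orthogonal decomposable $2$-forms and then rule out Case~(I) by the direct computation above; the cases $k\le 2$ and $k=3$ with $r\ge 4$ reduce to routine dimension bookkeeping.
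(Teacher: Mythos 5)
Your proposal is correct, but it takes a genuinely different route from the paper's. You dualize the problem: the bracket is encoded as a $k$-dimensional space $P$ of skew-symmetric $2$-forms on $\mathfrak{m}_{-1}$ (with $k=\dim\mathfrak{m}_{-2}\le 3$, the forms linearly independent by fundamentality), failure of $\mathrm{ad}$-surjectivity at $X$ is read as $X$ lying in the radical of some nonzero $\omega\in P$, and the contradiction hypothesis is refuted by a dimension count on the incidence set $\{(X,[\omega]):\omega(X,\cdot)=0\}$, stratified by rank, with the borderline case ($k=3$, every nonzero form of rank $2$) settled by the classical dichotomy for three pairwise-incident $2$-planes: a common $3$-dimensional subspace is excluded by condition (2) (this configuration is exactly the truncated free symbol of Corollary \ref{codim3cor}), and a common line leads, via $\omega_i(X,\cdot)=u(X)v_i-v_i(X)u$, to an explicit $\mathrm{ad}$-generating element, while condition (3) excludes a common radical. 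The paper instead argues primally inside the Lie algebra: it picks $X_1$ maximizing $r=\mathrm{rank}(\mathrm{ad}X_1)$, shows by perturbing to $X_1+tX_i$ that $[\ker(\mathrm{ad}X_1),\mathfrak{m}_{-1}]\subseteq\mathrm{Im}(\mathrm{ad}X_1)$, disposes of $r=1$ at once, and for $r=2$ studies the minimal joint kernels $L_{X,\widetilde X}$, proving $L_{X,X_1}$ is independent of $X$ and hence central, contradicting (3). Your version makes it more transparent where each hypothesis enters and treats all rank configurations uniformly, at the mild cost of invoking real semialgebraic dimension theory (fiber-dimension bounds for the incidence set, and the fact that a proper algebraic subset of the real projective plane has dimension at most $1$) — standard facts, but they should be cited or justified since the ground field is $\mathbb{R}$; the paper's argument stays within elementary linear algebra and explicit perturbations. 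A small remark: in your Case (I) the conclusion $n=4$ is not actually needed — linear independence of $\{u,v_1,v_2,v_3\}$ alone already produces the $\mathrm{ad}$-generating element with $u(X)\neq 0$.
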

Note that if $\dim\, \mathfrak{m}_{-2}\leq 2$ then the item (2) of the previous proposition holds automatically. Besides, obviously item (2) is a necessary condition for $\mathrm{ad}-$ surjectivity.

\begin{Coro}
\label{codim3cor}
The only non-$\mathrm{ad}$-surjective step $2$ fundamental graded Lie algebra with $\mathfrak{m}_{-2}\leq 3$ is the truncated step $2$ free Lie algebra with $3$ generators.  
\end{Coro}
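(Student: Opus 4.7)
The plan is to combine Proposition \ref{codim3} with the direct-sum behaviour of ad-surjectivity from Remark \ref{sum_ad_surj}, and then run a short dimension count bounded by the free step $2$ Lie algebra.

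First I would peel off any central part sitting in degree $-1$. Setting $Z_0 := \mathfrak{m}_{-1} \cap Z(\mathfrak{m})$ and choosing any complement $V$ of $Z_0$ in $\mathfrak{m}_{-1}$, one verifies that $\mathfrak{m}' := V \oplus [V,V]$ is a step $2$ fundamental graded Lie subalgebra, that $\mathfrak{m} = Z_0 \oplus \mathfrak{m}'$ as graded Lie algebras (using $[Z_0, V] = 0$ and the step $2$ assumption $[\mathfrak{m}_{-1}, \mathfrak{m}_{-2}] = 0$), and that $\mathfrak{m}'_{-1} \cap Z(\mathfrak{m}') = 0$, since any $v \in V$ central in $\mathfrak{m}'$ would also be central in $\mathfrak{m}$, forcing $v \in Z_0 \cap V = 0$. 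Since the abelian summand $Z_0$ is trivially ad-surjective, Remark \ref{sum_ad_surj} forces $\mathfrak{m}'$ to inherit non-ad-surjectivity from $\mathfrak{m}$.

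Next I would apply Proposition \ref{codim3} to $\mathfrak{m}'$. Its conditions (1) and (3) both hold, so condition (2) must fail, i.e.\ $\dim \mathfrak{m}'_{-1} \leq \dim \mathfrak{m}'_{-2} \leq 3$. Writing $n := \dim \mathfrak{m}'_{-1}$ and $k := \dim \mathfrak{m}'_{-2}$, antisymmetry of the bracket together with fundamentality gives the extra constraint $k \leq \binom{n}{2}$. Enumerating pairs $(n,k)$ with $n \leq k \leq 3$ and $k \leq \binom{n}{2}$, the cases $n \leq 2$ and $n \geq 4$ are immediately excluded, leaving only $n = k = 3$. In this remaining case the surjective bracket map $\Lambda^2 \mathfrak{m}'_{-1} \to \mathfrak{m}'_{-2}$ goes between three-dimensional spaces and is therefore an isomorphism, which is precisely the defining property of the truncated step $2$ free Lie algebra on three generators.

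There is no serious obstacle: the only point requiring care is the reduction step, namely checking that $V \oplus [V,V]$ is a graded Lie direct summand with trivial degree $-1$ center. The corollary's wording implicitly identifies $\mathfrak{m}$ with its non-abelian factor $\mathfrak{m}'$; strictly, the argument yields that $\mathfrak{m}$ is the truncated step $2$ free Lie algebra on three generators, possibly extended by an abelian graded direct summand in degree $-1$ (which contributes nothing to ad-surjectivity by Remark \ref{sum_ad_surj}).
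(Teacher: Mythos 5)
Your proposal is correct and follows essentially the route the paper intends: Corollary \ref{codim3cor} is presented as a direct consequence of Proposition \ref{codim3}, and your argument (split off the degree $-1$ central part $Z_0$, note via Remark \ref{sum_ad_surj} that non-$\mathrm{ad}$-surjectivity descends to $\mathfrak{m}'=V\oplus[V,V]$, then conclude from the failure of condition (2) that $\dim\mathfrak{m}'_{-1}\le\dim\mathfrak{m}'_{-2}\le 3\le\binom{\dim\mathfrak{m}'_{-1}}{2}$, forcing both dimensions to equal $3$ and the bracket $\Lambda^2\mathfrak{m}'_{-1}\to\mathfrak{m}'_{-2}$ to be an isomorphism) is precisely the intended derivation, carried out with more care than the paper records. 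Your closing caveat, that the literal statement should allow an additional abelian direct summand in degree $-1$ coming from the center, is a fair observation about the wording of the corollary rather than a gap in your proof.
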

Note that Proposition \ref{codim3} does not hold if one drops item (1), see Appendix \ref{appendix A}, Example \ref{counter_example}, for a family of counter-examples with $\dim \mathfrak{m}_{-2}=4$ and $\dim \mathfrak{m}_{-1}=5$. These counter-examples are semi-direct sums of the truncated step $2$ free Lie algebras with three generators and the $3$-dimensional Heisenberg algebra.

Nevertheless,  following \cite[Section 8]{jmz2019}, given integers $m>0$ and $d\geq 0$, if we denote by $\mathrm{GLNA} (m, m+d)$  the set of all  fundamental graded nilpotent Lie algebras $\mathfrak m$ of step not greater than $2$ satisfying 
\begin{equation}
\label{dim-1-2}
\dim\, \mathfrak m_{-1}=m,\quad   \dim\,\mathfrak m_{-2}=d,
\end{equation}
we have the following genericity results:
\begin{Lemma}

\label{gen_lemma}
If  $m>d$, the subset of all  
$\mathrm{ad}$-surjective graded nilpotent Lie algebras belonging to $\mathrm{GLNA} (m, m+d)$ 
is generic in $\mathrm{GLNA} (m, m+d)$.

\end{Lemma}
\begin{proof}
Indeed,  for such Lie algebras, the Lie algebra structure is encoded   by the Levi operator  $\mathcal L_q\in {\rm Hom}\bigl( \bigwedge^2 \mathfrak m_{-1}, \mathfrak m_{-2}\bigr)$ which  is defined as follows:
\begin{equation}
\mathcal L(X,Y)=[X,Y], \quad \forall X,Y\in \mathfrak m_{-1},
\end{equation}
and the fundamentality assumption implies that $\mathcal L$ is surjective.
Equivalently, one can consider the  dual operator  $\mathcal L^*\in {\rm Hom}\bigl((\mathfrak m_{-2})^*,  \bigwedge^2 (\mathfrak m_{-1})^*)$,
\begin{equation}
\label{A*}
\mathcal L^*(p)(X,Y)=p([X,Y])\quad X,Y\in \mathfrak m_{-1}, \,\,p\in  (\mathfrak m_{-2})^*.
\end{equation}
Here we use the natural identification 
$\Bigl(\bigwedge^2 \mathfrak m_{-1}\Bigr)^*\cong\bigwedge^2 (\mathfrak m_{-1})^*$, which in turn is naturally identified with the space of skew-symmetric bilinear forms on $\mathfrak m_{-1}.$ Note that, again from the surjectivity of $\mathcal L$, its dual $\mathcal L^*$  is injective and is described by its image, which is a $d$-dimensional space. So, 
the space of  
all fundamental graded nilpotent Lie algebras of step not greater than $2$  satisfying \eqref{dim-1-2} is isomorphic to the Grassmannian of $d$-dimensional subspaces in the space of skew-symmetric forms of an  $m$-dimensional vector space, modulo the natural action of the General Linear group on this space. In particular, the latter Grassmannian is a connected algebraic variety and the subset of $\mathrm{ad}$-surjective graded nilpotent Lie algebras of step not greater than $2$  satisfying \eqref{dim-1-2} with $m>d$ corresponds to a non-empty Zariski open subset of it, therefore it is generic.
\end{proof}

\begin{rk} \emph{By \cite[Proposition 8.1]{jmz2019} the subset of indecomposable graded nilpotent Lie algebras in $\mathrm{GLNA}(m, m+d)$ is generic $\mathrm{GLNA}(m, m+d)$ for all pairs $(m, d)$ with the exception of the following three cases:
\begin{enumerate}
    \item $d=0$, $m>1$ (Riemannian case of dimension greater than $1$;
    \item $d=1$, $m>1$ and  odd (the quasi-contact case);
    \item $d=2$, $m=4$.
\end{enumerate}
Moreover, in cases (1) and (2) all graded Lie algebras in $\mathrm{GLNA}(m, m+d)$ are decomposable, while in case (3) the set of decomposable fundamental symbols is nonempty open and corresponds to symbols for which the set of solutions of the equation $\mathcal L^*(p)\wedge \mathcal L^*(p)=0$ considered as the equation with respect to $p\in (\mathfrak m_{-2})^*$, where   $L^*(p)$ is as in \eqref{A*}, consists of two distinct (real) lines.}
\end{rk}

The main result of the present paper is the following
\begin{theorem}
\label{mainthm}
Assume that $D$ is a step 2 distribution such that its Tanaka symbol is $\mathrm {ad}$-surjective. 
 If a sub-Riemannian metric  $(M, D, g_1)$ is not affinely rigid near a point $q_0$, then it admits a product structure in a neighborhood of $q_0$.  
\end{theorem}

\begin{rk}
\label{ad_surj_for any_rem}
\emph{First note that by Theorem \ref{Tanaka_decomp_thm} under the hypothesis of the previous theorem the Tanaka symbol of $D$ must be decomposable and by the second sentence of Remark \ref{sum_ad_surj} all components of this decomposition are $\mathrm{ad}$-surjective. 
Second, by Remark \ref{unique_decomp}, if such decomposition consist of indecomposable components only,  the number of these components is independent of the decomposition. Let us denote this number by $\hat k$. 
Then the sub-Riemannian metric in Theorem \ref{mainthm} is a product of at least two and at most $\hat k$ sub-Riemannian structures each of which is affinely rigid (in the neighborhood of the projection of $q_0$ to the corresponding manifold).
}
\end{rk}
The rest of the paper is devoted to the proof of Theorem \ref{mainthm}. This theorem confirms the Conjecture \ref{conjecture} for sub-Riemannian metrics on step 2 distributions with $\mathrm{ad}$-surjective Tanaka symbol. 
As a direct consequence of Theorem \ref{mainthm} and Corollary \ref{codim3cor}  we get the following

\begin{Coro}
Assume that $D$ is a step 2 distribution such that its Tanaka symbol is decomposed  into $\hat k\geq 2$ 
nonzero indecomposable fundamental graded Lie algebras
with degree $-2$ components of dimension not greater than $3$ and such that among them there is no truncated step $2$ free Lie algebra with $3$ generators.  If a sub-Riemannian metric  $(M, D, g_1)$ is not affinely rigid near a point $q_0$ then it admits a product of 
of at least two and at most $\hat k$ sub-Riemannian structures each of which is affinely rigid (in the neighborhood of the projection of $q_0$ to the corresponding manifold).
\end{Coro}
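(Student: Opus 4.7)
The argument is a direct composition of Corollary~\ref{codim3cor} with Theorem~\ref{mainthm}. The plan is to verify that every indecomposable component of the Tanaka symbol at $q_0$ is $\mathrm{ad}$-surjective, which puts us in exactly the setting of Theorem~\ref{mainthm}.

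First I would fix the given decomposition $\mathfrak{m}(q_0) = \bigoplus_{i=1}^{k} \mathfrak{m}^i$ into nonzero indecomposable fundamental graded Lie algebras. For each $i$, there are two cases. If $\mathfrak{m}^i$ is commutative, then $\mathfrak{m}^i_{-2} = 0$, and the indecomposability of $\mathfrak{m}^i$ as a fundamental graded Lie algebra forces $\dim \mathfrak{m}^i_{-1} = 1$ (otherwise the abelian $\mathfrak{m}^i_{-1}$ splits further as a direct sum of one-dimensional fundamental graded Lie algebras). In this case $\mathrm{ad}\,X \colon \mathfrak{m}^i_{-1} \to \mathfrak{m}^i_{-2} = 0$ is vacuously surjective for every $X \in \mathfrak{m}^i_{-1}$. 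Otherwise $\mathfrak{m}^i$ is a genuine step~$2$ fundamental graded Lie algebra, and by the hypotheses of the corollary $\dim \mathfrak{m}^i_{-2} \leq 3$ and $\mathfrak{m}^i$ is not isomorphic to the truncated step~$2$ free Lie algebra on $3$ generators. Corollary~\ref{codim3cor} then applies and produces an $\mathrm{ad}$-generating element of $\mathfrak{m}^i$.

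Having verified $\mathrm{ad}$-surjectivity for every component, the Tanaka symbol at $q_0$ is decomposed into $k \geq 2$ nonzero $\mathrm{ad}$-surjective indecomposable fundamental graded Lie algebras. I would then apply Theorem~\ref{mainthm} with $\hat{k} = k$: since $(M,D,g_1)$ is assumed not affinely rigid near $q_0$, the theorem yields a product decomposition of $(M,D,g_1)$ into between $2$ and $k$ sub-Riemannian structures, each of which is affinely rigid near the projection of $q_0$ to the corresponding factor. In particular $(M,D,g_1)$ admits such a decomposition with at least two affinely rigid factors, which is the claim of the corollary.

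No substantive obstacle is expected: the entire content has already been established in Theorem~\ref{mainthm} and Corollary~\ref{codim3cor}. The only step that requires any care in writing is the dichotomy between the commutative and the genuinely step~$2$ indecomposable components, and both cases are immediate once noted.
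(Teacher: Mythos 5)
Your argument is exactly the paper's: the corollary is stated there as a direct consequence of Theorem \ref{mainthm} and Corollary \ref{codim3cor}, which is precisely the composition you carry out, and your explicit treatment of the commutative (one-dimensional) components as trivially $\mathrm{ad}$-surjective matches the paper's convention (cf. Remark \ref{d=0_rem}). The proposal is correct.
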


The assumption of $\mathrm{ad}$-surjectivity  of the  Tanaka symbol is crucial for our proof of Theorem \ref{mainthm} because we strongly use a natural quasi-normal form for  
$\mathrm{ad}$-surjective Lie algebras, see \eqref{quasi_complete}. We hope that analogous quasi-normal forms can be found for more general graded nilpotent Lie algebras so that Conjecture \ref{conjecture} can be proved similarly for a more general class of sub-Riemannian metrics.

\section{Orbital equivalence and  fundamental algebraic system}
\label{orb_sec}

In general, there are two	types of Pontryagin  extremals for optimal control problems, \emph{normal} and \emph{abnormal} (\cite{ABB, AgrSach}): for the former, the Lagrange multiplier near the functional is not zero and for the latter, it is zero.  In particular, abnormal extremals, as unparametrized curves,  depend on the distribution $D$  only and not on a metric $g$ on it. This indicates that only normal extremals are essential for the considered problems of affine/projective equivalence (see Proposition  \ref{th:orb.to.proj}
for the precise formulation). Therefore, we give an explicit description of normal extremals only.  They  are the  integral curves of the Hamiltonian vector field  $\vec h$ on $T^*M$ corresponding to the Hamiltonian 
\begin{equation}
\label{Hamiltonian}
h(p,q)=||\, p|_{D(q)}||^2, \quad q\in M, p\in T^*_qM,
\end{equation}
and lying on a nonzero level set of $h$. Here $||\, p|_{D(q)}||$ denotes the operator norm of the functional  $p|_{D(q)}$, i.e., 

$$||\, p|_{D(q)}||=\max \{p(v): v\in D(q), g(q)(v,v)=1\}.$$
The Hamiltonian $h$ defined by \eqref{Hamiltonian} is 
called the \emph{Hamiltonian, associated with the metric $g$} or shortly the \emph{sub-Riemannian Hamiltonian}.		 
	

In \cite{jmz2019}, following \cite{Z}, the problems of projectively and affine equivalence of sub-Riemannian metric were reduced to the study of the orbital equivalence of the corresponding sub-Riemannian Hamiltonian systems for normal Pontryagin extremals of the energy minimizing problem \eqref{energy}, which in turn is reduced to the study of solvability of a special linear algebraic system with coefficients being polynomial in the fibers, called the \emph{fundamental algebraic system} (\cite [Proposition 3.10]{jmz2019}). In this section we summarize all information from \cite{jmz2019} we need for the proof of Theorem \ref{mainthm}. 



As before, fix a connected manifold $M$ and a bracket-generating equiregular distribution $D$ on $M$, and consider two sub-Riemannian metrics $g_1$ and $g_2$ on $(M, D)$. We denote by $h_1$ and $h_2$ the respective sub-Riemannian Hamiltonians of $g_1$ and $g_2$, as defined in \eqref{Hamiltonian}. Let the annihilator $D^\perp$ of $D$ in $T^*M$ be defined as follows: 
\begin{equation}
\label{perp}
D^\perp=\{(p, q)\in T^*M: p|_{D(q)}=0\}
\end{equation}
It  coincides with the zero level set of the sub-Riemannian Hamiltonian $h$ from \eqref{Hamiltonian}.
\begin{Def}
We say that $\vec h_1$ and  $\vec h_2$ are \emph{orbitally diffeomorphic}
on an open subset $V_1$ of $T^*M\backslash D^\perp$  if there exists an open subset $V_2$ of  $T^*M\backslash D^\perp$ and a diffeomorphism $\Phi: V_1 \rightarrow V_2$ such that $\Phi$ is fiber-preserving, i.e.,  $\pi (\Phi(\lambda)) = \pi (\lambda)$, and $\Phi$ sends the integral curves of $\vec h_1$ to the reparameterized integral curves of $\vec h_2$, i.e., there exists a smooth function $s=s(\lambda,t)$ with $s(\lambda,0)=0$ such that $\Phi\bigl(e^{t\vec h_1}\lambda\bigr) =e^{s\vec h_2} \bigl(\Phi(\lambda)\bigr)$ for all $\lambda \in V_1$ and $t\in \mathbb R$ for which $e^{t\vec h_1}\lambda$ is well defined. Equivalently, there exists a smooth function $\aa(\lambda)$ such that
\begin{equation}
\label{orbeq}
d\Phi\, \vec{h}_1(\lambda) = \aa(\lambda) \vec{h}_2(\Phi(\lambda)).
\end{equation}
The map $\Phi$ is called an \emph{orbital diffeomorphism} between the extremal flows of $g_1$ and  $g_2$.
\end{Def}
The reduction of projective (respectively,  affine) equivalence of sub-Riemannian metrics  to the orbital ( respectively, a special form of orbital) equivalence of the corresponding sub-Riemannian Hamiltonian systems is given by the following:

\begin{Prop}\cite[a combination of Proposition 3.4  and Theorem 2.10 there]{jmz2019}
\label{th:orb.to.proj}
Assume that the sub-Riemannian metrics $g_1$ and $g_2$ are projectively equivalent in a neighborhood  $U\subset M$ and let $\pi: T^*M\rightarrow M$ be the canonical projection.  Then, for generic point $\lambda_1 \in \pi^{-1} (U)\backslash D^\perp$,  $\vec{h}_1$ and $\vec{h}_2$ are orbitally diffeomorphic on a neighborhood $V_1$ of $\lambda_1$ in $T^*M$.
Moreover, if $g_1$ and $g_2$ are affinely equivalent in a neighborhood  $U\subset M$, then the function $\aa(\lambda)$ in~\eqref{orbeq} satisfies $\vec{h}_1(\aa)=0$. \end{Prop}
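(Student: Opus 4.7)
The plan is to build the fiber-preserving diffeomorphism $\Phi$ directly from the Pontryagin correspondence between normal extremals and their projected geodesics, using projective equivalence to identify the two Hamiltonian pictures, and then to read off \eqref{orbeq} and the affine assertion from the behavior of the reparametrization $\varphi$.

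First I would invoke the following standard consequence of the Pontryagin Maximum Principle: for a generic (\emph{strictly normal}) $g_1$-geodesic $\gamma$, i.e.\ one not arising as the projection of any abnormal extremal, the set of normal $g_1$-extremal lifts of $\gamma$ is a single orbit under positive fiberwise scaling (equivalently, affine reparametrization of $\gamma$). The collection of $\lambda_1\in T^*M\setminus D^\perp$ whose projected geodesic is strictly normal is open, and this is exactly what ``generic $\lambda_1$'' means in the statement. Given such $\lambda_1\in V_1$, set $\gamma(t)=\pi(e^{t\vec h_1}\lambda_1)$. Projective equivalence supplies a reparametrization $t=\varphi(\tau)$ (with $\varphi(0)=0$, depending smoothly on $\lambda_1$) so that $\gamma\circ\varphi$ is a $g_2$-geodesic; let $\mu(\tau)$ be the unique normal $g_2$-extremal projecting to $\gamma\circ\varphi$ and starting over $\pi(\lambda_1)$, normalized by the smooth rule $h_2(\mu(0))=h_1(\lambda_1)$. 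Define $\Phi(\lambda_1):=\mu(0)$.

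This $\Phi$ is fiber-preserving by construction and smooth by smooth dependence of extremals and of $\varphi$ on initial data. The normalization $h_2(\Phi(\lambda))=h_1(\lambda)$ is preserved by the $\vec h_1$-flow (both sides being $\vec h_1$-invariant), so for small $t$,
\[
\Phi(e^{t\vec h_1}\lambda_1)=e^{s\vec h_2}\bigl(\Phi(\lambda_1)\bigr)\quad\text{with } s=\varphi^{-1}(t),
\]
by the uniqueness up to fiber scaling of the strictly normal lift. Differentiating at $t=0$ yields \eqref{orbeq} with $\alpha(\lambda_1)=s'(0)$. In the affine case each $\varphi$ is affine, so $s'(0)$ depends on the geodesic but is independent of the point along it; equivalently, $\alpha$ is constant along each $\vec h_1$-orbit, which is the same as $\vec h_1(\alpha)=0$.

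The hard part will be the uniqueness-up-to-scaling of the normal extremal lift for strictly normal geodesics; this is a genuine PMP input that must rule out competing lifts from abnormal extremals or from multiple normal lifts, and is precisely the reason the statement restricts to a generic open subset of $T^*M\setminus D^\perp$. A subsidiary technical point is the smooth dependence of $\varphi$ (and hence of $\mu(0)$) on the initial covector $\lambda_1$, which I would handle via the implicit function theorem once strict normality is in place.
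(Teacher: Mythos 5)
This proposition is not proved in the present paper at all: it is imported verbatim from \cite{jmz2019} (a combination of Proposition 3.4 and Theorem 2.10 there), so your proposal should be measured against that proof. Your overall strategy is in fact the same one used there: match a normal $g_1$-extremal with the normal $g_2$-lift of the common underlying geodesic, fix the scaling by a normalization of the Hamiltonian, and read off $\vec h_1(\alpha)=0$ in the affine case from the affinity of the reparametrization. The lemma you flag as ``the hard part'' is indeed standard and correct: if a geodesic had two normal lifts with the same parametrization, they would have the same controls, and their fiberwise difference would be a nonvanishing curve in $D^\perp$ tangent to $\vec H_D$, i.e.\ an abnormal lift; combined with the fact that abnormal extremals depend only on $D$, strict normality forces the reparametrized curve to have a normal $g_2$-lift, unique up to scaling and time shift. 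The affine conclusion also comes out correctly once the construction is in place.

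The genuine gaps are in the two points you treat as routine. First, genericity: you assert that the strictly normal locus is open and declare this to be what ``generic'' means, but you never show it is dense (or even nonempty), and openness itself is interval-dependent --- shrinking the arc makes abnormal lifts easier to find, so what you really need is strict normality of arbitrarily short arcs, uniformly in $\lambda_1$. In \cite{jmz2019} this is exactly where \emph{ample} geodesics enter: ample covectors form an open dense subset of each fiber, and ampleness implies the required (strong) normality. Without this input the statement ``for generic $\lambda_1$'' is not established. Second, smoothness of $\Phi$: your appeal to ``smooth dependence of extremals and of $\varphi$ on initial data'' is circular, because projective equivalence only provides the \emph{existence} of some reparametrization $\varphi$, and the covector $\mu(0)$ is produced by a uniqueness argument, not by solving an equation with smooth data; a bare implicit function theorem has nothing to be applied to as set up. The cited proof obtains smoothness precisely from ampleness: along an ample geodesic the covector of the normal lift is recovered from a finite jet of the projected curve by a linear system of full rank, so $\Phi(\lambda_1)$ depends smoothly (indeed rationally in the fiber variables) on $\lambda_1$; this is the quantitative substitute for your ``subsidiary technical point'' and is the actual core of Theorem 2.10 of \cite{jmz2019}. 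If you want to run your argument with strict normality alone, you must at least show that the multiplier solving $\mu\circ dE_{u}=\partial_u J_2$ can be extracted from a fixed finite-dimensional subspace of variations on which the endpoint differential stays surjective, locally uniformly in $\lambda_1$ --- none of which is in your sketch.
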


Further, the differential equation \eqref{orbeq} can be written (\cite[Lemma 3.8]{jmz2019}) and transformed to the algebraic system via a sequence of prolongations  (\cite[Proposition 3.9]{jmz2019}) in a special moving frame adapted to the sub-Riemannian structures $g_1$ and $g_2$. For this, first, we need the following

\begin{Def}
The \emph{transition operator} at a point $q \in M$ of the pair of metrics $(g_1, g_2)$ is the linear operator $S_q :D (q) \rightarrow D(q)$ such that 
\begin{equation}
\label{transition_op_eq}
g_2(q)(v_1, v_2)=g_1(q)(S_q v_1, v_2),\quad  \forall v_1, v_2 \in D(q).
\end{equation}
\end{Def}
Obviously $S_{q}$ is a positive $g_1$-self-adjoint operator and its eigenvalues $\alpha^2_1(q)$, \dots, $\alpha^2_m(q)$ are positive real numbers (we choose $\alpha_1(q), \dots, \alpha_m(q)$ as positive numbers as well). The field $S$ of transition operators is a $(1,1)$-tensor field that will be called the \emph{transition tensor}. 

The important simplification in the case of  the affine equivalence compared to the projective equivalence is given in the following

\begin{Prop} \cite[Propostion 4.7]{jmz2019}
\label{le:constant_alphai}
If two sub-Riemannian metrics $g_1, g_2$ on $(M, D)$ are affinely equivalent on an open connected subset $U \subset M$, then all the eigenvalues $\alpha_1^2, \dots,\alpha_m^2$ of the transition operator are constant.
\end{Prop}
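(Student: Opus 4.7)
\emph{Step 1: a quadratic first integral.} If $\gamma(t)$ is a $g_1$-normal geodesic with cotangent lift $\lambda(t)=(p(t),\gamma(t))$, the Pontryagin Maximum Principle yields $\dot\gamma(t)=g_1(\gamma(t))^{-1}\bigl(p(t)|_{D(\gamma(t))}\bigr)$. By affine equivalence the reparameterized curve $s\mapsto\gamma\bigl(\tfrac{s-b}{a}\bigr)$ is a $g_2$-geodesic, whose $g_2$-speed is constant in $s$; since $a$ is constant along $\gamma$, the quantity $g_2(\dot\gamma,\dot\gamma)=g_1(S\dot\gamma,\dot\gamma)$ is then constant in $t$ along $\gamma$. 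Equivalently, the fiberwise-quadratic function $F(p,q):=\tfrac12\,g_1(q)(S_q v_p,v_p)$, where $v_p:=g_1(q)^{-1}(p|_{D(q)})$, is a first integral of the $g_1$-Hamiltonian flow: $\{h_1,F\}\equiv 0$ on $T^*M\setminus D^\perp$.

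\emph{Step 2: polynomial identity.} On the dense open subset of $U$ where $S$ has simple eigenvalues, choose a local $g_1$-orthonormal frame $e_1,\dots,e_m$ of $D$ of unit eigenvectors of $S$ (so $Se_i=\alpha_i^2 e_i$), extend to a local frame $e_1,\dots,e_n$ of $TM$ with $[e_i,e_j]=\sum_s c_{ij}^s e_s$, and set $u_i:=p(e_i)$. Then $h_1=\tfrac12\sum_{i\le m}u_i^2$ and $F=\tfrac12\sum_{i\le m}\alpha_i^2(q)\,u_i^2$. Using the canonical Poisson brackets $\{u_i,u_j\}=-\sum_s c_{ij}^s u_s$ and $\{u_i,f\}=e_i(f)$ for $f\in C^\infty(M)$, the condition $\{h_1,F\}\equiv 0$ becomes a cubic polynomial identity in $u_1,\dots,u_n$ whose monomial coefficients must all vanish. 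Reading off the coefficient of $u_a^3$ gives $e_a(\alpha_a^2)=0$; the coefficient of $u_iu_ju_s$ with $s>m$ gives $(\alpha_j^2-\alpha_i^2)\,c_{ij}^s=0$; and the coefficient of $u_a^2 u_c$ with $a\neq c\le m$ yields the partial relation $e_c(\alpha_a^2)=2(\alpha_c^2-\alpha_a^2)\,c_{ac}^a$.

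\emph{Step 3: prolongation and bracket-generation.} The last relation does not on its own force $e_c(\alpha_a^2)=0$, because $c_{ac}^a$ is a priori unconstrained. The plan for closing the gap is to invoke the prolongation procedure of \cite[Proposition~3.9]{jmz2019}, whose output is the ``fundamental algebraic system'' referred to at the beginning of Section 3, applied to the full orbital-equivalence equation $d\Phi\,\vec h_1=\alpha\,\vec h_2\circ\Phi$ together with the affine constraint $\vec h_1(\alpha)=0$; the additional algebraic consequences kill $c_{ac}^a$ whenever $\alpha_a^2\neq\alpha_c^2$ and hence force $e_c(\alpha_a^2)=0$ in all cases. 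Once $X(\alpha_i^2)=0$ is established for every $X\in\Gamma(D)$, the commutator identity $[X_1,X_2](\alpha_i^2)=X_1\bigl(X_2(\alpha_i^2)\bigr)-X_2\bigl(X_1(\alpha_i^2)\bigr)=0$ propagates the vanishing to every iterated bracket of sections of $D$, and by bracket-generation to all of $\Gamma(TM)$. Thus each $\alpha_i^2$ is locally constant on the open dense simple-eigenvalue stratum, and by continuity together with connectedness of $U$, constant on $U$.

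The step I expect to be the main obstacle is the off-diagonal elimination in Step 3: the single Poisson bracket identity $\{h_1,F\}=0$ is not sharp enough by itself, and one must genuinely exploit the full orbital-equivalence equation $d\Phi\,\vec h_1=\alpha\,\vec h_2\circ\Phi$ together with the affine condition $\vec h_1(\alpha)=0$, rather than only its scalar shadow $F$. This is precisely where the distinction between projective and affine equivalence becomes operationally decisive, and where the prolongation framework of \cite{jmz2019} carries the weight of the proof.
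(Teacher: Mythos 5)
The paper does not actually prove this proposition; it is imported verbatim from \cite{jmz2019} (Proposition 4.7 there), so the only comparison available is with the orbital--diffeomorphism machinery that both you and the paper quote. Your Steps 1--2 are essentially a correct rendering of what the affine condition gives: since $h_1$ is a first integral of $\vec{h}_1$, the condition $\vec{h}_1(\aa)=0$ of Proposition \ref{th:orb.to.proj} is equivalent to $F=\aa^2h_1=\tfrac12\sum_i\alpha_i^2u_i^2$ being a first integral, and expanding $\vec{h}_1(F)=0$ monomial by monomial yields exactly your three families of identities; note that your third identity is precisely the relation $X_i\bigl(\alpha_j^2/\alpha_i^2\bigr)=2c^j_{ji}\bigl(1-\alpha_j^2/\alpha_i^2\bigr)$ of \cite{Z} quoted in the footnote to Proposition \ref{first divi}. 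Two small repairs are needed there: the set where $S$ has \emph{simple} eigenvalues may be empty (any eigenspace $D_i$ of rank $\geq 2$ kills it), so you must work instead on the open dense set where the multiplicities are locally constant and a smooth orthonormal eigenframe exists; and the ``constant $g_2$-speed'' argument is not automatic because a variational $g_2$-geodesic may be the projection of an abnormal extremal --- the clean route to $\{h_1,F\}=0$ is through Proposition \ref{th:orb.to.proj} itself.

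The genuine gap is Step 3, and it sits exactly where the proof has to live. The identity $e_c(\alpha_a^2)=2(\alpha_c^2-\alpha_a^2)c^a_{ac}$ gives nothing unless one proves $c^a_{ac}=0$ for indices in different eigenvalue groups, and you only announce a ``plan'' to extract this from the prolonged system, without exhibiting which layers, minors or monomials of the fundamental algebraic system \eqref{A.phi.B} produce it. This is not a routine omission: a quadratic first integral alone is compatible with non-constant eigenvalues (Liouville-type, projectively equivalent pairs), so genuine use of the orbital system is indispensable; moreover your proposed order of logic is the reverse of the one used in this paper and in \cite{Z}, where $c^j_{ji}=0$ (item (1) of Proposition \ref{first divi}) is \emph{deduced from} the already-established constancy of the $\alpha_i^2$ via the very relation you derived, so you cannot import that vanishing without circularity --- an independent derivation is the actual content of the cited Proposition 4.7 and is missing here. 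Finally, even granting $d(\alpha_i^2)=0$ on the open dense stratum, ``locally constant on a dense open set plus continuity'' does not by itself yield constancy on connected $U$ (Cantor-type functions are a counterexample); this is fixable, e.g.\ by applying the vanishing of derivatives to the globally smooth elementary symmetric functions of the eigenvalues of $S$, but as written it is an additional loose end.
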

This proposition implies that the number of the distinct eigenvalues $k(q)$ of the transition operators $S_q$ is independent of $q\in U$, i.e., $k(q)\equiv k$ on $U$ for some positive integer $k$. Also, there is $k$ distributions $D_i$ such that  
\begin{equation}
\label{eigendecomp}
D(q)=\displaystyle{\bigoplus_{i=1}^k D_i(q)}
\end{equation}
is the eigenspace decomposition of $D(q)$ with respect to the eigenspaces of the operator $S_q$. 
Now let
\begin{equation}
\label{weakdecompsymb_1} 
\mathfrak{m}_{-1}^i(q)=D_i(q), \quad  \mathfrak{m}_{-j}^i(q)=(D_i)^j(q)/\left((D_i)^j(q)\cap D^{j-1}(q)\right), \,\,\forall j>1. \footnote{ Since $(D_i)^j\subset D^j$, the space $\mathfrak{m}_{-j}^i$ is a subspace of $\mathfrak{m}_{-j}$.}
\end{equation}
Set 
\begin{equation}
\label{weakdecompsymb_2}
\mathfrak{m}^i(q)=\bigoplus_{j=1}^\mu  \mathfrak{m}_{-j}^i(q).
\end{equation}
By construction $\mathfrak{m}^i$, $i=1, \ldots k$, are fundamental graded Lie algebras.
\begin{rk}
\emph{Note that in general $\mathfrak m^i(q)$ is not equal/isomorphic to the Tanaka symbol of the distribution $D_i$ at $q$, as when defining the components  $
\mathfrak m_{-j}^i(q)$ with $j>1$ we also make the quotient by the powers of $D$. In fact, the proof that $\mathfrak m^i(q)$ is isomorphic to the Tanaka symbol of the distribution $D_i$ under the assumption of affine nonrigidity is one of the main steps in the proof of Theorem \ref{mainthm}.}
\end{rk}

\begin{Prop}\cite[Theorem 6.2 and Theorem 7.1]{jmz2019} 
\label{Tanaka_decomp_theorem}
If sub-Riemannian metrics $g_1, g_2$ are affinely  equivalent and not constantly proportional to each other in a connected open set $U$, then for every $q\in U$ the Tanaka symbol 
$\mathfrak m(q)$ of the distribution $D$ at $q$ is the direct sum  of the fundamental graded Lie algebras $\mathfrak{m}^i(q)$, $i=1, \ldots k$ defined by \eqref{weakdecompsymb_1} and \eqref{weakdecompsymb_2}, i.e.,
\begin{equation}
 \label{Tanaka_decomposition}
 \mathfrak m(q)=\bigoplus_{i=1}^k \mathfrak m^i(q),
\end{equation}
as the direct sum of Lie algebras.
\end{Prop}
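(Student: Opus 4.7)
The plan is to use the orbital equivalence formalism of \cite{jmz2019} to turn affine equivalence into algebraic constraints on the bracket structure at $q$, and then assemble these into the decomposition \eqref{Tanaka_decomposition}. By Proposition \ref{le:constant_alphai} the eigenvalues $\alpha_1^2,\dots,\alpha_k^2$ of $S$ are locally constant, so the eigenspaces $D_i(q)$ assemble into smooth $g_1$-orthogonal subdistributions $D_i$ on $U$ and yield $D=\bigoplus_{i=1}^k D_i$. I would choose a $g_1$-orthonormal frame $X_1,\dots,X_m$ of $D$ adapted to this decomposition, so that each $X_\ell$ is a section of a single $D_{i(\ell)}$. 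In the dual fiber coordinates $u_\ell := H_{X_\ell}$ on $T^*M$ the Hamiltonians take the diagonal form $h_1 = \tfrac12 \sum_\ell u_\ell^2$ and $h_2 = \tfrac12 \sum_\ell \alpha_{i(\ell)}^2 u_\ell^2$, and Poisson brackets of the $u_\ell$ encode the Lie brackets of the $X_\ell$ together with the iterated brackets spanning the higher layers of the filtration.

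Next, apply Proposition \ref{th:orb.to.proj} at a generic point to obtain a fiber-preserving orbital diffeomorphism $\Phi$ intertwining $\vec h_1$ and $\vec h_2$, with the function $\alpha$ in \eqref{orbeq} being a first integral of $\vec h_1$. Following the prolongation procedure of \cite[Propositions 3.9--3.10]{jmz2019}, differentiating \eqref{orbeq} iteratively along $\vec h_1$ and solving for the Taylor coefficients of $\Phi$ in the fiber directions produces the \emph{fundamental algebraic system}: a family of polynomial identities in the fiber variables $u_\ell$ whose coefficients are built from the eigenvalues $\alpha_i^2$ and from the structure functions $c^r_{\ell s}$ of the chosen frame (expressing $[X_\ell, X_s]$ modulo the relevant pieces of the weak derived flag), together with analogous higher-order structure functions associated with deeper commutators.

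The decisive step is then to exploit the distinctness of the eigenvalues. Since the identities must hold for every value of the $u_\ell$, the coefficient of each monomial in the $u_\ell$ vanishes. Under the weight grading $u_\ell \mapsto \alpha_{i(\ell)}^2 u_\ell$, monomials arising from different eigenspace blocks carry distinct weights, and this rigidity forces cross-block structure functions to vanish. At the lowest level this amounts to $c^r_{\ell s}=0$ whenever $X_\ell$ and $X_s$ lie in different eigenspaces and $X_r\in D$, i.e.\ $[D_i, D_j] \subseteq D$, equivalently $[\mathfrak{m}^i_{-1}(q), \mathfrak{m}^j_{-1}(q)] = 0$ inside $\mathfrak{m}_{-2}(q)$ for $i\neq j$. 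Induction on the filtration depth, applied to the prolonged identities, then extends this to $[\mathfrak{m}^i(q),\mathfrak{m}^j(q)] = 0$ for all $i \neq j$.

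Finally, I would verify the vector-space equality $\mathfrak{m}(q) = \bigoplus_i \mathfrak{m}^i(q)$: since $\mathfrak{m}(q)$ is generated by $\mathfrak{m}_{-1}(q) = \bigoplus_i \mathfrak{m}^i_{-1}(q)$ and the cross brackets vanish, every iterated bracket reduces to one entirely inside a single $\mathfrak{m}^i(q)$, and an induction on the filtration level using the definitions \eqref{weakdecompsymb_1}--\eqref{weakdecompsymb_2} yields both spanning and directness. Combined with the bracket vanishing, this gives \eqref{Tanaka_decomposition} as graded Lie algebras. The principal obstacle is the third step: without the rigidity of the fundamental algebraic system together with the strict distinctness of the $\alpha_i^2$, one only obtains that $S$ interacts nicely with some bracket-derived operator on $D$, which is much weaker than the full Lie-algebraic splitting; pushing through the bookkeeping that isolates the cross-block coefficients is the nontrivial content of Theorems~6.2 and~7.1 of \cite{jmz2019}.
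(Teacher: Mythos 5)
There is a genuine gap, and it sits exactly where you locate it yourself in your last sentence. Note first that the paper does not prove Proposition \ref{Tanaka_decomp_theorem} at all: it is imported verbatim from \cite{jmz2019} (Theorems 6.2 and 7.1 there), so the benchmark is that cited proof, whose general framework (constancy of the eigenvalues, adapted frame, orbital diffeomorphism via Proposition \ref{th:orb.to.proj}, prolongation to the fundamental algebraic system \eqref{A.phi.B}) your outline correctly reproduces. What is missing is the actual content. Your ``decisive step'' --- that a weight grading $u_\ell\mapsto\alpha_{i(\ell)}^2u_\ell$ makes monomials from different eigenspace blocks carry distinct weights and thereby kills all cross-block structure functions --- is not an argument: the eigenvalues are only assumed pairwise distinct, so nothing rules out coincidences such as $\alpha_i^2\alpha_j^2=\alpha_r^2\alpha_s^2$, and, more importantly, the relations that do come cheaply from the first-order analysis (essentially items (1)--(3) of Proposition \ref{first divi}) constrain only the structure functions $c^r_{\ell s}$ with $r\leq m$, i.e.\ the components of $[X_\ell,X_s]$ \emph{inside} $D$. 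What the decomposition \eqref{Tanaka_decomposition} requires is the vanishing of the components transversal to $D$ (and, beyond step $2$, to the higher $D^j$), i.e.\ $[D_i,D_j]\subseteq D$ for $i\neq j$ --- so your parenthetical ``$c^r_{\ell s}=0$\dots and $X_r\in D$'' also has the index condition inverted; it must be $r>m$. Extracting these transversal vanishings from the higher layers of \eqref{A.phi.B} is precisely the hard part of Theorems 6.2 and 7.1 of \cite{jmz2019}, and the present paper itself illustrates how delicate such coefficient extraction is even in the restricted step-$2$, $\mathrm{ad}$-surjective setting (Lemmas \ref{lemma step 1}, \ref{lemma step 1 k com}, \ref{lemma p2} are long analyses of specific minors and monomials).

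A second, independent gap is in your final step: from $[\mathfrak{m}^i_{-1}(q),\mathfrak{m}^j_{-1}(q)]=0$ for $i\neq j$ and the fact that $\mathfrak m(q)$ is generated by $\mathfrak m_{-1}(q)$ you do get that the images of the $(D_i)^2(q)$ \emph{span} $\mathfrak m_{-2}(q)$, but the \emph{directness} of the sum $\bigoplus_i\mathfrak m^i_{-2}(q)$ (and of the analogous sums in lower degrees for $\mu>2$) is an additional nontrivial claim: a priori two blocks could produce overlapping directions in $D^2(q)/D(q)$ even when their mutual brackets lie in $D$. This directness is again part of the content of the cited theorems and does not follow formally from generation plus vanishing of cross brackets, so the closing ``induction yields both spanning and directness'' needs a genuine argument, not just the definitions \eqref{weakdecompsymb_1}--\eqref{weakdecompsymb_2}.
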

\medskip

Further, in a neighborhood $U_1$ of any point $q_0\in U$ we can choose a $g_1$-orthonormal local  frame $X_1, \dots, X_m$ of $D$ whose values at any $q \in U_1$ diagonalizes $S_q$, i.e., $X_i(q)$ is an eigenvectors of $S_q$ associated with the eigenvalues  $\alpha^2_i(q)$, $i=1, \ldots m$. Note that $\frac{1}{\alpha_1}X_1, \dots, \frac{1}{\alpha_m}X_m$ form a $g_2$-orthonormal frame of $D$.  We then complete $X_1, \dots, X_m$ into a frame $\{X_1, \dots, X_n\}$ of $TM$ adapted to the distribution $D$ near $q_0$, i..e such that for every positive integer $j$ this frame contain a local frame of $D^j$. We call such a set of vector fields $\{X_1, \dots, X_n\}$  a \emph{(local) frame adapted to the (ordered) pair of metrics} $(g_1, g_2)$.  
The \emph{structure coefficients} of the frame $\{X_1, \dots, X_n\}$ are the real-valued functions $c^k_{ij}$, $i,j,k \in \{1,\dots,n\}$ defined near $q$ by
\begin{equation}
\label{struct_function}
[X_i,X_j] = \sum_{k=1}^n c^k_{ij} X_k.
\end{equation}
Let $u = (u_1, \dots, u_n)$ be the coordinates on the fibers $T_q^*M$  induced by this frame, i.e., 
\begin{equation}
\label{ui}
u_i(q,p) =  p \left(X_i(q)\right).
\end{equation}
These coordinates in turn induce a basis $\partial_{u_1}, \dots, \partial_{u_n}$ of $T_{\lambda}(T^*_qM)$ for any $\lambda \in \pi^{-1}(q)$. For $i = 1, \dots, n$, we define the lift $Y_i$ of $X_i$ as the (local) vector field on $T^*M$ such that $\pi_* Y_i = X_i$ and $du_j (Y_i) = 0 \ \ \forall 1 \leq j \leq n$. The family of vector fields $\{ Y_1, \dots, Y_n, \partial_{u_1}, \dots, \partial_{u_n}\}$ obtained in this way is called a \emph{frame of $T(T^*M)$ adapted at} $q_0$. By a standard calculation, we obtain the expression for the sub-Riemannian Hamiltonian $h_1$ of the metric $g_1$ and the corresponding  Hamiltonian vector field $\vec h_1$:
\begin{align}
~& h_1 = \frac{1}{2}\sum_{i=1}^{m} u_i^2 \label{Ham_h1}\\
~& \vec{h}_1 =\sum_{i=1}^{m} u_i\vec u_i = \sum_{i=1}^{m} u_i Y_i + \sum_{i=1}^{m}\sum_{j,k=1}^{n} c^{k}_{ij} u_i u_k \partial_{u_j}.\label{eq:vech}
\end{align}

Indeed,  to prove \eqref{eq:vech},  recall that if for a vector field $Z$ in $M$, we denote 
$$H_Z(p, q)=p(Z(q)), \quad q, p\in T_q^*M,$$ then for  any two vector fields $Z_1$ and $Z_2$ on $M$ 
we have the following identities 
\begin{equation}
\label{poison1}
\overrightarrow {H_{Z_1}} (H_{Z_2})= d\, H_{Z_2}\bigl(\overrightarrow {H_{Z_1}}\bigr) =H_{[Z_1, Z_2]}.
\end{equation}
From this and \eqref{struct_function} it follows immediately that \begin{equation}
\label{Ham_lift}
  \begin{split}  \vec{u_i} = Y_i + \sum ^{n}_{j=1}\vec{u_i}(u_j)\partial_{u_j}
=  Y_i + \sum ^{n}_{j=1}\sum ^{n}_{k=1}c_{ij}^{k}u_k\partial_{u_j}, 
\end{split}
\end{equation}
which immediately implies \eqref{eq:vech}.

Assume now that $\vec{h}_1$ and $\vec{h}_2$ are orbitally diffeomorphic near $\lambda_0\in H_1\cap \pi^{-1}(q_0)$ and let $\Phi$ be the corresponding orbital diffeomorphism. 
Let us denote by $\Phi_i, \ i= 1, \dots, n,$ the coordinates $u_i$ of $\Phi$ on the fiber, i.e.,\ $u\circ \Phi(\lambda) = (\Phi_1(\lambda), \Phi_2(\lambda), \dots, \Phi_n(\lambda))$. Then first it is easy to see \cite[Lemma 1]{Z} that the function $\alpha$ from \eqref{orbeq} 
 satisfies
 \begin{equation}
 \label{eq:2}
        \alpha=\sqrt{\frac{\sum_{i=1}^{m}\alpha_i^2 u_i^2 }{\sum_{i=1}^m u_i^2 }}
    \end{equation} 
and     
    \begin{equation}
        \Phi_k = \frac{\alpha_k^2 u_k}{\alpha}, \forall 1\leq k\leq m.
    \end{equation} 
In \cite{jmz2019} in order to find the equations for the rest of the components $\Phi_{m+1}, \ldots \Phi_n$ of $\Phi$   we first plugged into \eqref{orbeq} the expression  \eqref{eq:vech} for  $\vec h_1$ and similar expression for $\vec h_2$ and then we made the ``prolongation"  of the resulting differential equation by recursively differentiating it in the direction of $\vec h_1$ and replacing the derivatives of $\Phi_i$'s in the direction of $\vec h_1$   by their expressions from the first step. The resulting system of algebraic equation for  $\Phi_{m+1}, \ldots \Phi_n$  is summarized in the following  
\begin{Prop}\footnote{Since in the present paper we mainly work with the affine equivalence only, for which $\alpha_i$'s are constant and $\vec h_1(\alpha)=0$, the expressions in \eqref{RJ} and \eqref{term.of.b.rec.form} are significantly simpler than in \cite{jmz2019}, where the more general case of the projective equivalence is considered.}\cite[ a combination of Proposition 3.4, Proposition 4.3, Proposition 3.10 applied to the case of  affine equivalence] {jmz2019}
\label{fda}
Assume that the sub-Riemannian metrics $g_1$ and $g_2$ are projectively equivalent in a neighborhood  $U\subset M$ and let $\Phi$ be the corresponding orbital diffeomorphism between the normal extremal flows of $g_1$ and $g_2$ with coordinates $(\Phi_1, \dots, \Phi_n)$. Set $$\widetilde{\Phi}=\alpha(\Phi_{m+1}, \dots, \Phi_n).$$ Let also
\begin{equation}
\label{qjk}
q_{jk} = \sum_{i=1}^m c_{ij}^k u_i
\end{equation}
and
\begin{equation}
\label{RJ}
R_j =
\alpha_j^2 \vec{h}_1(u_j)  
 - \sum_{1 \leq i,k \leq m} c^k_{ij} \alpha_k^2 u_i u_k.
\end{equation}
Then $\widetilde{\Phi}$ satisfies a linear system of equations,
	 \begin{equation}
	 \label{A.phi.B}
	A \widetilde{\Phi} = b,
	 \end{equation}
where $A$ is a matrix with $(n-m)$ columns and an infinite number of rows, and $b$ is a column vector with an infinite number of rows. These infinite matrices can be decomposed in layers of $m$ rows each as
\begin{equation}
\label{eq:A_and_b}
		A = \left(
		 \begin{array}{c}
		A^1 \\
		A^2 \\
		 \vdots \\
		A^{s} \\
		 \vdots \\
		 \end{array}
		 \right)
		 \qquad \hbox{and}
		 \qquad b= \left(
		 \begin{array}{c}
		b^1 \\
		b^2 \\
		 \vdots \\
		b^{s} \\
		 \vdots \\
		 \end{array}
		 \right),
\end{equation}
where the coefficients $a^s_{jk}$ of the $(m \times (n-m))$ matrix $A^s$, $s \in \N$,  are defined by induction as
		 \begin{equation}
		 \label{elem.A}
		 \left\{
		 \begin{aligned}
		& a^1_{j,k} = q_{jk}, \qquad  &1 \leq j \leq m, \ m < k  \leq n,\\
		& a^{s+1}_{j, k} = \vec{h}_1(a^s_{j,k}) + \sum_{l = m+1}^{n}a^s_{j,l} q_{l k}, \qquad  &1 \leq j \leq m, \ m < k  \leq n,
		 \end{aligned}
		 \right.
		 \end{equation}
(note that the columns of $A$ are numbered from $m+1$ to $n$ according to the indices of $\widetilde{\Phi}$) and the coefficients $b^{s}_j$, $1 \leq j \leq m$, of the vector $b^s \in \R^m$ are defined by
\begin{equation}
		 \label{term.of.b.rec.form}
		 \left\{
\begin{aligned}
		& b^{1}_{j} = \ R_j, \\
		& b^{s+1}_{j} = \ \vec{h}_1(b^s_{j}) -\sum_{k = m+1}^{n} a^s_{j,k} \sum_{i=1}^{m} u_i 
		\alpha_i^2 q_{ki} 
		\\
		 \end{aligned}
		 \right. .
\end{equation}
\end{Prop}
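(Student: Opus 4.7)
The plan is to start from the orbital equivalence identity~\eqref{orbeq}, expand both sides in the adapted frame of Proposition~\ref{fda}, and then build the system~\eqref{A.phi.B} layer by layer through successive differentiation along $\vec h_1$ (prolongation). First, I would write $\vec h_1$ via~\eqref{eq:vech} and express $\vec h_2$ in the $g_2$-orthonormal frame $\{\alpha_i^{-1}X_i\}_{i=1}^m$; since $\alpha_i$'s are constant in the affine case by Proposition~\ref{le:constant_alphai}, the structure constants of this rescaled frame differ from $c_{ij}^k$ only by scalar factors, with no additional horizontal derivatives of $\alpha_i$ to keep track of. Matching the horizontal components (coefficients of $Y_i$'s) of both sides of~\eqref{orbeq}, and using that $\Phi$ is fiber-preserving, recovers the explicit formulas for $\alpha$ from~\eqref{eq:2} and for $\Phi_1,\dots,\Phi_m$, reproducing~\cite[Lemma 1]{Z}.

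Next, matching the vertical components (coefficients of each $\partial_{u_j}$ for $j>m$) of~\eqref{orbeq} yields the first-level algebraic relation. After substituting $\widetilde\Phi=\alpha(\Phi_{m+1},\dots,\Phi_n)$, using $\vec h_1(\alpha)=0$ (the affine case of~\cite[Proposition~3.4]{jmz2019}), and simplifying with the known expressions for $\Phi_k$ with $k\le m$, one obtains
\begin{equation*}
\sum_{k=m+1}^{n} q_{jk}\,\widetilde\Phi_k \;=\; R_j, \qquad 1\le j\le m,
\end{equation*}
which is exactly $A^1\widetilde\Phi=b^1$ with $a^1_{jk}=q_{jk}$ and $b^1_j=R_j$. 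Here the simplification comes from the identity~\eqref{Ham_lift} and from $\alpha_i$ being constants, so the only polynomial-in-$u$ quadratic residue on the right-hand side is the explicit contribution in~\eqref{RJ}.

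The core of the proof is then the prolongation step. I differentiate the relation $A^s\widetilde\Phi=b^s$ along $\vec h_1$; each coefficient $a^s_{jk}$ and $b^s_j$ produces $\vec h_1(a^s_{jk})$ and $\vec h_1(b^s_j)$, while the derivatives $\vec h_1(\widetilde\Phi_k)$ need to be replaced by algebraic expressions. The latter replacement is precisely where $q_{lk}$ reappears: the orbital identity~\eqref{orbeq}, applied to each coordinate function $\widetilde\Phi_k$, produces $\vec h_1(\widetilde\Phi_k) = \sum_{l=m+1}^n q_{lk}\widetilde\Phi_l + (\text{terms involving } u_i\alpha_i^2 q_{ki})$ modulo lower-order algebraic combinations already captured in previous layers. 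Substituting this and collecting terms yields exactly the recursions~\eqref{elem.A} and~\eqref{term.of.b.rec.form}. Induction on $s$ then certifies the form of all subsequent layers.

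The main technical obstacle is the bookkeeping of the prolongation: one must show that at each differentiation step every occurrence of a derivative of $\widetilde\Phi$ can be eliminated using only the expressions at the previous layer, so that the new equation is purely algebraic in $\widetilde\Phi$ with polynomial-in-$u$ coefficients, and moreover that the resulting coefficients match the stated recursion exactly. In the general projective setting of~\cite{jmz2019} this involved additional terms coming from non-constant $\alpha_i$'s and from $\vec h_1(\alpha)\ne 0$; under the affine assumption those terms drop out, yielding the clean formulas of~\eqref{RJ} and~\eqref{term.of.b.rec.form}. With this simplification the inductive verification of~\eqref{elem.A}--\eqref{term.of.b.rec.form} is a direct, if careful, computation.
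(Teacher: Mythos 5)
Your proposal takes essentially the same route as the paper, which does not reprove Proposition \ref{fda} but cites \cite{jmz2019} and sketches precisely this procedure: expand \eqref{orbeq} in the adapted frame \eqref{eq:vech}, recover $\alpha$ and $\Phi_1,\dots,\Phi_m$ as in \cite[Lemma 1]{Z}, obtain the first layer, and generate the higher layers by differentiating along $\vec h_1$ and eliminating the derivatives of $\widetilde\Phi$. Two bookkeeping slips to fix when carrying it out: the first layer $\sum_{k>m}q_{jk}\widetilde\Phi_k=R_j$ comes from the $\partial_{u_j}$-components with $j\le m$ (the components with $j>m$ are exactly the relations $\vec h_1(\widetilde\Phi_k)=\sum_{l>m}q_{kl}\widetilde\Phi_l+\sum_{i\le m}\alpha_i^2u_i\,q_{ki}$ that feed the prolongation), and in that substitution the index order is $q_{kl}$, not $q_{lk}$.
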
	

\begin{Def}
\label{FAS_def}
The system \eqref{A.phi.B} with $A$ and $b$ defined recursively by \eqref{elem.A} and \eqref{term.of.b.rec.form} is called the \emph{fundamental algebraic system} for the affine equivalence of the sub-Riemannian metrics  $g_1$ and $g_2$. \footnote{ The column vector $b$ in \eqref{A.phi.B} here corresponds to $\alpha b$ in the notations of the fundamental algebraic system in \cite{jmz2019}. It is more convenient in the context of affine equivalence as all components of $b$ become polynomial in $u_j$'s, i.e., the polynomials on the fibers of $T^*M$.} The subsystem 
 \begin{equation}
	 \label{Ai.phi.Bi}
	A^i \widetilde{\Phi} = b^i
	 \end{equation}
with $A^i$ and $b^i$ as in \eqref{eq:A_and_b} is called the \emph{$i$th layer of the fundamental algebraic system \eqref{A.phi.B}.}
\end{Def}

The matrix $A$ has $n-m$ columns and infinitely many rows and $b$ is the infinite-dimensional column vector. So, the fundamental algebraic system \eqref{A.phi.B} is an over-determined linear system on $(\Phi_{m+1},\dots,\Phi_n)$, and all entries of $A$ and $b$ are polynomials \eqref{elem.A} and \eqref{term.of.b.rec.form} in $u_j$'s. Therefore, all $(n-m+1)\times (n-m+1)$ minors of the augmented matrix $[A|b]$ must be equal to zero. Since all of these minors are polynomials in $u_j$'s,  the coefficient of every monomial of these polynomials is equal to zero. It results in a huge collection of constraints on the structure coefficient $c^k_{ij}$. By discovering and analyzing the monomials with the "simplest" coefficients we were able to prove our main Theorem \ref{mainthm}. This analysis is given, for example, in  Lemmas \ref{lemma step 1}, \ref{lemma step 1 k com}, and \ref{lemma p2}. Quasi-normal forms \eqref{quasi_complete} for $\mathrm{ad}$-surjective Lie algebras were crucial for this analysis.

\section{Proof of Theorem \ref{mainthm}}

Let $(M, D, g_1)$ be a sub-Riemannian metric satisfying the assumptions of Theorem \ref{mainthm}. Assume that $g_2$ is a sub-Riemannian metric that is affinely equivalent and non-constantly proportional to $g_1$ in a neighborhood $U$ of a point $q_0$. Let $k$ be the number of distinct eigenvalues of the transition operators of the pair of metrics $(g_1, g_2)$. As mentioned after Proposition \ref{le:constant_alphai} $k$ is constant on $U$ and by   Remark \ref{ad_surj_for any_rem} we have $2\leq k\leq \hat k$. Consider the  sub-distributions $D_i$, $i\in [1:k]$ defined by \eqref{eigendecomp}, and the algebras $\mathfrak{m}^i(q)$ as in \eqref{weakdecompsymb_1}-\eqref{weakdecompsymb_2}. Hence
\begin{equation}
\label{D12}
D=D_1\oplus D_2 \oplus\ldots \oplus D_ k.
\end{equation}

By Remark \ref{sum_ad_surj} the  algebra $\mathfrak m^i$ for every $i\in [1:k]$ is ad-surjective.

\subsection{Main steps in the proof of Theorem \ref{mainthm}}

Observe that in general  
\begin{equation}
D_i(q)\subset D(q)\cap (D_i)^2(q), \quad  \forall i\in [1:\tilde k].
\label{Tanaka_neq}
\end{equation}
One  can define the canonical projection of quotient spaces 
\begin{equation}
\label{pr_i}
\mathrm{pr}_i:  (D_i)^2(q)/D_i(q)\rightarrow (D_i)^2(q)/\left(D(q)\cap (D_i)^2(q)\right).
\end{equation}
Further, given $X\in D_i(q)$,
we can  define two different operators 
\begin{equation}
\label{two_operators}
\begin{split}
~&(\mathrm{ad}\, X)_{\mathrm{mod}\, D}:D(q)\rightarrow D^2(q)/D(q),\\
~&(\mathrm{ad}\, X)_{\mathrm{mod}\, D_i}:D_i(q)\rightarrow (D_i)^2(q)/D_i(q),
\end{split}
\end{equation}
where in the first case we apply the Lie brackets with $X$ as in the Tanaka symbol of the distribution $D$ at $q$ and in the second case we apply the Lie brackets with $X$ as in the Tanaka symbol of the distribution $D_i$ at $q$.

The main steps in the proof of Theorem \ref{mainthm} are described by the following five propositions together with the final step in subsection \ref{rotation_sec} below:

\begin{Prop}
\label{step1_lemma1}
Assume that $X\in D_i(q)$ is such that the restriction of the map $(\mathrm{ad}\, X)_{\mathrm{mod}\, D}$ to $D_i(q)$ is onto $(D_i)^2(q)/\left(D(q)\cap (D_i)^2(q)\right)$.
Then the  projection $\mathrm{pr}_i$ as in \eqref{pr_i} defines the bijection between the image of the map $(\mathrm{ad}\, X)_{\mathrm{mod}\, D_i}$ and the image of the restriction of the map $(\mathrm{ad}\, X)_{\mathrm{mod}\, D}$ to $D_i(q)$.
\end{Prop}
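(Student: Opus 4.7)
The plan is to prove the bijection by separately verifying surjectivity (which is automatic) and injectivity (which is the real content). Because $D_i(q) \subset D(q) \cap (D_i)^2(q)$, the triangle $(\mathrm{ad}\, X)_{\mathrm{mod}\, D}|_{D_i(q)} = \mathrm{pr}_i \circ (\mathrm{ad}\, X)_{\mathrm{mod}\, D_i}$ commutes by construction in \eqref{pr_i} and \eqref{two_operators}, so $\mathrm{pr}_i$ automatically sends the image of $(\mathrm{ad}\, X)_{\mathrm{mod}\, D_i}$ onto the image of $(\mathrm{ad}\, X)_{\mathrm{mod}\, D}|_{D_i(q)}$.

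For the injectivity, I would reduce via rank-nullity to the equality of kernels inside $D_i(q)$: $\ker (\mathrm{ad}\, X)_{\mathrm{mod}\, D_i} = \ker (\mathrm{ad}\, X)_{\mathrm{mod}\, D}|_{D_i(q)}$. The inclusion $\subset$ is immediate from $D_i \subset D$; the reverse asks that every $Y \in D_i(q)$ with $[X,Y]_{\mathfrak{m}} = 0$ in $\mathfrak{m}_{-2}(q)$ also satisfies $[X,Y]_{\mathfrak{m}(D_i,q)} = 0$ in $(D_i)^2(q)/D_i(q)$.

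Next I would leverage the Lie algebra decomposition $\mathfrak{m}(q) = \bigoplus_{j=1}^k \mathfrak{m}^j(q)$ from Proposition \ref{Tanaka_decomp_theorem}. Under this decomposition $\mathrm{ad}_\mathfrak{m}(X)$ acts trivially on $\mathfrak{m}^j_{-1}$ for every $j \neq i$ and restricts to a map $\mathfrak{m}^i_{-1} \to \mathfrak{m}^i_{-2}$ which is surjective by the hypothesis of the proposition. This pins down $\dim \ker (\mathrm{ad}\, X)_{\mathrm{mod}\, D}|_{D_i(q)} = \dim D_i - \dim \mathfrak{m}^i_{-2}$, and hence the a priori lower bound $\dim \mathrm{Im}\,(\mathrm{ad}\, X)_{\mathrm{mod}\, D_i} \geq \dim \mathfrak{m}^i_{-2}$.

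The hard part, and the main obstacle, is establishing the reverse inequality — equivalently, showing that any bracket $[X, Y]$ with $Y \in D_i(q)$ whose Tanaka class in $\mathfrak{m}_{-2}$ vanishes must in fact land in $D_i(q)$, and not merely in the potentially larger subspace $D(q) \cap (D_i)^2(q)$. My plan is to lift $Y$ to a section of $D_i$, decompose $[X, Y] \in D(q)$ along $D = D_1 \oplus \cdots \oplus D_k$, and then use the block-diagonal structure of $\mathrm{ad}_\mathfrak{m}(X)$ imposed by the Lie algebra splitting to argue that stray components of $[X, Y]$ in some $D_j$ with $j \neq i$ cannot occur. If needed, further structural constraints on the structure coefficients $c^k_{ab}$ in a frame adapted to $(g_1, g_2)$ coming from the fundamental algebraic system should close the gap, yielding $\dim \mathrm{Im}\,(\mathrm{ad}\, X)_{\mathrm{mod}\, D_i} = \dim \mathfrak{m}^i_{-2}$ and thus the desired bijection.
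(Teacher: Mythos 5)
Your reduction is sound and matches the paper's framing: surjectivity of $\mathrm{pr}_i$ onto the image of the restriction is automatic, and the bijection amounts to the equality of the two kernels inside $D_i(q)$, which in a quasi-normal frame is exactly the vanishing of structure coefficients recorded in \eqref{step1_lemma1_struct} (equivalently \eqref{c^k}). The genuine gap is in the step you yourself single out as the hard part. The block-diagonal structure of $\mathrm{ad}_{\mathfrak m}(X)$ coming from the splitting \eqref{Tanaka_decomposition} cannot rule out stray components of $[X,Y](q)$ along $D_j(q)$, $j\neq i$: that splitting is a statement about the Tanaka symbol of $D$, i.e.\ it only constrains the classes of brackets in $\mathfrak m_{-2}(q)=D^2(q)/D(q)$, whereas the components you need to kill lie inside $D(q)$ itself, precisely in the degree $-1$ part that the symbol bracket does not see. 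Consequently no purely linear-algebraic consequence of the decomposition of $\mathfrak m(q)$, nor of the hypothesis that $X$ is $\mathrm{ad}$-generating, can yield $[X,Y](q)\in D(q)\Rightarrow[X,Y](q)\in D_i(q)$; if it could, the proposition would hold for any distribution with decomposable symbol, independently of the pair of affinely equivalent metrics, and that is not how the paper uses it.

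What actually closes this gap in the paper is exactly the input you defer to a single sentence (``further structural constraints \dots should close the gap''): one constructs the quasi-normal frame \eqref{quasi_complete} adapted to the $\mathrm{ad}$-generating element (so that \eqref{quasi_normal_struct} holds), invokes the solvability of the fundamental algebraic system \eqref{A.phi.B} — available only because $g_1$ and $g_2$ are affinely equivalent, via Propositions \ref{th:orb.to.proj} and \ref{fda} — and extracts \eqref{c^k} from the vanishing of the coefficients of the monomials \eqref{monomial 1} in the maximal minors $\det\bigl([M_{i_0}\,|\,b^1|_{S_{i_0}}]\bigr)$, combined with the relations of Proposition \ref{first divi} and the preparatory Lemmas \ref{b1_lem} and \ref{b^1_the same}. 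This case analysis is the whole of Lemma \ref{lemma step 1} and is the core of the proof of Proposition \ref{step1_lemma1}. Since your proposal neither carries out this analysis nor offers any substitute mechanism that uses the affine-equivalence data, the essential content of the proposition is missing.
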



\begin{Prop}
\label{step1_lemma2}
Assume that  $X\in D_i(q)$ satisfies the assumption of the previous lemma.
Then $(D_i)^2(q)/D_i(q)$ coincides with the image of the map $(\mathrm{ad}\, X)_{\mathrm{mod}\, D_i}$.
\end{Prop}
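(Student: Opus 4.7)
The plan is to reformulate Proposition \ref{step1_lemma2} as the identity $D(q)\cap(D_i)^2(q)=D_i(q)$ and establish it using Proposition \ref{step1_lemma1} together with the $\mathrm{ad}$-surjectivity hypothesis. For the reformulation: by Proposition \ref{step1_lemma1} the restriction of $\mathrm{pr}_i$ to $\mathrm{Im}\bigl((\mathrm{ad}\,X)_{\mathrm{mod}\,D_i}\bigr)$ is a bijection onto $\mathrm{Im}\bigl((\mathrm{ad}\,X)_{\mathrm{mod}\,D}|_{D_i(q)}\bigr)$, and by hypothesis the latter coincides with the full codomain $(D_i)^2(q)/\bigl(D(q)\cap(D_i)^2(q)\bigr)$ of $\mathrm{pr}_i$. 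A dimension count then shows that $\mathrm{Im}\bigl((\mathrm{ad}\,X)_{\mathrm{mod}\,D_i}\bigr)$ is a direct complement to $\ker\mathrm{pr}_i=\bigl(D(q)\cap(D_i)^2(q)\bigr)/D_i(q)$ in $(D_i)^2(q)/D_i(q)$, so equality with the whole space is equivalent to $\ker\mathrm{pr}_i=0$, i.e.\ to the asserted identity.

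For the identity itself, I would first distill from Proposition \ref{step1_lemma1} the following property of $X$: for every $Y\in D_i(q)$, if $[X,Y](q)\in D(q)$, then $[X,Y](q)\in D_i(q)$ (this is simply the injectivity of $\mathrm{pr}_i$ on the image). Given any $w\in(D_i)^2(q)\cap D(q)$, the surjectivity hypothesis produces $Y\in D_i(q)$ with $w\equiv[X,Y](q)\pmod{D(q)\cap(D_i)^2(q)}$, so $[X,Y](q)\in D(q)$, hence $[X,Y](q)\in D_i(q)$, and the residue $z=w-[X,Y](q)$ remains in $D(q)\cap(D_i)^2(q)$. I would then iterate this reduction, using the direct-sum Tanaka decomposition $\mathfrak{m}(q)=\bigoplus_j \mathfrak{m}^j(q)$ from Proposition \ref{Tanaka_decomp_theorem} (which yields $[D_i,D_j](q)\subset D(q)$ for $j\neq i$) and the step $2$ hypothesis on $D$ to force the residues to decrease in complexity.

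The main obstacle I expect is ensuring that the iteration terminates. At each step the residue is still an arbitrary element of $D(q)\cap(D_i)^2(q)$, so without additional structure there is no a priori reason to expect shrinkage. I anticipate that the quasi-normal form for $\mathrm{ad}$-surjective Lie algebras introduced later in the paper at \eqref{quasi_complete} is exactly what supplies this structure: it should give a canonical finite expansion of any element of $(D_i)^2(q)$ in terms of brackets involving $X$, so that finitely many applications of the one-step reduction exhaust the residue and force $w\in D_i(q)$.
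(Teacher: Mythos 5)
Your first step is sound and coincides with the paper's own reduction: given Proposition \ref{step1_lemma1} and the surjectivity hypothesis, the assertion of Proposition \ref{step1_lemma2} is equivalent to $D(q)\cap (D_i)^2(q)=D_i(q)$, i.e.\ (in a quasi-normal frame) to the vanishing \eqref{target 2} of the structure functions $c^s_{lr}$ with $l,r\in\mathcal I_i^1$ and $s\in[1:m]\backslash\mathcal I_i^1$. This is exactly how subsection \ref{Prop4.6_sec} begins.

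The gap is in the second half, and it is not a technicality that the quasi-normal form can repair. Proposition \ref{step1_lemma1} (equivalently \eqref{c^k}) only constrains brackets $[X,\cdot]$ with the single $\mathrm{ad}$-generating element, whereas \eqref{target 2} concerns $[Y,Z]$ for \emph{arbitrary} pairs of sections of $D_i$. Your one-step reduction is vacuous: for $w\in D(q)\cap(D_i)^2(q)$ the class of $w$ in $(D_i)^2(q)/\bigl(D(q)\cap(D_i)^2(q)\bigr)$ is already zero, so surjectivity permits $Y=0$ and the residue is $w$ itself, as you half-acknowledge. Moreover no symbol-level algebra can force termination: the Tanaka decomposition \eqref{Tanaka_decomposition} controls brackets only modulo $D(q)$, while \eqref{target 2} is a statement about the component of $[X_l,X_r](q)$ \emph{inside} $D(q)$; and a "canonical expansion of elements of $(D_i)^2(q)$ in brackets with $X$ modulo $D_i(q)$" is precisely the conclusion to be proved, so invoking \eqref{quasi_complete} (which merely defines the frame vectors of degree $-2$ as brackets with the distinguished element) is circular. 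The paper obtains \eqref{target 2} from genuinely new analytic input: the \emph{second} layer of the fundamental algebraic system. Concretely, the matrix $N_{i_0}$ in \eqref{Ni} contains the row $a^2_{n_{i_0-1}+1}$ of $A^2$, and the coefficients of the monomials \eqref{monomial 3} in $\det[N_{i_0}|b|_{T_{i_0}}]$ are computed in Lemma \ref{lemma p2}, using Sublemma \ref{last_last_coro} (proved in Appendix \ref{appendix B}), Proposition \ref{first divi}, and the linear independence of the vectors $v^{m+1},\dots,v^{m+d_1}$ coming from $\mathrm{ad}$-surjectivity; this first yields \eqref{cm_1+1js} only for the chosen frame, and the full statement \eqref{target 2} then follows from the frame-perturbation/genericity argument of Corollary \ref{YZ_cor} (with the degenerate cases $d_i\le 1$ handled separately in Remark \ref{d11_rem}). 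None of this machinery is present, even in outline, in your proposal, so the essential content of the proposition remains unproved.
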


\begin{Prop}
\label{step1prop}
The following identity holds 
\begin{equation}
 D(q)\cap (D_i)^2(q)=D_i(q) \quad  \forall i\in [1:\tilde k]. \label{Tanaka_coincidence_1}
\end{equation}
\end{Prop}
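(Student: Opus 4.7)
The plan is to combine Propositions \ref{step1_lemma1} and \ref{step1_lemma2} with the ad-surjectivity of each $\mathfrak{m}^i(q)$ so as to force the canonical projection $\mathrm{pr}_i$ from \eqref{pr_i} to be injective. Since the inclusion $D_i(q)\subset D(q)\cap (D_i)^2(q)$ is immediate from $D_i\subset D$ and $D_i\subset (D_i)^2$, it suffices to prove the reverse, which is equivalent to showing that the kernel of
\[
\mathrm{pr}_i:(D_i)^2(q)/D_i(q)\to (D_i)^2(q)/\bigl(D(q)\cap (D_i)^2(q)\bigr),
\]
namely $\bigl(D(q)\cap (D_i)^2(q)\bigr)/D_i(q)$, is trivial.

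To produce such injectivity, fix $i$ and pick an $\mathrm{ad}$-generating element $X\in D_i(q)=\mathfrak{m}_{-1}^i(q)$. Such an $X$ exists because, by Proposition \ref{Tanaka_decomp_theorem} combined with Remark \ref{sum_ad_surj} applied to the hypothesized decomposition of the Tanaka symbol of $D$ into $\mathrm{ad}$-surjective indecomposable components, each $\mathfrak{m}^i(q)$ is itself $\mathrm{ad}$-surjective. Recalling the definition $\mathfrak{m}_{-2}^i(q)=(D_i)^2(q)/\bigl((D_i)^2(q)\cap D(q)\bigr)$, the surjectivity of $\mathrm{ad}\, X:\mathfrak{m}_{-1}^i(q)\to \mathfrak{m}_{-2}^i(q)$ translates verbatim into the statement that the restriction of $(\mathrm{ad}\, X)_{\mathrm{mod}\, D}$ to $D_i(q)$ is onto $(D_i)^2(q)/\bigl(D(q)\cap (D_i)^2(q)\bigr)$. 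Hence $X$ satisfies the hypothesis of both Proposition \ref{step1_lemma1} and Proposition \ref{step1_lemma2}.

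Applying Proposition \ref{step1_lemma2} to this $X$ yields that the image of $(\mathrm{ad}\, X)_{\mathrm{mod}\, D_i}$ equals all of $(D_i)^2(q)/D_i(q)$, while Proposition \ref{step1_lemma1} says that $\mathrm{pr}_i$ restricted to this image is a bijection onto the image of $(\mathrm{ad}\, X)_{\mathrm{mod}\, D}|_{D_i(q)}$. Combining the two, $\mathrm{pr}_i$ is injective on its entire domain, so its kernel is zero, and the claim $D(q)\cap (D_i)^2(q)=D_i(q)$ follows.

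The substantive content has been absorbed into Propositions \ref{step1_lemma1} and \ref{step1_lemma2}; the present proposition is the bookkeeping that converts their image-identification into the desired equality of subspaces, once $\mathrm{ad}$-surjectivity of $\mathfrak{m}^i(q)$ furnishes a single element $X$ that triggers both. I therefore do not anticipate a genuine obstacle here beyond correctly aligning the various quotients — in particular, being careful that the $\mathfrak{m}_{-2}^i(q)$ appearing in the ad-surjectivity hypothesis is $(D_i)^2(q)$ modulo its intersection with $D(q)$, and not modulo $D_i(q)$.
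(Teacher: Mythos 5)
Your proof is correct and follows essentially the same route as the paper: the authors likewise deduce Proposition \ref{step1prop} by combining Propositions \ref{step1_lemma1} and \ref{step1_lemma2} for an $\mathrm{ad}$-generating element $X\in D_i(q)$ (whose existence comes from Proposition \ref{Tanaka_decomp_theorem} and Remark \ref{sum_ad_surj}), only phrasing the conclusion as a chain of dimension (in)equalities rather than as injectivity of $\mathrm{pr}_i$, which is the same bookkeeping.
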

\begin{Prop}
\label{PropD2D3}
The following identity holds
\begin{equation}
(D_i)^3(q)=(D_i)^2(q) \quad  \forall i\in [1:\tilde k]
\label{Tanaka_coincidence_2}
\end{equation}
and therefore the distribution $D_i^2$ is involutive.
\end{Prop}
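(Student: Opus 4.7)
The plan is to prove $[D_i, (D_i)^2](q) \subseteq (D_i)^2(q)$; involutivity of $(D_i)^2$ then follows by the Frobenius theorem. I will work in a local frame $\{X_1,\dots,X_n\}$ at $q_0$ adapted to the decomposition \eqref{D12}, partitioned into blocks for $D_1,\dots,D_k$ inside $D$ and, above $D$, into lifts of the Tanaka summands $\mathfrak{m}^1_{-2},\dots,\mathfrak{m}^k_{-2}$ from Proposition \ref{Tanaka_decomp_theorem}. Expanding
\[
[X_a,[X_b,X_c]](q) = \sum_d c^d_{bc}(q)\,[X_a,X_d](q) + \sum_d (X_a c^d_{bc})(q)\,X_d(q)
\]
for $a,b,c$ in the $D_i$-block reduces the statement to vanishing patterns for the structure functions $c^d_{bc}$ and their first derivatives at $q$.

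The value vanishing $c^d_{bc}(q)=0$ for $X_d$ in a foreign block (a $D_j$-block or $\mathfrak{m}^j_{-2}$-block with $j\neq i$) is already in hand: the $D_j$-part comes from $D(q)\cap(D_i)^2(q)=D_i(q)$ of Proposition \ref{step1prop}, and the $\mathfrak{m}^j_{-2}$-part from the direct-sum decomposition $\mathfrak{m}(q)=\bigoplus_j\mathfrak{m}^j(q)$ of Proposition \ref{Tanaka_decomp_theorem}, which forces $[\mathfrak{m}^i_{-1},\mathfrak{m}^i_{-1}]\subseteq\mathfrak{m}^i_{-2}$. The derivative vanishing $(X_a c^d_{bc})(q)=0$ for the same range of $d$ is the core new ingredient. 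I propose to extract it from a layer of the fundamental algebraic system \eqref{A.phi.B} one prolongation deeper than the layer underlying Proposition \ref{step1prop}: solvability of the overdetermined system, granted by Proposition \ref{fda}, makes every $(n-m+1)\times(n-m+1)$ minor of $[A\mid b]$ vanish as a polynomial in the fibre coordinates $u_j$, so each monomial coefficient vanishes individually. Reading off the coefficients of judiciously chosen monomials and invoking the quasi-normal form \eqref{quasi_complete} for $\mathrm{ad}$-surjective algebras produces a square linear system on the unknowns $(X_a c^d_{bc})(q)$ whose matrix, built from the $\mathrm{ad}$-generating elements of the individual $\mathfrak{m}^j$, is non-degenerate.

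Once both vanishings are in hand, only indices $d$ in the $D_i$-block or the $\mathfrak{m}^i_{-2}$-block contribute. For $d$ in the $D_i$-block both $[X_a,X_d](q)\in(D_i)^2(q)$ and $X_d(q)\in D_i(q)$, so these terms land in $(D_i)^2(q)$. For $d$ in the $\mathfrak{m}^i_{-2}$-block the frame choice puts $X_d(q)\in(D_i)^2(q)$, handling the second sum; taking these $X_d$ to be sections of $[D_i,D_i]$ turns $[X_a,X_d](q)$ into triple brackets of the shape under study, producing a self-referential linear system on the residues $[X_a,[X_b,X_c]](q)\bmod(D_i)^2(q)$ whose matrix, expressed in the already-constrained structure constants and ordered via the quasi-normal form, is strictly triangular and hence leaves only the zero solution. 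The principal obstacle is the derivative-vanishing step: isolating the correct monomials in the deeper layer of \eqref{A.phi.B} and verifying non-degeneracy of the associated linear system via the $\mathrm{ad}$-surjective quasi-normal form, carrying the monomial-extraction strategy of Lemmas \ref{lemma step 1}, \ref{lemma step 1 k com}, and \ref{lemma p2} one prolongation further.
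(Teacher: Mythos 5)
There is a genuine gap, and it sits exactly where your outline does the least work. First, the ingredient you call the core novelty --- the derivative vanishing $(X_a c^d_{bc})(q)=0$ for $d$ in a foreign block --- is actually free and needs no deeper layer of the fundamental algebraic system: Proposition \ref{step1prop} and Proposition \ref{Tanaka_decomp_theorem} hold at \emph{every} point of the neighborhood $U$, so for $b,c\in\mathcal I_i^1$ the structure functions $c^d_{bc}$ with $d\notin\mathcal I_i^1\cup\mathcal I_i^2$ vanish identically near $q$, and hence so do all their derivatives. Your proposed deeper-layer minor analysis is therefore aimed at unknowns that carry no information. The real content of the proposition is the remaining case you defer to the end: showing that $[X_y,X_{m+t}](q)\in (D_i)^2(q)$ for the new frame directions $X_{m+t}$, $m+t\in\mathcal I_i^2$, i.e.\ the vanishing of the mixed structure constants $c^j_{y,m+t}$ with $y\in\mathcal I_i^1$ and $j\notin\mathcal I_i^1\cup\mathcal I_i^2$. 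Your ``self-referential linear system'' on the residues cannot deliver this: expanding $[X_a,[X_b,X_c]]$ and using the pointwise vanishings only expresses each residue $[X_a,[X_b,X_c]]\bmod (D_i)^2$ as a combination of the basic residues $[X_a,X_{m+t}]\bmod (D_i)^2$; for the generating pair $(b,c)=(n_{i-1}+1,\,n_{i-1}+t+1)$ the quasi-normal form \eqref{quasi_complete} makes the relation the tautology $[X_a,X_{m+t}]\equiv[X_a,X_{m+t}]$, so no triangularity argument can force these residues to zero. Indeed, a step-$3$ configuration satisfying all the constraints you have used so far would satisfy your identities with nonzero residues, so genuinely new input from the affine equivalence is indispensable at this point.

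The paper supplies that input differently. From \eqref{target 2} it first deduces that the column $\Tilde b$ vanishes in \emph{all} layers (using the explicit first-layer solution $\Phi_j=\alpha^2_{n_{v-1}+1}u_j$ and the generic full rank of $A^1$, see \eqref{2nd layer monomial}); then Lemmas \ref{4.5_lem_1} and \ref{4.5_lem_2} read off second-layer monomial coefficients to obtain the \emph{symmetrized} relations $c^{j}_{y,m+l-1}+c^{j}_{l,m+y-1}=0$ and the special relation $c^{j}_{1,m+t}=0$ (equation \eqref{adX1_1}, using that $X_1$ is the $\mathrm{ad}$-generating element); finally Lemma \ref{Jacobi_lem1} uses the Jacobi identity together with \eqref{quasi_complete} to produce the \emph{antisymmetric} relation $c^{j}_{y,m+l-1}-c^{j}_{l,m+y-1}=0$, and only the combination of the two kills each coefficient, giving $(D_i)^3=(D_i)^2$. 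Your outline contains no counterpart of this symmetric-plus-antisymmetric mechanism, nor of the special role of the $\mathrm{ad}$-generating direction, so as written the proof does not go through.
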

\begin{Prop}
 \label{Dij^2_involutive}
 For every two distinct  $r$ and $t$ from $[1:k]$ the distribution $D_r^2+D_t^2$ is involutive.
\end{Prop}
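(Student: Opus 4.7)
The strategy is to reduce everything to the single first-order identity (the \emph{Key Identity})
\[
[D_r,D_t]\subset D_r+D_t,
\]
and then to bootstrap from it to the full involutivity of $D_r^2+D_t^2$ via the Jacobi identity together with Proposition \ref{PropD2D3}.

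I would first dispose of the easy directions. By Proposition \ref{PropD2D3}, $[D_r^2, D_r^2]\subset D_r^2$ and $[D_t^2, D_t^2]\subset D_t^2$, so the only remaining content of $[D_r^2+D_t^2, D_r^2+D_t^2]\subset D_r^2+D_t^2$ is the mixed bracket $[D_r^2, D_t^2]$. Writing $D_i^2 = D_i + [D_i, D_i]$ as a sheaf of vector fields, a short chain of Jacobi identities shows that, granted the Key Identity, each of the brackets $[D_r, [D_t, D_t]]$, $[[D_r, D_r], D_t]$, and $[[D_r, D_r], [D_t, D_t]]$ lies in $D_r^2+D_t^2$. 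For instance, for $X\in D_r$ and $Y_1,Y_2\in D_t$, the Jacobi identity gives $[X,[Y_1,Y_2]] = [[X,Y_1],Y_2] + [Y_1,[X,Y_2]]$, and each $[X,Y_i]$ is in $D_r+D_t$ by the Key Identity, so the outer brackets lie in $[D_r+D_t, D_t]\subset D_r+D_t+D_t^2 \subset D_r^2+D_t^2$; the deeper bracket $[[D_r,D_r],[D_t,D_t]]$ is handled by iterating the same argument and invoking the involutivity of $D_r^2$.

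The content of the proof therefore lies in the Key Identity. Proposition \ref{Tanaka_decomp_theorem} already supplies the ``modulo $D$'' part: the direct-sum-of-Lie-algebras decomposition $\mathfrak m(q) = \bigoplus_i \mathfrak m^i(q)$ forces $[\mathfrak m^r_{-1}(q), \mathfrak m^t_{-1}(q)] = 0$ in $\mathfrak m_{-2}(q)$, equivalently $[D_r, D_t](q)\subset D(q)$ pointwise. Using the decomposition $D(q) = \bigoplus_j D_j(q)$, for $X\in D_r$ and $Y\in D_t$ one writes $[X,Y](q) = \sum_j W_j(q)$ with $W_j(q)\in D_j(q)$, and the Key Identity is exactly the assertion $W_j(q)=0$ for every $j\neq r,t$. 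Equivalently, choosing a local frame $\{X_1,\ldots,X_n\}$ adapted to the pair $(g_1,g_2)$ and block-split along $D=\bigoplus D_i$, the structure coefficients $c^p_{j,l}$ must vanish whenever $j$ is in the $r$-block, $l$ is in the $t$-block, and $p$ is in the $j'$-block for some $j'\neq r,t$.

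This vanishing of the cross-block structure coefficients is the main obstacle, and I expect it to be carried out by the same kind of analysis that underlies the proofs of Propositions \ref{step1prop} and \ref{PropD2D3}. One would extract equations for the offending $c^p_{j,l}$ from the monomial coefficients of the polynomial entries of the augmented matrix $[A\,|\,b]$ of the fundamental algebraic system \eqref{A.phi.B}, by imposing the vanishing of appropriate $(n-m+1)\times(n-m+1)$ minors. The ad-surjective hypothesis enters critically through the quasi-normal form \eqref{quasi_complete} of the adapted frame, which both isolates the relevant structure constants and supplies enough monomials with ``simplest'' coefficients to force them to vanish; this is precisely the step where ad-surjectivity is indispensable.
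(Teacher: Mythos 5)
Your reduction step is sound and is in essence the route the paper takes: the paper also deduces the proposition from Proposition \ref{PropD2D3} together with the first-order cross-bracket identity, using the Jacobi identity and the quasi-normal frame (subsection \ref{Dij_prop_sec}, where \eqref{no_square} is exactly your Key Identity for the relevant frame fields, obtained from the Tanaka decomposition \eqref{Tanaka_decomposition} plus the vanishing of the cross-block structure coefficients). Your frame-free bootstrap is fine and in fact slightly more systematic, since it also covers the bracket of the two second layers, which the reduction \eqref{4.6_goal} in the paper leaves implicit. Note also that for $k=2$ the ``third block'' coefficients do not exist, so in that case your Key Identity follows at once from Proposition \ref{Tanaka_decomp_theorem} and \eqref{D12}, and your argument is complete.

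The genuine gap is for $k\geq 3$: your Key Identity is precisely \eqref{3dist_0}, and you do not prove it --- you only state that you ``expect'' it to follow from the same kind of minor analysis as in the earlier propositions. This is the technical heart of the proposition, and it is not a routine repetition of Lemma \ref{lemma step 1 step 1}-type arguments: the paper devotes subsection \ref{3dist_subsec} to it. Concretely, one must (i) choose the specific submatrices $P_{i_0}$ of $[A|\Tilde b]$ with rows \eqref{rows of N_1} and the specific monomials \eqref{monomial 4 k com}, and show that their vanishing yields only the two-term relations \eqref{3dist_3}; (ii) observe that these relations alone do not force the individual coefficients to vanish, and combine the versions obtained by permuting the roles of the three blocks with item (3) of Proposition \ref{first divi} (an identity coming from \cite{Z}, not from the fundamental algebraic system) to get a homogeneous $3\times 3$ system whose determinant is nonzero exactly because the eigenvalues $\alpha_r^2,\alpha_t^2,\alpha_{i_0}^2$ are pairwise distinct; and (iii) note that this only gives the vanishing for the restricted index ranges in \eqref{struct_3_distinct}, so a further perturbation/genericity argument over quasi-normal frames adapted to the same $\mathrm{ad}$-generating tuple (Corollary \ref{ils_cor} and the paragraph following it, resting on Corollary \ref{indep_cor}) is needed to reach all of \eqref{3dist_0}, equivalently \eqref{struct_3_distinct_reform_2}. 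Without carrying out these steps, the proposal establishes the proposition only in the case $k=2$.
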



First, let us show that Propositions \ref{step1_lemma1} and \ref{step1_lemma2}  imply  Proposition \ref{step1prop}. Indeed, we have the following chain of inequalities/ equalities:

$$\dim (D_i)^2(q)/D_i(q)\stackrel{\eqref{Tanaka_neq}}{\leq}\dim (D_i)^2(q)/\left(D(q)\cap (D_i)^2(q)\right)\stackrel{\textrm{Prop. } \ref{step1_lemma2}}{=}$$
$$\mathrm{rank} \left((\mathrm{ad} X)_{\mathrm{mod} D_i}\right) \stackrel{\textrm{Prop. } \ref{step1_lemma1}}{=}\mathrm{rank} \left((\mathrm{ad} X)_{\mathrm{mod} D}|_{D_i(q)}\right)\leq \dim (D_i)^2(q)/D_i(q) $$
Hence, $\dim (D_i)^2(q)/D_i(q)=\dim (D_i)^2(q)/\left(D(q)\cap (D_i)^2(q)\right)$, which implies \eqref{Tanaka_coincidence_1}.

Further, as a direct consequence of Propositions \ref{step1prop} and \ref{D2D3_sec} and the assumption that $D$ is a step $2$ distribution  one gets the following
\begin{Coro}
The Tanaka symbol of $D_i$ at $q$ is isomorphic to $\mathfrak{m}^i(q)$ and the distribution $D_i^2$ is of rank equal to $\dim \mathfrak{m}^i$.
\end{Coro}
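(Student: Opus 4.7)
The proof will be essentially a bookkeeping exercise combining the two preceding propositions with the hypothesis that $D$ is a step $2$ distribution, so I will not expect any substantial obstacle; the main point is matching definitions carefully.

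First I will identify the graded vector space structures. The Tanaka symbol of $D_i$ at $q$ is, by definition, $\bigoplus_{j\geq 1}(D_i)^j(q)/(D_i)^{j-1}(q)$, while $\mathfrak{m}^i(q)$ has components $\mathfrak{m}^i_{-1}(q)=D_i(q)$ and $\mathfrak{m}^i_{-j}(q)=(D_i)^j(q)/\bigl((D_i)^j(q)\cap D^{j-1}(q)\bigr)$ for $j>1$. For $j=2$, Proposition \ref{step1prop} gives $D(q)\cap (D_i)^2(q)=D_i(q)$, so $\mathfrak{m}^i_{-2}(q)=(D_i)^2(q)/D_i(q)$, which is exactly the degree $-2$ component of the Tanaka symbol of $D_i$. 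For $j\geq 3$, the assumption that $D$ is step $2$ gives $D^{j-1}(q)=T_qM\supset (D_i)^j(q)$, so $\mathfrak{m}^i_{-j}(q)=0$. On the other side, Proposition \ref{PropD2D3} yields $(D_i)^3(q)=(D_i)^2(q)$, and by an immediate induction using the recursive definition \eqref{Dpower1}, $(D_i)^j(q)=(D_i)^2(q)$ for all $j\geq 2$. Thus the Tanaka symbol of $D_i$ also has trivial components in degrees $\leq -3$, and the two graded vector spaces agree degree by degree.

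Next I will check that the Lie brackets coincide. In both algebras the bracket in degree $(-1,-1)\to -2$ is constructed by the same recipe: pick sections of $D_i$, take their vector field bracket, and project. The only difference lies in the projection, which is onto $(D_i)^2(q)/D_i(q)$ in the Tanaka symbol of $D_i$ versus onto $(D_i)^2(q)/\bigl(D(q)\cap (D_i)^2(q)\bigr)$ in $\mathfrak{m}^i(q)$. By Proposition \ref{step1prop} these quotients are equal, so the two Lie brackets are literally the same map. All other brackets vanish on both sides for degree reasons, so the identification is an isomorphism of graded Lie algebras.

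Finally, the rank statement: Proposition \ref{PropD2D3} asserts that $D_i^2$ is involutive and in particular of locally constant rank near $q$. Its rank at $q$ equals
\begin{equation*}
\dim (D_i)^2(q)=\dim D_i(q)+\dim\bigl((D_i)^2(q)/D_i(q)\bigr)=\dim\mathfrak{m}^i_{-1}(q)+\dim\mathfrak{m}^i_{-2}(q)=\dim\mathfrak{m}^i(q),
\end{equation*}
using the identification of components established above. This completes the proof of the corollary. As anticipated, no step is delicate: Propositions \ref{step1prop} and \ref{PropD2D3} do all the work, and the corollary just repackages them in the language of Tanaka symbols.
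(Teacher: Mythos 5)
Your proposal is correct and follows exactly the route the paper intends: the paper states this corollary as a direct consequence of Propositions \ref{step1prop} and \ref{PropD2D3} together with the step $2$ assumption, and your argument simply writes out that bookkeeping (degree-by-degree identification of the graded components, equality of the two quotients defining the degree $-2$ brackets, and the dimension count for the rank). The only cosmetic remark is that the constancy of the rank of $D_i^2$ comes from the constancy of $\dim\mathfrak{m}^i_{-2}(q)$ noted earlier in the paper rather than from involutivity itself, but this does not affect the validity of your proof.
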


To guide a reader, the rest of the proof of Theorem \ref{mainthm}  is organized as follows:  Proposition \ref{step1_lemma1} is proved in subsection  \ref{Prop4.5_sec}, Propositon \ref{Dij^2_involutive} is proved in subsections \ref{3dist_subsec} and \ref{Dij_prop_sec}, Proposition \ref{PropD2D3} is proved in subsection \ref{D2D3_sec}, and Proposition \ref{step1_lemma2} is proved in subsection \ref{Prop4.6_sec}. The final step in the proof of Theorem \ref{mainthm} is done in subsection \ref{rotation_sec}.
\subsection{Proof of Proposition \ref{step1_lemma1}}
\label{Prop4.5_sec}
Let $\mathfrak m^i(q)$, $i=1,2$ be as in \eqref{weakdecompsymb_1}-\eqref{weakdecompsymb_2}. Note that by the paragraph after Proposition \ref{le:constant_alphai}  and the fact that the graded algebras $\mathfrak m^i(q)$ are of step not greater than $2$, $\dim \mathfrak m^i_{j}(q)$ are independent of $q$.  Let 
\begin{equation}
\label{m_id_i_def}
\begin{split}
~&m_i:=\dim \mathfrak m^i_{-1}(q), \quad d_i:=\dim \mathfrak  m^i_{-2}(q);\\
~&n_i:=\sum_{j=1}^{i}m_j,\quad e_i:=\sum_{j=1}^{i}d_j;\\
~&\mathcal{I}_i^1 = \left[\left(n_{i-1}+1\right): n_i\right], \quad \mathcal{I}_i^2 = \left[\left(m+e_{i-1}+1\right):\left(m+e_i\right)\right].
\end{split}
\end{equation}
Note that  $n_0=e_0=0$.

Since $ \mathfrak{m}^i$ is $\mathrm{ad}$-surjective for every $i \in [1:k] $, we can choose a local $g_1$-orthonormal basis $(X_1, \ldots, X_m)$ of $D$ such that the following conditions hold for every $i\in [1:k]$:
\begin{enumerate}
\item $D_i=\mathrm{span} \{X_j\}_{j\in \mathcal{I}_i^1}$;
\item $X_{n_{i-1}+1}(q)$  is an $\mathrm {ad}$-generating  (in a sense of Definition \ref {ad_surj_def}) element of the algebra $\mathfrak m^i(q)$.
\end{enumerate}
Then one can complete $(X_1, \ldots, X_m)$ to the local frame $(X_1, \ldots, X_n)$ of $TM$ by setting  
\begin{equation}
\label{quasi_complete}
       X_{m+e_{i-1}+j}:=[X_{n_{i-1}+1}, X_{n_{i-1}+j+1}], \quad \forall i\in [1:k], j\in[1:d_i]. 
\end{equation}
A local frame $(X_1, \ldots, X_n)$ of $TM$ constructed in this way will be called \emph{quasi-normal frame adapted to the tuple} $\{X_{n_{i-1}+1}\}_{i=1}^k$ of $\mathrm{ad}$-generating elements (one for each $\mathfrak m^i$).

By construction, quasi-normality implies the following conditions for the structure functions of the frame: 
\begin{equation}
\label{quasi_normal_struct}
    c^l_{n_{i-1}+1,j} = \delta_{l,m+j+e_{i-1}-n_{i-1}-1}, \quad \forall i\in[1:k], j\in [n_{i-1}+2: n
    _{i-1}+d_i+1], l\in[1:n],
\end{equation}
where $\delta_{s,t}$ stands for  the Kronecker symbol.

In the sequel, we will work with quasi-normal frames: we start with one quasi-normal frame and, if necessary, perturb it to other quasi-normal frames adapted to the same tuple of $\mathrm{ad}$-generating elements.
The statement of Proposition \ref{step1_lemma1} is true if one shows that $\mathrm{pr}_i$ restricted to $\mathrm{Im}(\mathrm{ad}X)_{\mathrm{mod}D_i}$ is injective while the surjectivity follows automatically from the definition of the projection. 
Without loss of generality, we can assume that $i=1$, as the proof for $i\neq 1$ is completely analogous. The injectivity of   ${\mathrm{pr}_1}|_{\mathrm{Im}(\mathrm{ad}X)_{\mathrm{mod}D_1}}$ is equivalent to 
\begin{equation}
\label{ker0} 
    \ker( {\mathrm{pr}_1}|_{\mathrm{Im}(\mathrm{ad}X)_{\mathrm{mod}D_1}}) =0.
\end{equation}
    Clearly,
\begin{equation}
\label{ker1} 
    \ker( {\mathrm{pr}_1}) = \left((D_1)^2(q) \cap D(q)\right) /D_1(q).
\end{equation}
Set $X=X_1$
Then by \eqref{ker1}
\begin{equation}
    \ker( {\mathrm{pr}_1}|_{\mathrm{Im}(\mathrm{ad}X)_{\mathrm{mod}D_1}}) = \left(\left((D_1)^2(q) \cap D(q)\right) /D_1(q)\right)\cap \bigl(\mathrm{Im}(\mathrm{ad}X)_{\mathrm{mod}D_1}\bigr).
\end{equation}
So, the desired relation \eqref{ker0} is equivalent to  
\begin{equation}
\label{step1_lemma1_struct}
    c^k_{1l} = 0, \text{ for } l \in \mathcal{I}_1^1, k \in \displaystyle{\bigcup_{j=2}^k \mathcal{I}_j^1}.
\end{equation}

In the sequel,  we will use  the following Proposition many times:

 \begin{Prop}[\cite{Z}, Proposition 6]\label{first divi}
    If $g_1$ and $g_2$ are affinely equivalent but not constantly proportional to each other, then the following properties hold:
    \begin{enumerate}
        \item $c^j_{ji}  = 0$, for any $i\in \mathcal{I}^1_s$, $j \in \mathcal{I}^1_v$ with $s\neq v$;
        \item $c^i_{jk} = -c^k_{ji}$, for any $i\neq k \in \mathcal{I}_s^1$, $j \in \mathcal{I}_v^1$  with $s\neq v$;\footnote{In more detail, one of the conclusions of \cite{Z}, Proposition 6], formulated  for projective equivalence, is that $X_{i}\left(\frac{\alpha_j^2}{\alpha_i^2}\right)=2c_{ji}^j \left(1-\frac{\alpha_j^2}{\alpha_i^2}\right)$, but in the case of affine equivalence $\alpha_i^2$ and $\alpha_j^2$ are constant and we obtain that $ c_{ji}^j=0$.}
        \item $(\alpha^2_j - \alpha^2_i)c^k_{ji} + (\alpha^2_j - \alpha^2_k)c^i_{jk} + (\alpha^2_i - \alpha^2_k)c^j_{ik} = 0$ for every pairwise distinct  $i,j,k$ from $[1:m]$. 
    \end{enumerate}
\end{Prop}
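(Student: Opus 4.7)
My plan is to derive all three items from a single polynomial identity in the fiber coordinates, obtained from the conservation of $\alpha$ along the flow of $\vec h_1$. From \eqref{eq:2} combined with $2h_1=\sum_{j=1}^m u_j^2$ one has $2\alpha^2 h_1=\sum_{j=1}^m\alpha_j^2 u_j^2$. Since Proposition \ref{th:orb.to.proj} in the affine case gives $\vec{h}_1(\alpha)=0$, while $\vec h_1(h_1)=0$ tautologically, this forces
\begin{equation*}
\vec{h}_1\left(\sum_{j=1}^m \alpha_j^2 u_j^2\right)=0.
\end{equation*}

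Next I would expand this identity using \eqref{eq:vech}; noting that $\vec u_i(u_j)=H_{[X_i,X_j]}=\sum_{k=1}^n c^k_{ij}u_k$, which is a direct consequence of \eqref{poison1}, the identity rewrites as the polynomial relation
\begin{equation*}
\sum_{i,j=1}^{m}\sum_{k=1}^n \alpha_j^2\, c^k_{ij}\,u_i u_j u_k=0
\end{equation*}
in the fiber coordinates $u_1,\ldots,u_n$. Since these are algebraically independent on each fiber of $T^*M$, the coefficient of every distinct monomial must vanish, yielding one linear relation among the structure functions $c^k_{ij}$ with $\alpha_j^2$-weights for each monomial type.

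The three items of the proposition can then be extracted by examining three distinguished families of monomials. For $a\neq b$ in $[1:m]$ the monomial $u_a^2 u_b$ collects contributions only from the ordered triples $(i,j,k)=(a,b,a)$ and $(b,a,a)$, giving $(\alpha_b^2-\alpha_a^2)c^a_{ab}=0$; when $a,b$ lie in eigenspaces of the transition tensor associated with distinct eigenvalues, $\alpha_a\ne\alpha_b$, so $c^a_{ab}=0$, which after renaming and the antisymmetry $c^a_{ab}=-c^a_{ba}$ is precisely item (1). For pairwise distinct $i,j,k\in[1:m]$ the monomial $u_i u_j u_k$ receives six contributions (one for each ordering of the triple); after applying $c^k_{ij}=-c^k_{ji}$ these collapse into the cyclic three-term relation of item (3). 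Item (2) will then follow by specializing (3) to $i,k\in\mathcal{I}_s^1$ and $j\in\mathcal{I}_v^1$ with $s\ne v$: because $\alpha_i^2=\alpha_k^2$ the third term drops, leaving $(\alpha_j^2-\alpha_i^2)(c^k_{ji}+c^i_{jk})=0$, and dividing by the nonzero factor $\alpha_j^2-\alpha_i^2$ gives $c^i_{jk}=-c^k_{ji}$.

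I expect the only delicate point to be the combinatorial bookkeeping of the six ordered-triple contributions to $u_iu_ju_k$ together with the consistent use of sign conventions from Lie-bracket antisymmetry; once this is done correctly the whole proposition is an immediate consequence of a single polynomial identity.
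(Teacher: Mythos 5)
Your derivation is correct, but it follows a genuinely different route from the paper: the paper does not reprove this proposition at all, it imports it from \cite{Z}, Proposition~6 (proved there in the projective setting), and only observes in the footnote that the projective identity $X_i\bigl(\alpha_j^2/\alpha_i^2\bigr)=2c^j_{ji}\bigl(1-\alpha_j^2/\alpha_i^2\bigr)$ collapses to item (1) when the eigenvalues are constant, and in the text that item (2) is the specialization of item (3) to a triple with two indices in the same $\mathcal I_s^1$. You instead give a self-contained proof from the single conservation law $\vec h_1\bigl(\sum_j\alpha_j^2u_j^2\bigr)=0$, obtained by combining \eqref{eq:2} with Proposition~\ref{th:orb.to.proj}, expanded via \eqref{eq:vech} and \eqref{poison1}, and then read off coefficients of the cubic monomials; I checked the bookkeeping, and the coefficients of $u_a^2u_b$ and of $u_iu_ju_k$ do give items (1) and (3) exactly, with (2) following from (3) as you say (note that for $i=k$ item (2) is just item (1), so the specialization of (3) only covers $i\neq k$). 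What your approach buys is transparency: the whole proposition is one polynomial identity, and as a by-product the coefficients of $u_au_bu_c$ with $c>m$ reproduce $c^c_{ab}=0$ for $a,b$ in different eigendistributions, i.e.\ a piece of the symbol decomposition of Proposition~\ref{Tanaka_decomp_theorem}; what the paper's citation buys is brevity and the full projective-case statement. Three small points you should make explicit to close the argument: (i) passing from $\vec h_1\bigl(\sum_j\alpha_j^2u_j^2\bigr)=0$ to your cubic identity uses $\vec h_1(\alpha_j^2)=0$, i.e.\ the constancy of the eigenvalues from Proposition~\ref{le:constant_alphai} (otherwise the extra terms $\sum_j u_j^2\sum_i u_iX_i(\alpha_j^2)$ survive, and extracting their coefficients is precisely how one recovers the projective identity quoted in the paper's footnote); (ii) the identity holds only over the base points near which the orbital diffeomorphism of Proposition~\ref{th:orb.to.proj} exists, so the structure-function relations are first obtained on a dense open set and then extended to all of $U$ by continuity; (iii) in the $u_a^2u_b$ extraction there is a third contributing triple $(i,j,k)=(a,a,b)$, which is harmless since its coefficient $\alpha_a^2c^b_{aa}$ vanishes by skew-symmetry.
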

Note that item (2) above is the consequence of item (3) applied to the case when one pair in the triple $\{i,j,k\}$ belongs to the same $\mathcal I_s^1$.
In all subsequent lemmas, we will assume that the relations given in items (1)-(3) of Proposition \ref{first divi} hold.

Now let us  give more explicit expressions for the vector $b$ in the fundamental algebraic system \eqref{A.phi.B}, which will be helpful in the sequel:

\begin{Lemma}
\label{b1_lem}
     The entries $b^1_j$ in \eqref{term.of.b.rec.form} with $j\in \mathcal I_i^1$ , are given by 
    \begin{equation}\label{b in first group}
        b^1_j = (\alpha_{n_{i-1}+1})^2 \sum_{l\in\mathcal I_i^2} q_{jl}u_l + \sum _{s\neq i}\left( (\alpha_{n_{i-1}+1})^2 -(\alpha_{n_{s-1}+1})^2\right) \sum_{l\in\mathcal I_s^1} q_{jl}u_l,
    \end{equation}
    where $q_{jk}$ are defined by \eqref{qjk}.
\end{Lemma}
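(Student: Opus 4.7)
The plan is a direct computation from the definitions \eqref{term.of.b.rec.form} and \eqref{RJ}, using the Hamiltonian lift formula \eqref{Ham_lift} and the direct-sum decomposition of the Tanaka symbol supplied by Proposition \ref{Tanaka_decomp_theorem}.

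First I would unfold $b^1_j = R_j$. Since $Y_i(u_j)=0$ for every $i,j$, formula \eqref{Ham_lift} gives
\[
\vec h_1(u_j)=\sum_{p=1}^m\sum_{k=1}^n c^k_{pj}\,u_p u_k,
\]
so that \eqref{RJ} becomes
\[
R_j=\alpha_j^2\sum_{p=1}^m\sum_{k=m+1}^n c^k_{pj}\,u_p u_k+\sum_{p,k=1}^m(\alpha_j^2-\alpha_k^2)\,c^k_{pj}\,u_p u_k.
\]
Using $q_{jk}=\sum_{p=1}^m c^k_{pj}\,u_p$ from \eqref{qjk}, this rewrites as
\[
R_j=\alpha_j^2\sum_{k=m+1}^n q_{jk}\,u_k+\sum_{k=1}^m(\alpha_j^2-\alpha_k^2)\,q_{jk}\,u_k.
\]

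Next I would convert the $k$-summations to summations over the blocks $\mathcal I_s^1$ and $\mathcal I_s^2$. For $j\in\mathcal I_i^1$ the vector $X_j(q)$ lies in the eigenspace $D_i(q)$ of the transition operator, hence $\alpha_j^2=(\alpha_{n_{i-1}+1})^2$; similarly $\alpha_k^2=(\alpha_{n_{s-1}+1})^2$ whenever $k\in\mathcal I_s^1$. Partitioning the second sum by the blocks $\mathcal I_s^1$ kills the $s=i$ terms and immediately yields the second summand in \eqref{b in first group}.

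The key step, where the main work of the lemma lies, is showing that in the first sum only the terms with $k\in\mathcal I_i^2$ contribute, i.e.\ that $q_{jl}=0$ whenever $j\in\mathcal I_i^1$ and $l\in\mathcal I_s^2$ with $s\neq i$. This is precisely where Proposition \ref{Tanaka_decomp_theorem} enters: the ambient Tanaka symbol decomposes as a direct sum of Lie algebras $\mathfrak m(q)=\bigoplus_{t=1}^k\mathfrak m^t(q)$. Fix $j\in\mathcal I_i^1$, $l\in\mathcal I_s^2$ with $s\neq i$, and let $p\in[1:m]$ with $X_p(q)\in D_t(q)$. If $t\neq i$, then in $\mathfrak m(q)$ the class of $[X_p,X_j]$ modulo $D$ lies in $[\mathfrak m^t_{-1},\mathfrak m^i_{-1}]=0$, hence $c^r_{pj}=0$ for every $r>m$, and in particular $c^l_{pj}=0$. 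If $t=i$, then the class of $[X_p,X_j]$ modulo $D$ lies in $\mathfrak m^i_{-2}$, which by the construction \eqref{quasi_complete} of the quasi-normal frame is spanned modulo $D$ by $\{X_r:r\in\mathcal I_i^2\}$; consequently $c^l_{pj}=0$ for $l\in\mathcal I_s^2$ with $s\neq i$. Summing over $p$ gives $q_{jl}=0$, as required, and substituting back produces exactly \eqref{b in first group}.

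The main subtlety to guard against is that one must not invoke Proposition \ref{step1prop} at this stage, since that statement is still to be proved; only the direct-sum decomposition of the ambient Tanaka symbol $\mathfrak m(q)$, already supplied by \cite{jmz2019} through Proposition \ref{Tanaka_decomp_theorem}, is available. The rest of the argument is a bookkeeping of indices between the blocks $\mathcal I_s^1$, $\mathcal I_s^2$ and the constant values $(\alpha_{n_{s-1}+1})^2$ of the eigenvalues on each eigenspace.
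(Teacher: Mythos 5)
Your proposal is correct and follows essentially the same route as the paper: expand $b^1_j=R_j$ via the formula for $\vec h_1(u_j)$ (the paper's \eqref{huj}), use constancy of $\alpha^2$ on each block $\mathcal I_s^1$, and invoke the Tanaka-symbol decomposition \eqref{Tanaka_decomposition} to conclude $q_{jl}=0$ for $l\in\bigcup_{s\neq i}\mathcal I_s^2$. The only difference is that you spell out in detail the structure-constant vanishing which the paper asserts directly from \eqref{Tanaka_decomposition}, and this elaboration is accurate.
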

\begin{proof}
Using \eqref{eq:vech}, \eqref{poison1}, and \eqref{struct_function}, we get  
    \begin{equation}
    \label{huj}
        \Vec{h}_1(u_j) = \sum_{i=1}^{m} u_i \vec u_i(u_j)= 
        \sum_{k=1}^n \sum_{i=1}^m u_i u_k c^k_{ij} = \sum_{k=1}^n q_{jk}u_k.
    \end{equation}

    
    From \eqref{term.of.b.rec.form}  and \eqref{RJ}, using \eqref{qjk} and the fact that  by our constructions $(\alpha_l)^2=(\alpha_{n_{i-1}+1})^2$ for every $l\in \mathcal I_i^1$, we have
    \begin{equation}
    \label{b in first group_1}
        \begin{split}
        b^1_j &= (\alpha_{n_{i-1}+1})^2 \Vec{h}_1(u_j) - \sum_{1\leq i,l\leq m} (\alpha_l)^2 u_i u_l c^l_{ij} \\
        &= (\alpha_{n_{i-1}+1})^2 \sum_{k=1}^n q_{jk}u_k  
        - \sum_{i=1}^k (\alpha_{n_{i-1}+1})^2\sum_{l\in \mathcal I_i^1} q_{jl} u_l.
        \end{split}
    \end{equation}
    Note  that \eqref{Tanaka_decomposition} we have \eqref{qjk} $q_{jk} = 0$ for $k\in \displaystyle{\bigcup_{s\neq i} \mathcal{I}_s^2} $, so 
\begin{equation}
\label{b in first group_2}
\sum_{k=1}^n q_{jk}u_k= \sum _{s=1}^k\sum_{l=\mathcal I_s^1}^m q_{jl}u_l+ \sum_{l\in\mathcal I_i^2} q_{jl}u_l.
\end{equation}
Substituting \eqref{b in first group_2} into \eqref{b in first group_1}, we get \eqref{b in first group}.
\end{proof}



Lemma \ref{b1_lem} implies  that to analyze  maximal minors of the augmented matrix $[A|b]$ it is convenient to perform the
following column operation by setting 
\begin{equation}
\label{column_op}
    \Tilde{b} = b - \sum_{i=1}^k(\alpha_{n_{i-1}+1})^2 \sum_{t=e_{i-1}}^{e_i} (A)_t u_{m+t}, 
\end{equation}
where $(A)_j$ represents the $j$th column of $A$ and $e_i$ are defined in \eqref{m_id_i_def}, so the corresponding maximal minors of $[A|b]$ and $[A|\Tilde b]$ coincide. 
\begin{rk}
\label{b>m_rem}
\emph{Using the first line of \eqref{elem.A}, one gets that the first term in \eqref{b in first group} 
is canceled by the column operations, namely
\begin{equation}
\label{b^{1,2}_i}
  \Tilde b_j^1 = \sum _{v\neq i}\left( (\alpha_{n_{i-1}+1})^2 -(\alpha_{n_{v-1}+1})^2\right) \sum_{l\in\mathcal I_v^1} q_{jl}u_l, \quad j\in \mathcal I_i^1,
     \end{equation}
where $\Tilde b^1_j$ is the $j$th component (from the top) of the column vector $\Tilde b^1$.   
Hence, $\Tilde b^1$ has no term of $u_i$'s, with $i>m$.}
\end{rk}

\begin{Lemma}
\label {b^1_the same}
Let $i, v\in [1:k]$, $i\neq v$, $j\in \mathcal I_i^1$ and $r, l\in \mathcal I_v^1$. Then $\Tilde b_j$ does not contain a monomial $u_r u_l$ and in particular it does not contain squares $u_r^2$.
\end{Lemma}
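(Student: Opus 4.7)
The plan is to read off the coefficient of $u_r u_l$ directly from the explicit formula \eqref{b^{1,2}_i} for $\widetilde b^1_j$ and show it vanishes via Proposition \ref{first divi}. I interpret $\widetilde b_j$ in the statement as $\widetilde b^1_j$, consistent with the label of the lemma and its placement immediately after Remark \ref{b>m_rem}.

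First I would expand $q_{jl'}=\sum_{s=1}^m c^{l'}_{sj} u_s$ inside \eqref{b^{1,2}_i} to display $\widetilde b^1_j$ as a sum of monomials $u_s u_{l'}$ indexed by $s\in [1:m]$ and $l'\in \bigcup_{v'\neq i}\mathcal I_{v'}^1\subset [1:m]$. This already shows that every monomial appearing in $\widetilde b^1_j$ uses $u$-variables whose indices lie in $[1:m]$, so any $u_r u_l$ with $r$ or $l$ in $\mathcal I_v^2$ is trivially absent. It remains to treat $r,l\in\mathcal I_v^1$, and only the term with $v'=v$ in the outer sum contributes.

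For $r\neq l$, summing the two contributions (from $l'=l$, $s=r$ and from $l'=r$, $s=l$), the coefficient of $u_r u_l$ equals $\bigl(\alpha_{n_{i-1}+1}^2-\alpha_{n_{v-1}+1}^2\bigr)\bigl(c^{l}_{rj}+c^{r}_{lj}\bigr)$. The main step is to show $c^{l}_{rj}+c^{r}_{lj}=0$: I would apply antisymmetry in the lower indices to rewrite $c^l_{rj}=-c^l_{jr}$, and then apply Proposition \ref{first divi}(2) to $c^l_{jr}$, whose upper index $l$ and second lower index $r$ sit in the same group $\mathcal I_v^1$ while the first lower index $j$ sits in the different group $\mathcal I_i^1$; this yields $c^l_{jr}=-c^r_{jl}$, hence $c^l_{rj}=c^r_{jl}=-c^r_{lj}$, so the sum vanishes. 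For $r=l\in\mathcal I_v^1$ the coefficient of $u_r^2$ is $\bigl(\alpha_{n_{i-1}+1}^2-\alpha_{n_{v-1}+1}^2\bigr)c^{r}_{rj}$, which vanishes directly by Proposition \ref{first divi}(1) since $r\in\mathcal I_v^1$ and $j\in\mathcal I_i^1$ with $v\neq i$.

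No real obstacle is anticipated. The only mildly subtle point is that Proposition \ref{first divi}(2) is stated with the upper and second-lower indices in the same group, so one needs a single use of antisymmetry in the lower pair before applying it; once that is done, the identity $c^l_{rj}=-c^r_{lj}$ drops out and the scalar $\bigl(\alpha_{n_{i-1}+1}^2-\alpha_{n_{v-1}+1}^2\bigr)$ factor is irrelevant to the vanishing. The case of $u_r^2$ is then just the $r=l$ subcase.
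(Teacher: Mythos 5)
Your proposal is correct and follows essentially the same route as the paper: both read off the coefficient of $u_r u_l$ from \eqref{b^{1,2}_i} via \eqref{qjk}, obtaining $\bigl(\alpha_{n_{i-1}+1}^2-\alpha_{n_{v-1}+1}^2\bigr)\bigl(c^{l}_{rj}+c^{r}_{lj}\bigr)$, and kill it with items (1) and (2) of Proposition \ref{first divi}. You merely make explicit the antisymmetry step needed before invoking item (2) and the trivial observation that variables $u_r$ with $r\in\mathcal I_v^2$ cannot occur, which the paper leaves implicit.
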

\begin{proof}
Indeed , by \eqref{b^{1,2}_i}, using \eqref{qjk}, the coefficient of the monomial  $u_r u_l$ in is equal to 
\begin{equation}
\Bigl(\bigl(\alpha_{n_{i-1}+1})^2 -(\alpha_{n_{v-1}+1}\bigr)^2\Bigr)\Bigl(c_{rj}^l+c_{lj}^r\Bigr),
\end{equation}
which is equal to zero by items (1) and  (2) of Proposition \ref{first divi}. 
\end{proof}

Now we will make a long analysis of coefficients of specific monomials in specific $(n-m+1)\times (n-m+1)$ minor of the augmented matrix $[A|b]$ (equivalently, $[A|\Tilde b]$).

First,  to achieve \eqref{step1_lemma1_struct}, given $i_0\in [1:k]$ consider submatrices $M_{i_0}$ of $[A|\Tilde b]$  consisting of rows with indices from the set 
\begin{equation}
\label{rows_M1}
S_{i_0}:=[n_{i_0-1}+1: n_{i_0-1}+d_{i_0}+1] \cup \bigcup_{i\in [1:k]\backslash \{i_0\}}
[n_{i-1}+1: n_{i-1}+d_i].   
\end{equation}
From \eqref{Tanaka_decomposition} it follows that  $M_{i_0}$ is a block-diagonal matrix,
\begin{equation}
\label{M_i}
    M_{i_0} =\begin{pmatrix}
            {M_{i_0,1}} & 0 & 0 \\
            0 & \ddots & 0 \\
            0 & 0 & M_{i_0, k}
        \end{pmatrix},
\end{equation}
where $M_{i_0,i_0}$ is of size $(d_{1_0}+1)\times d_{i_0}$ and $M_{i_0,i}$, $i\neq i_0$ is of size $d_{i_0}\times d_{i_0}$. 
\begin{rk}
\label{M_indep_rem}
\emph{Note that given $j\in [1:k]$ the blocks $M_{i,j}$ are the same for all $i\neq j$. } 
\end{rk}
Let $b^{1}|_{S_{i_0}}$ (resp. $\Tilde b^{1}|_{S_{i_0}}$) be the subcolumn  of the column $b^1$ (reps. $\Tilde b^1$) consisting of the same rows as in $M_{i_0}$, i.e., the rows of $b$ (resp. $\Tilde b$)  from the set $S_{i_0}$ as in  \eqref{rows_M1}. Since the fundamental algebraic system is an overdetermined linear system admitting a solution, the determinant $\det ([M_{i_0}|b^1|_{S_{i_0}}])$ must vanish, as a polynomial with respect to $u_i$'s. It implies that the coefficients of each monomial w.r.t $u_i$'s in $\det (M_1|b^{1,1})$ must vanish as well. 
We have the following 
\begin{Lemma}
\label{lemma step 1}
 Given $i_0 \in [1:k]$, if the coefficients of all monomials of the form 
\begin{equation}\label{monomial 1}
   u_l u_s \left(\prod_
   {i\neq i_0}u_{n_{i-1}+d_i+1}\right) (u_{n_{i_0-1}+1})^{d_{i_0}}\left(\prod_
    {i\neq i_0}
   (u_{n_{i-1}+1})^{d_i-1}\right),
 \quad l\in \mathcal{I}_{i_0}^1, 
 s\in [1:m] \backslash\mathcal I_{i_0}^1
\end{equation}
in $\det ([M_{i_0}|b^{1}|_{S_{i_0}}])$  vanish,
then 
\begin{equation} \label{c^k}
    c^s_{n_{i_0-1}+1\,l} = 0, \quad l\in \mathcal{I}_{i_0}^1, s\in [1:m]\backslash \mathcal I_{i_0}^1.
\end{equation}
\end{Lemma}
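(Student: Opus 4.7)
My plan is to compute the coefficient of the monomial \eqref{monomial 1} in $\det([M_{i_0}|b^1|_{S_{i_0}}])$ explicitly and show that, up to a nonzero scalar, it equals $c^s_{n_{i_0-1}+1,l}$; the hypothesis that every such coefficient vanishes then immediately yields \eqref{c^k}.

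First I will replace $b^1$ by $\Tilde b^1$ via the column operation \eqref{column_op}, which preserves the determinant. The key structural ingredient is Proposition \ref{Tanaka_decomp_theorem}: the Tanaka symbol splits as a Lie-algebra direct sum, so $c^k_{ij}=0$ for $k\in\bigcup_i \mathcal{I}_i^2$ whenever $i$ and $j$ lie in different blocks $\mathcal{I}_\cdot^1$. Consequently, each block $M_{i_0,i}$ has entries that are polynomials only in $\{u_j:j\in\mathcal{I}_i^1\}$. Expanding along the $\Tilde b^1$-column and using that a block-diagonal rectangular shape with two non-square blocks of incompatible row/column deficits has zero determinant, I conclude that only rows $r\in[n_{i_0-1}+1:n_{i_0-1}+d_{i_0}+1]$ contribute, reducing the computation to
\begin{equation*}
\det([M_{i_0}|\Tilde b^1|_{S_{i_0}}])=\sum_{r=n_{i_0-1}+1}^{n_{i_0-1}+d_{i_0}+1}\pm\,\Tilde b^1_r\cdot\det(\hat M_{i_0,i_0}^{(r)})\cdot\prod_{i\neq i_0}\det(M_{i_0,i}).
\end{equation*}

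Next I will use the quasi-normal identities \eqref{quasi_normal_struct} together with antisymmetry $c^k_{jj}=0$ to localize where the variable $u_{n_{i-1}+1}$ can appear inside each block. For $i\neq i_0$, $u_{n_{i-1}+1}$ occurs in $M_{i_0,i}$ only at the ``diagonal'' entries associated with rows $[n_{i-1}+2:n_{i-1}+d_i]$, while the special row $n_{i-1}+1$ carries no $u_{n_{i-1}+1}$-contribution; hence the factor $(u_{n_{i-1}+1})^{d_i-1}u_{n_{i-1}+d_i+1}$ appears in $\det(M_{i_0,i})$ exactly once, with nonzero coefficient $\pm1$ coming from the quasi-normal relation $[X_{n_{i-1}+1},X_{n_{i-1}+d_i+1}]=X_{m+e_i}$ and the antisymmetry $c^{m+e_i}_{n_{i-1}+d_i+1,\,n_{i-1}+1}=-1$. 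An analogous count inside the block $i_0$ shows that $\det(\hat M_{i_0,i_0}^{(r)})$ can realize $(u_{n_{i_0-1}+1})^{d_{i_0}}$ only when $r=n_{i_0-1}+1$, and then with coefficient $+1$ (the identity permutation through the diagonals); for any other $r$ the surviving rows include $n_{i_0-1}+1$ but only $d_{i_0}-1$ diagonal rows, insufficient to assemble $d_{i_0}$ copies of $u_{n_{i_0-1}+1}$.

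Finally, by the explicit formula \eqref{b^{1,2}_i} for $\Tilde b^1_{n_{i_0-1}+1}$, the coefficient of $u_l u_s$ with $l\in\mathcal{I}_{i_0}^1$ and $s\in\mathcal{I}_{v_0}^1$, $v_0\neq i_0$, equals $(\alpha^2_{n_{i_0-1}+1}-\alpha^2_{n_{v_0-1}+1})\,c^s_{l,\,n_{i_0-1}+1}$, since matching indices in the double sum forces $l'=s$ and $i=l$. Combining the three steps, the coefficient of \eqref{monomial 1} in $\det([M_{i_0}|b^1|_{S_{i_0}}])$ equals $\pm(\alpha^2_{n_{i_0-1}+1}-\alpha^2_{n_{v_0-1}+1})\,c^s_{l,\,n_{i_0-1}+1}$. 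Since distinct blocks correspond to distinct eigenvalues of the transition operator by Proposition \ref{le:constant_alphai}, the $\alpha$-factor is nonzero, and the vanishing hypothesis forces $c^s_{l,\,n_{i_0-1}+1}=0$, equivalent to \eqref{c^k} by antisymmetry. The main delicate point, and the reason quasi-normality is indispensable, is ruling out parasitic contributions to the same monomial from other permutations and from rows $r\neq n_{i_0-1}+1$; both are handled by the sharp localization of $u_{n_{i_0-1}+1}$ on specified diagonal positions together with the vanishing $c^k_{jj}=0$.
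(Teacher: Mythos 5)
Your structural reduction is sound and matches the paper's: the cofactor expansion along the $b$-column, the vanishing of minors obtained by deleting a row outside block $i_0$, the localization of $u_{n_{i_0-1}+1}$ on the subdiagonal positions via \eqref{quasi_normal_struct}, and the forcing of $r=n_{i_0-1}+1$ are all correct, as is the computation of the coefficient of $u_lu_s$ in $\Tilde b^1_{n_{i_0-1}+1}$ from \eqref{b^{1,2}_i}. The gap is in the last step, where you multiply the three ``selected'' coefficients. Writing $v_0$ for the index with $s\in\mathcal I_{v_0}^1$, the coefficient of \eqref{monomial 1} in the term $\pm\,\Tilde b^1_{n_{i_0-1}+1}\det(\hat M^{(n_{i_0-1}+1)}_{i_0,i_0})\prod_{i\neq i_0}\det(M_{i_0,i})$ is a \emph{sum over all ways of splitting} the block-$v_0$ part of the monomial (degree $d_{v_0}+1$) between the quadratic factor $\Tilde b^1_{n_{i_0-1}+1}$ and the degree-$d_{v_0}$ factor $\det(M_{i_0,v_0})$. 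Besides your ``main'' splitting ($\Tilde b$ supplies $u_lu_s$), there is the splitting in which $\Tilde b^1_{n_{i_0-1}+1}$ supplies $u_l\,u_{n_{v_0-1}+1}$, with coefficient proportional to $c^{n_{v_0-1}+1}_{n_{i_0-1}+1\,l}$, while $\det(M_{i_0,v_0})$ supplies $u_s(u_{n_{v_0-1}+1})^{d_{v_0}-2}u_{n_{v_0-1}+d_{v_0}+1}$ through a permutation that omits one subdiagonal entry and uses entries whose structure constants are \emph{not} controlled by quasi-normality; generically this contribution is nonzero. Your closing claim that all parasites are excluded by the localization of $u_{n_{i_0-1}+1}$ and by $c^k_{jj}=0$ does not touch these terms, since the misplaced variable here is the generator $u_{n_{v_0-1}+1}$ of the \emph{other} block, not $u_{n_{i_0-1}+1}$.

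This is exactly the case the paper labels (B$2_2$)/(C3), and it cannot be waved away; the paper disposes of it by a two-step bootstrap: first take $s=n_{v_0-1}+1$, for which this splitting is impossible (it would raise the power of $u_{n_{v_0-1}+1}$ beyond what the block can absorb), conclude $c^{n_{v_0-1}+1}_{n_{i_0-1}+1\,l}=0$, and only then observe that for general $s$ the parasitic contribution is proportional to this already-vanishing constant, so the coefficient reduces to the (C4)-term you computed. Your argument needs this (or an equivalent) extra step. A second, smaller point of the same nature: the corner entry $(M_{i_0,v_0})_{1,d_{v_0}}$ may contain a $u_s$-term with coefficient $-c^{m+e_{v_0}}_{n_{v_0-1}+1\,s}$ when $s$ lies outside $[n_{v_0-1}+2:n_{v_0-1}+d_{v_0}+1]$ (quasi-normality constrains it only inside that range, cf.\ \eqref{M+1,2_top_right}), which produces a further splitting where $\Tilde b$ supplies $u_lu_{n_{v_0-1}+d_{v_0}+1}$; your ``appears exactly once with coefficient $\pm1$'' bookkeeping must either rule this out or absorb it into the argument. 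As it stands, the identification of the monomial coefficient with $\pm(\alpha^2_{n_{i_0-1}+1}-\alpha^2_{n_{v_0-1}+1})c^s_{n_{i_0-1}+1\,l}$ is unjustified, so the proof is incomplete.
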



\begin{proof}

In the sequel, we will refer to the classical formula for determinants in terms of permutations of the matrix elements as the \emph{Leibniz formula for determinants}. 
Without loss of generality, we can assume that $i_0=1$ and $s\in \mathcal I_2^1$.
Using  \eqref{qjk}, the first line of \eqref{elem.A}, and \eqref{quasi_normal_struct} it is easy to conclude that  
the variable $u_{n_{i-1}+1}$ appears in the following  columns of the augmented matrix $[M_1|b^{1}|_{S_{1}}]$ only:
\begin{enumerate}
\item[(A$1_i$)] The columns containing all columns of the matrix $M_{1,i}$ if $i=1$  or all columns of $M_{1,i}$ except the last one, if $i\in [2:k]$ . Moreover,  in each of these columns, $u_{n_{i-1}+1}$ appears exactly in the entry $(M_{1, i})_{j+1 j}$, i.e., in the entry situated right below the diagonal of the block $M_{1, i}$. Besides, the coefficient of $u_{n_{i-1}+1}$ in this entry is equal to $1$;
\item[(A$2_i$)] the last column of the matrix  $[M_1|b^{1}|_{S_{1}}]$.
\end{enumerate}

Applying the   normalization conditions \eqref{quasi_normal_struct} to   \eqref{b^{1,2}_i} one gets
\begin{enumerate}
\item [(A$3_i$)] The components  of the column vector ${b}^1|_{S_{1}}$  in the rows corresponding to the rows  of the block $M_{1, i}$ of $M_1$ do not contain $u_{n_{i-1}+1}$.
\end{enumerate}

For our purpose, it is enough to set variables $u_i$ not appearing in \eqref{monomial 1} to be equal to $0$. 
From item (A$1_i$)  above,
the fact that $u_{n_{i-1}+1}$ appears in the power  not less than  $d_i-1$ in the monomial \eqref{monomial 1}, and that by Lemma \ref{b^1_the same} and (A$3_i$) the participating entries do not contain $(u_{n_{i-1}+1})^2$, it follows that in the Leibniz formula for the determinant of the matrix $(M_1|\Tilde b^{1}|_{S_{1}})$ the contribution to the monomial 
\eqref{monomial 1} from the block $M_{1,i}$ comes only from the following terms:
\begin{enumerate}
\item[(B$1_i$)] \emph{Terms containing  all factors of the form $ (M_{1,i})_{j+1 \,j}$.} Moreover, from the facts that  all rows of $M_{1,i}$ except the first one are used, that  $M_1$ has block diagonal structure, and that 
\begin{enumerate}
\item for  $i=1$ all columns appearing in $M_{1, i}$ are used,  it follows that in this case the terms giving the desired contribution must contain the factor $b_1^1$ as the only possible factor from the first row of the augmented matrix  $(M_1|b^{1}|_{S_{1}})$.
\item for $i\in [2:k]$  all columns of $M_{1,i}$ except the last one
are used, it follows that in this case the terms giving the desired contribution must contain the entry $(M_{1, i})_{1\, d_2}$. Note that if we take into account only those $u_i$'s which  appear in \eqref{monomial 1}  (or, equivalently, set all other $u_i$'s equal to zero) and that, by \eqref{quasi_normal_struct}, $c^{m+e_1}_{n_1+1, s}=\delta_ {n_1+d_2+1, s}$, we have
\begin{equation}
\label{M+1,2_top_right}
(M_{1,i})_{1\, d_i}=
%
- u_{n_{i-1}+d_i+1}
.
\end{equation}
\end{enumerate} 
\item [(B$2_i$)] (possible only if either $i=1$ or  $i=2$ and $s\neq n_1+1$ \footnote{Otherwise the desired power of $u_{n_{i-1}+1}$ in \eqref{monomial 1} cannot be achieved.}) \emph{Terms containing all factors of the form  
$(M_{1,i})_{j+1 \,j}$, $j\in[1: d_2-1]$ except one.} Then these terms also contain a factor from the column $b^{1}|_{S_{1}}$ depending on the variable $u_{n_{i-1}+1}$.
\end{enumerate}

Now consider four possible cases separately:

{\bf (C1) Assume that  (B$2_1$) and (B$2_2$) occur simultaneously.} 
Since for every $i\in [3:k]$ item $(B1_i)$ holds,   the participating factor $b^1_j$ must satisfy  $j\in [1:d_1+1] \cup [n_1+1:n_1+d_2]$. On the other hand, it must contain  the monomial $u_1u_{n_1+1}$, which by $(A3_1)$ and $(A3_2)$ implies that $j\notin  [1:d_1+1] \cup [n_1+1:n_1+d_2]$, so we got the contradiction. 
So, the considered term does not contribute to the monomial \eqref{monomial 1}.

{\bf  (C2) Assume that  (B$1_i$) for all $i\in[2:k]$ and (B$2_1$) occur simultaneously.} 
Then the participating factor $b^1_j$ from $b^{1}|_{S_{1}}$ must, on one hand, satisfy $j\in [1:d_1]$  and, on the other hand, must contain $u_1$, which contradicts  $(A3_1)$.
So, the considered term does not contribute to the monomial \eqref{monomial 1}.

{\bf  (C3) Assume that  (B$1_i$) for $i\neq 2$ and (B$2_2$) occur simultaneously.}
In this case the contribution to the monomial
\eqref{monomial 1} is from the coefficient of the monomial $u_l u_{n_1+1}$ in the factor $b_1^1$, which is equal to $((\alpha_{n_1+1})^2 - \alpha_1^2)c_{1l}^{n_1+1}$.

 {\bf (C4). Assume that  (B$1_i$) occur simultaneously for every $i\in[1:k]$.} In this case, the  coefficient of  the monomial
 \eqref{monomial 1}  is equal, up to a sign,  to the coefficient of the monomial $u_l u_s u_{n_1+d_2+1}$ in the polynomial   $b_1^1 (M_{1,2})_{1, d_2}$, because the coefficients of the relevant monomials in all other factors in the corresponding term of Leibniz formula are equal to 1. From \eqref{M+1,2_top_right} it follows that the coefficient of the monomial $u_l u_s u_{n_1+d_2+1}$ in the polynomial   $b_1^1 (M_{1,2})_{1, d_2}$ is equal to 
 the coefficient of $u_l u_s$ in $b_1^1$,
 which by \eqref {b^{1,2}_i}  is equal to 
\begin{equation}
\label{C4_total}
( (\alpha_{n_1+1})^2-\alpha_1^2)
c_{1l}^{s}.
\end{equation}

Now we are ready to complete the proof of the lemma.  First, assume that $s=n_1+1$. Then the case ($B2_2$) and therefore (C3) is impossible, so (C4) holds and  the  coefficient of  the monomial
 \eqref{monomial 1}  is equal, up to a sign,  to the expression in \eqref{C4_total} 
 with $s=n_1+1$, so vanishing this coefficient implies  
\eqref{c^k} for $s=n_1+1$. 

Further, the last paragraph implies the case (C3) does not contribute to the coefficients of the monomials \eqref{monomial 1}, so the only contribution is from the case (C4), which is equal to the expression in \eqref{C4_total}.
Vanishing of the latter implies \eqref{c^k}, which completes the proof of Lemma \ref{lemma step 1}.
\end{proof}


Now given $i\in [1:k]$, assume that $X_{n_{i-1}+1}$ is a local section of $D_i$  such that $X_{n_{i-1}+1}(q)$ is an  $\mathrm{ad}$-generating element of $\mathfrak m^i(q)$ in the sense of Definition \eqref{ad_surj_def} for every $q$. Let 
\begin{align}
~& K_{
X_{n_{i-1}+1}}
:=\mathrm {ker} \left((\mathrm{ad}\, X_{n_{i-1}+1})_{\mathrm{mod} D_i}\right) \label{K_def}\\
~& H_{X_{n_{i-1}+1}}:=(\mathrm{span} \{X_{n_{i-1}+1}\})^\perp \cap D_i, \label{H_def} 
\end{align}
where $~^\perp$ stands for the $g_1$-orthogonal complement. 
As a direct consequence of Lemma \ref{lemma step 1}, we get the following
\begin{Coro}
\label{indep_cor}
For any $d_i$-dimensional subspace $F_i$ of $H_{X_{n_{i-1}+1}}$ with
\begin{equation}
\label{FK}
F_i\cap K_{X_{n_{i-1}+1}}=0    
\end{equation}
the image of the restriction of the map $(\mathrm{ad}\, X_{n_{i-1}+1})_{\mathrm{mod} D_1}$ to the subspace $F_i$ coincides with the entire image of the map $(\mathrm{ad}\, X_{n_{i-1}+1})_{\mathrm{mod} D_i}$,
\begin{equation}
\label{Im_indep_F}
\mathrm{Im}  \Bigl(\bigl(\mathrm{ad}\, X_{n_{i-1}+1})_{\mathrm{mod} D_i}\bigr)|_{_{F_i}}\Bigr)=\mathrm{Im}  \bigl((\mathrm{ad}\, X_{n_{i-1}+1})_{\mathrm{mod} D_i}\bigr).
\end{equation}
In particular, this image of the restriction is independent of $F_i$. 
\end{Coro}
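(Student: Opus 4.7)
My plan is to prove the stronger statement $\mathrm{Im}(A_i|_{F_i}) = \mathrm{Im}(A_i)$, where $A_i := (\mathrm{ad}\, X_{n_{i-1}+1})_{\mathrm{mod}\, D_i}: D_i(q) \to (D_i)^2(q)/D_i(q)$, by showing both sides have dimension $d_i$. Since $\mathrm{Im}(A_i|_{F_i}) \subseteq \mathrm{Im}(A_i)$ is automatic, once the dimensions match the two subspaces must coincide. The first dimension is free of charge: by the hypotheses $\dim F_i = d_i$ and $F_i \cap K_{X_{n_{i-1}+1}} = 0$, the restriction $A_i|_{F_i}$ is injective, so $\dim \mathrm{Im}(A_i|_{F_i}) = d_i$. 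The real task is to pin down $\dim \mathrm{Im}(A_i) = d_i$, which I will do by a sandwich argument.

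For the lower bound $\dim \mathrm{Im}(A_i) \geq d_i$ I appeal to ad-surjectivity. The Tanaka decomposition of Proposition \ref{Tanaka_decomp_theorem} identifies $\mathfrak{m}^i_{-2}(q)$ as a canonical summand of $\mathfrak{m}_{-2}(q) = D^2(q)/D(q)$, and the projection $\mathrm{pr}_i$ of \eqref{pr_i} surjects $(D_i)^2(q)/D_i(q)$ onto $\mathfrak{m}^i_{-2}(q)$. Under the identification $D_i(q) \cong \mathfrak{m}^i_{-1}$, the composition $\mathrm{pr}_i \circ A_i$ becomes the Lie algebra bracket $\mathrm{ad}\, X_{n_{i-1}+1}: \mathfrak{m}^i_{-1} \to \mathfrak{m}^i_{-2}$, which is surjective because $X_{n_{i-1}+1}(q)$ is an ad-generating element of the ad-surjective algebra $\mathfrak{m}^i$ (by Remark \ref{sum_ad_surj}). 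Hence $\dim \mathrm{Im}(A_i) \geq \dim \mathfrak{m}^i_{-2} = d_i$.

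For the matching upper bound I will feed in Lemma \ref{lemma step 1}, with the goal of showing that $\mathrm{pr}_i$ is injective on $\mathrm{Im}(A_i)$. Writing out, for each $l \in \mathcal{I}_i^1$,
\[
[X_{n_{i-1}+1}, X_l] = \sum_{k \in \mathcal{I}_i^1} c^k_{n_{i-1}+1, l}\, X_k + \sum_{s \in [1:m] \setminus \mathcal{I}_i^1} c^s_{n_{i-1}+1, l}\, X_s + \sum_{k > m} c^k_{n_{i-1}+1, l}\, X_k,
\]
the middle sum vanishes by Lemma \ref{lemma step 1}. The first sum lies in $D_i(q)$, while the third is transverse to $D(q)$ since the vectors $X_k(q) \bmod D(q)$ for $k > m$ form a basis of $D^2(q)/D(q)$. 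Consequently, if the class of this bracket modulo $D_i(q)$ lies in $\ker \mathrm{pr}_i = \bigl((D_i)^2(q) \cap D(q)\bigr)/D_i(q)$, then the third sum must vanish and the bracket sits in $D_i(q)$, so $A_i(X_l) = 0$ already. Thus $\mathrm{pr}_i|_{\mathrm{Im}(A_i)}$ is injective and $\dim \mathrm{Im}(A_i) \leq d_i$, closing the sandwich.

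The main obstacle is conceptual rather than computational: correctly identifying $\mathfrak{m}^i_{-2}$ as a canonical summand of $D^2(q)/D(q)$ via Proposition \ref{Tanaka_decomp_theorem}, so that $\mathrm{pr}_i \circ A_i$ really is the abstract Lie bracket inside $\mathfrak{m}^i$. The heavy lifting—extracting the vanishing coefficients $c^s_{n_{i-1}+1, l}$ from a minor of the augmented fundamental algebraic matrix—has already been done in Lemma \ref{lemma step 1}; once that is in hand, the corollary becomes a clean dimension count sandwiching $\dim \mathrm{Im}(A_i)$ between ad-surjectivity from below and the vanishing of cross-brackets from above.
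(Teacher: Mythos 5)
Your proposal is correct, and it reaches the corollary by a somewhat different route than the paper. The paper's own proof is a one-line re-instantiation: for any admissible $F_i$ one builds a new quasi-normal frame, adapted to the same tuple of $\mathrm{ad}$-generating elements, in which $X_{n_{i-1}+2},\dots,X_{n_{i-1}+d_i+1}$ span $F_i$, and observes that relation \eqref{FK} was the only property of that span used in the proof of Lemma \ref{lemma step 1}; the vanishing \eqref{c^k} for the new frame then yields \eqref{Im_indep_F}. You instead invoke Lemma \ref{lemma step 1} (equivalently Proposition \ref{step1_lemma1}) only once, for the original frame, extract from it the frame-independent equality $\dim \mathrm{Im}\bigl((\mathrm{ad}\, X_{n_{i-1}+1})_{\mathrm{mod}\, D_i}\bigr)=d_i$ --- the upper bound from injectivity of $\mathrm{pr}_i$ on the image, the lower bound from the identification of $\mathrm{pr}_i\circ(\mathrm{ad}\, X_{n_{i-1}+1})_{\mathrm{mod}\, D_i}$ with $\mathrm{ad}_{\mathfrak m^i(q)}X_{n_{i-1}+1}(q)$ together with the fact that $X_{n_{i-1}+1}(q)$ was chosen $\mathrm{ad}$-generating in the construction of the quasi-normal frame (Remark \ref{sum_ad_surj} only guarantees such a choice exists) --- and then finish by pure linear algebra. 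This buys something: you never re-run the determinant analysis for the perturbed frame, you do not actually need $F_i\subset H_{X_{n_{i-1}+1}}$, and you make explicit what the paper leaves implicit, namely that after Lemma \ref{lemma step 1} one knows $K_{X_{n_{i-1}+1}}=\ker\bigl(\mathrm{ad}_{\mathfrak m^i(q)}X_{n_{i-1}+1}(q)\bigr)$, which is precisely why \eqref{FK} suffices for the paper's re-instantiation (a priori a quasi-normal frame built on $F_i$ requires $F_i$ to be transversal to the kernel of the symbol map, not merely to $K_{X_{n_{i-1}+1}}$). Two cosmetic touch-ups: the injectivity of $\mathrm{pr}_i$ on the image should be stated for arbitrary elements $(\mathrm{ad}\, X_{n_{i-1}+1})_{\mathrm{mod}\, D_i}(Y)$, $Y\in D_i(q)$, not only for the frame vectors $X_l$ --- the identical computation works by linearity, or one can simply quote Proposition \ref{step1_lemma1}, whose statement is exactly this bijectivity of $\mathrm{pr}_i$ restricted to the image.
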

\begin{proof}
Indeed, previously we used $\mathrm{span}\{X_{n_{i-1}+1}, \ldots, X_{n_i}\}$ as a subspace $F_i$ but relation \eqref{FK} was the only property we actually used to get the conclusion of Lemma \ref{lemma step 1}.
\end{proof} 

We will denote the space in \eqref{Im_indep_F} by $\mathcal L_{X_{n_{i-1}+1}}$,
\begin{equation}
 \label{Li}
 \mathcal L_{X_{n_{i-1}+1}}:=
\mathrm{Im}  \bigl((\mathrm{ad}\, X_{n_{i-1}+1})_{\mathrm{mod} D_i}\bigr).
\end{equation}

\begin{rk}
\label{d=0_rem}
\emph{Note that if $d_i=0$, then  any element of $D_i$ is trivially $\mathrm{ad}$-generating element of $\mathfrak m_i$.}
\end{rk}
\begin{rk}
\label{d=1_rem}
\emph{If $d_i=1$, then $\dim K_{X_{n_{i-1}+1}}=\dim H_{X_{{n_{i-1}+1}}}=m_i-1$ but  $K_{X_{n_{i-1}+1}}\neq H_{X_{{n_{i-1}+1}}}$, because $X_{{n_{i-1}+1}}$ belongs to $K_{X_{n_{i-1}+1}}$ but does not belong to  $H_{X_{n_{i-1}+1}}$, so  $K_{X_{n_{i-1}+1}}\cap H_{X_{n_{i-1}+1}}$ is a codimension $1$ subspace $H_{X_{n_{i-1}+1}}$. Therefore, we can perturb the original quasi-normal frame $(X_1, \ldots, X_n)$ to a quasi-normal frame  $(\Tilde X_1, \ldots, \Tilde X_n)$  adapted to the same tuple of $\mathrm{ad}$-generating elements as the original frame   such that for every $i\in [1:k]$ and $j\in \mathcal I_i^1$ the line generated by $\Tilde X_j$ is transversal to $K_{X_{n_{i-1}+1}}$. Since $d_i=1$ and by Corollary \ref{indep_cor} $[X_{n_{i-1}+1}, \tilde X_j]\neq 0 \,\mathrm{mod} D$ (otherwise $\Tilde X_j\in K_{X_{n_{i-1}+1}}$), we get that $\tilde X_j$ is also $\mathrm{ad}$-generating element of $\mathfrak{m}_i$.}
\end{rk}
\subsection{Proof that $[D_r, D_t]\in \mathcal L_{X_{n_{r-1}+1}}+\mathcal L_{X_{n_{t-1}+1}} \quad \mathrm{mod}\,\, D_r+D_t\,\, r\neq t$ (toward the proof of Proposition \ref{Dij^2_involutive})}
\label{3dist_subsec} The claim in the title of the subsection is equivalent to 
\begin{equation}
\label{3dist_0} 
\begin{split}
~&c_{jl}^s=0
\quad \forall j\in \mathcal I_r^1, s\in \mathcal I_t^1,  l\in\mathcal I_i^1,\\
~& \text {where  } \{i, r, t\}\in [1:k] \text { are pairwise distinct.}
\end{split}
\end{equation}
In particular, for $k\leq 2$ equation \eqref{3dist_0} is void, so it is relevant to assume that $k\geq 3$.

 Given $i_0\in [1:k]$ consider submatrices $P_{i_0}$ of $[A|\Tilde b]$  consisting of rows with indices from the set 
\begin{equation}\label{rows of N_1}
  R_{i_0}:= \left( \bigcup_{i\in [1:k]\backslash \{i_0\}} [n_{i-1}+2: n_{i-1}+d_i+1] \right) \cup [n_{i_0-1}+1: n_{i_0-1}+d_{i_0}+1].
\end{equation}
From \eqref{Tanaka_decomposition} it follows that  $P_{i_0}$ is a block-diagonal matrix,
\begin{equation}
\label{P_i}
    P_{i_0} =\begin{pmatrix}
            {P_{i_0,1}} & 0 & 0 \\
            0 & \ddots & 0 \\
            0 & 0 & P_{i_0, k}
        \end{pmatrix},
\end{equation}
where $P_{i_0,i_0}$ is of size $(d_{1_0}+1)\times d_{i_0}$ and $P_{i_0,i}$, $i\neq i_0$ is of size $d_{i_0}\times d_{i_0}$. Note that by construction
\begin{equation}
 \label{PM}
 P_{i_0, i_0}=M_{i_0, i_0},
\end{equation}
where $M_{i_0,i_0}$ is defined in \eqref{M_i}.

Similar to the previous subsection, let $\Tilde b^{1}|_{P_{i_0}}$ be the subcolumn  of the column $\Tilde b^1$ consisting of the same rows as in $P_{i_0}$, 
\begin{Lemma}\label{lemma step 1 k com}
If for every triple  of pairwise distinct integers $\{i_0, r,t\} \in [1:k]$ the coefficient of all monomials of the form
\begin{equation}\label{monomial 4 k com}
\begin{split}
~&   u_j u_l u_s u_{n_{i_0-1}+1}^{d_{i_0}-1} \prod_{i\in [1:k]\backslash \{i_0\}}u_{n_{i-1}+1}^{d_i}, \quad \text{where  } j\in  [n_{r-1}+1:n_{r-1}+d_r+1], \\~&l\in [n_{t-1}+1:n_{t-1}+d_t+1], s\in [n_{{i_0}-1}+1:n_{i_0-1}+d_{i_0}+1]
\end{split}
\end{equation}
    in the determinant $\det [P_{i_0}| \Tilde b|_{R_{i_0}}]$ vanish, then 

  \begin{equation}
   \label{struct_3_distinct}
  \begin{split}
  ~& c_{jl}^s=0, \text{where  } j\in  [n_{r-1}+1:n_{r-1}+d_r+1], \\~&l\in [n_{t-1}+1:n_{t-1}+d_t+1], s\in [n_{{i_0}-1}+1:n_{i_0-1}+d_{i_0}+1]
  \end{split}
  \end{equation}  
 or, equivalently, 
 \begin{equation}
\label{struct_3_distinct_reform}
\begin{split}
~&[X_j, X_l]\in \mathcal L_r+\mathcal L_t\quad \mathrm{mod} (D_r+D_t), \\
~&j\in  [n_{r-1}+1:n_{r-1}+d_r+1], l\in [n_{t-1}+1:n_{t-1}+d_t+1].
\end{split}
\end{equation} 
\end{Lemma}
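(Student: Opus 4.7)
The plan is to adapt the overdetermined-minor strategy used in the proof of Lemma \ref{lemma step 1}. Since the fundamental algebraic system \eqref{A.phi.B} admits a smooth solution in a neighborhood of $q_0$, every maximal minor of the augmented matrix $[A|\tilde b]$ vanishes identically as a polynomial in the fiber coordinates $u$. In particular $\det[P_{i_0}|\tilde b|_{R_{i_0}}]\equiv 0$ for every choice of $i_0\in[1:k]$, so the coefficient of each monomial \eqref{monomial 4 k com} in its Leibniz expansion must vanish. I would expand this determinant, isolate the contributions to the distinguished monomial, and combine the resulting linear relations (for several orderings of the unordered triple $\{i_0,r,t\}$) with Proposition \ref{first divi}(3) to force \eqref{struct_3_distinct}.

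By the same variable-location bookkeeping used in the proof of Lemma \ref{lemma step 1}, the quasi-normal identities \eqref{quasi_normal_struct} show that $u_{n_{i-1}+1}$ appears with coefficient $1$ on the diagonal of each block $P_{i_0,i}$ for $i\neq i_0$, on the subdiagonal of $P_{i_0,i_0}$, and is never squared in any single entry (by Lemma \ref{b^1_the same} and Remark \ref{b>m_rem}). Since all non-$\tilde b$ entries are linear in the $u$'s while the $\tilde b$-entries are quadratic, matching the powers of the $u_{n_{i-1}+1}$'s prescribed by \eqref{monomial 4 k com} forces, in any contributing Leibniz term, that the $\tilde b$-factor be drawn from a row $s^{\ast}\in[n_{i_0-1}+1:n_{i_0-1}+d_{i_0}+1]$ (otherwise the block-diagonal structure \eqref{P_i} would leave some non-$i_0$ block with too few rows to fill its $d_i$ columns), and that every block $P_{i_0,i}$ with $i\notin\{r,t,i_0\}$ be filled along its identity-permutation diagonal, contributing exactly $u_{n_{i-1}+1}^{d_i}$.

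A short case analysis then classifies the surviving Leibniz terms according to which of $\{u_j,u_l,u_s\}$ come from off-diagonal entries of the blocks $P_{i_0,r},P_{i_0,t},P_{i_0,i_0}$ and which come from the quadratic $\tilde b^1_{s^{\ast}}$. Using the symbol decomposition \eqref{Tanaka_decomposition}, which forces $c^k_{\cdot,\cdot}=0$ for $k>m$ whenever the lower indices lie in distinct groups, together with items (1)--(2) of Proposition \ref{first divi}, every cross-block off-diagonal configuration is shown either to vanish outright or to cancel against its partner produced by the column operation \eqref{column_op}. What survives is the dominant contribution in which $s^{\ast}=s$, the blocks $P_{i_0,r}$ and $P_{i_0,t}$ are filled by their diagonals, $P_{i_0,i_0}$ is filled by its subdiagonal modulo the row $s$, and the factor $u_j u_l$ is supplied by $\tilde b^1_s$. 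Using \eqref{b^{1,2}_i}, this contribution reduces, up to a nonzero sign coming from the normalized factors, to
\begin{equation*}
\bigl(\alpha^2_{n_{i_0-1}+1}-\alpha^2_{n_{t-1}+1}\bigr)\,c^l_{j,s}\;+\;\bigl(\alpha^2_{n_{i_0-1}+1}-\alpha^2_{n_{r-1}+1}\bigr)\,c^j_{l,s}\;=\;0.
\end{equation*}

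Running the same analysis after permuting the unordered triple $\{i_0,r,t\}$ as $(r,i_0,t)$ (that is, working with the determinant $\det[P_r|\tilde b|_{R_r}]$ and the corresponding monomial in which $j$ and $s$ swap their roles) produces the independent relation
\begin{equation*}
\bigl(\alpha^2_{n_{r-1}+1}-\alpha^2_{n_{t-1}+1}\bigr)\,c^l_{j,s}\;+\;\bigl(\alpha^2_{n_{r-1}+1}-\alpha^2_{n_{i_0-1}+1}\bigr)\,c^s_{j,l}\;=\;0.
\end{equation*}
Together with the cyclic identity of Proposition \ref{first divi}(3) applied to the triple $(j,l,s)$, these furnish a $3\times 3$ linear system in the three structure constants $\bigl(c^s_{j,l},c^j_{l,s},c^l_{j,s}\bigr)$ whose coefficient matrix has determinant proportional to the nonzero product $\bigl(\alpha^2_{n_{i_0-1}+1}-\alpha^2_{n_{t-1}+1}\bigr)\bigl(\alpha^2_{n_{r-1}+1}-\alpha^2_{n_{t-1}+1}\bigr)\bigl(\alpha^2_{n_{r-1}+1}-\alpha^2_{n_{i_0-1}+1}\bigr)$. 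Hence this system admits only the trivial solution, which is exactly \eqref{struct_3_distinct}. The equivalence with \eqref{struct_3_distinct_reform} is then immediate from the definition \eqref{Li} of $\mathcal L_{X_{n_{i-1}+1}}$ together with the symbol decomposition \eqref{Tanaka_decomposition}. I expect the principal obstacle to be the case analysis of the third paragraph: whereas Lemma \ref{lemma step 1} involved only two special groups, here three distinct groups $r,t,i_0$ interact simultaneously, and the combinatorial exclusion of non-dominant off-diagonal Leibniz configurations requires a careful and somewhat lengthy application of the quasi-normal normalization \eqref{quasi_normal_struct} and of Proposition \ref{first divi}(1)--(2).
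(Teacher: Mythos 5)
Your proposal follows essentially the same route as the paper's own proof: the vanishing of the coefficients of the monomials \eqref{monomial 4 k com} in $\det[P_{i_0}|\tilde b|_{R_{i_0}}]$ is reduced, by locating the variables $u_{n_{i-1}+1}$ via \eqref{quasi_normal_struct} and forcing the $\tilde b$-factor into a row of the $i_0$-block, to the two-term relation given by the coefficient of $u_j u_l$ in $\tilde b_s$, and the relations obtained by permuting the roles of $i_0, r, t$ are then combined with item (3) of Proposition \ref{first divi} into a nonsingular $3\times 3$ homogeneous system, exactly as in the paper's items (D$1_i$)--(D$4_i$), (E1)--(E3), (F1)--(F2) and equations \eqref{3dist_3}--\eqref{item3_first_div}. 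The only deviations are cosmetic: you take a different (but equally valid) permuted relation as the second equation, so your $3\times 3$ matrix differs from the paper's while still having nonzero determinant, and your intermediate case analysis is left as a sketch, where the paper argues by power counting in the $u_{n_{i-1}+1}$'s together with the split $s=n_{i_0-1}+1$ versus $s\neq n_{i_0-1}+1$ rather than by cancellations tied to the column operation \eqref{column_op}.
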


\begin{proof}
Without loss of generality, we can assume that $r=1$, $t=2$, and $i_0=k$. Further,  using \eqref{PM},
similar to  the proof of Lemma \ref{lemma step 1} (statements of  items ($A1_i$) and $(A2_i)$ there), one can conclude that  
the variable $u_{n_{i-1}+1}$, $i\in [1:k]$ appears in the following  columns of the augmented matrix $[N_1|\Tilde b|_{T_{1}}]$ only:
\begin{enumerate}
\item[(D$1_i$)] The columns containing all columns of $P_{k,i}$ if $i\in [1:k-1]$, and all columns of $P_{k,i}$ except the last one, if $i=k$. Moreover,  in each of these columns $u_{n_{i-1}+1}$ appears exactly in the diagonal entry $(P_{k, i})_{j j}$  if $i\in [1:k-1]$, and in the entry $(P_{k, i})_{j+1 j}$,  i.e., in the entry situated right below the diagonal of the block $P_{k, i}$, if $i=k$. Besides, the coefficient of $u_{n_{i-1}+1}$ in this entry is equal to $1$;
\item[(D$2_i$)] the last column of the matrix $[ P_k|\Tilde b|_{R_{k}}]$;
\end{enumerate}
 As before, we can set all irrelevant variables $u$'s (which are not in \eqref{monomial 4 k com}) to zero. From this and the normalization condition \eqref{quasi_normal_struct} it follows that
 \begin{enumerate}
 \item [(D3)]   The only nonzero entry of the first row of $P_{k,k}$ is the entry in the $s-n_{k-1}-1$ column, i.e. $P_{k,k}|_{1, s-n_{k-1}1}$ and it is equal to $-u_s$.   
 \end{enumerate}
 Finally, the statement of $(A_3)_i$ with $M_1$ replaced by $P_k$, $M_{1,i}$ replaced by $P_{1,i}$, and $S_1$ replaced by $R_k$ holds true, i.e.,
\begin{enumerate}
 \item [(D$4_i$)] The components  of the column vector ${b}^1|_{R_{1}}$  in the rows corresponding to the rows  of the block $P_{k, i}$ of $P_k$ do not contain $u_{n_{i-1}+1}$.
 \end{enumerate}
 
 From item ($\mathrm{D}1_i$)  above, 
the fact that $u_{n_{i-1}+1}$ appears in the monomial \eqref{monomial 4 k com}  in the power $d_i$ for $i\in [1:k-1]$ and in the power not less than  $d_i-1$ if $i=k$, and that by Lemma \ref{b^1_the same} and ($D4_i$) the participating entries do not contain $(u_{n_{i-1}+1})^2$ it follows that in the Leibniz formula for the determinant of the matrix $(P_k|\Tilde b^{1}|_{R_{k}})$ the contribution to the monomial 
\eqref{monomial 4 k com}  from the block $P_{k,i}$ comes only from the following terms:
\begin{enumerate}
\item[(E1)] \emph{Terms containing  all factors of the form $ (P_{k,i})_{j\,j}$, $j\in [1:d_i]$, if $i\in[1:k-1]$.} Otherwise, if one of them is omitted, then a non-diagonal entry of $P_{k, i}$ has to be used as well, therefore another diagonal term of  $P_{k, i}$ has to be omitted, however in this way there is no way to reach  $u_{n_{i-1}}$ in the power $d_i$ in the resulting monomial. 
\item[(E2)] \emph{Terms containing  all factors of the form $ (P_{k,i})_{j\,j+1}$, $j\in [1:d_k]$,  if $i=k$ and $s=n_{k-1}+1$.} 
In this case, the desired contribution must contain the component of $\Tilde b^1$ which belongs to the row, corresponding to the first row of $P_{k,k}$, i.e., $\Tilde b_{n_{k-1}+1}$. 
 
\item [(E3)] 
\emph{Terms containing all factors of the form  
$(P_{k,k+1})_{j+1 \,j}$, $j\in[1: d_k$ except one, if $i=k$ and $s\neq n_{k-1}+1$ .}Then by (D3) and the fact that $P_k$ has the block-diagonal form as in \eqref{P_i} it follows that the omitted factor is $(P{k,k}_{s-n_{k-1},s-n_{k-1}-1}$. Hence,  the desired contribution must contain the component of $\Tilde b^1$ which belongs to the row, corresponding to the $s-n_{k-1}$th row of $P_{k,k}$, i.e., the component $\Tilde b_s$. \end{enumerate}
So, we have two cases
\begin{enumerate}
\item [(F1)] $s=n_{k-1}+1$. In this case, by combining (E1) and (E2),  we conclude that 
the coefficient of the monomial
\eqref{monomial 4 k com} in  $\det [P_{i_0}| \Tilde b|_{R_{i_0}}]$ is, up to a sign, equal to the coefficient of the monomial $u_ju_l$ in $\Tilde b_{n_{k-1}+1}$, which by \eqref{b^{1,2}_i} is equal to  
 \begin{equation}
 \label{3dist_1}
        \left( \aa_k^2 - \aa_1^2 \right) c^j_{l,n_{k-1}+1} + \left( \aa_k^2 - \aa_2^2 \right)c^l_{j,n_{k-1}+1}.
    \end{equation}

\item[(F2)] $s\neq n_{k-1}+1$.
In this case, by combining (E1) and (E3) and using \eqref{b^{1,2}_i}, we conclude   that 
the coefficient of the monomial
\eqref{monomial 4 k com} in  $\det [P_{i_0}| \Tilde b|_{R_{i_0}}]$ is, up to a sign, equal to the coefficient of the monomial $u_ju_l$ in $\Tilde b_{s}$, which by \eqref{b^{1,2}_i} is equal to   
\eqref{monomial 4 k com}
\begin{equation}
\label{3dist_2}
\left( \aa_k^2 - \aa_1^2 \right) c^j_{l,s} + \left( \aa_k^2 - \aa_2^2 \right)c^l_{j,s}=0.
\end{equation}
\end{enumerate}
So, by the assumptions of the lemma expressions in \eqref{3dist_1} and \eqref{3dist_2} vanish.

Repeating the same arguments for arbitrary pairwise distinct triple $\{r, s, i_0\}\in [1:k]$ instead of $\{1,2,k\}$ we will get that
\begin{equation}
\label{3dist_3}
\begin{split}
        ~&\left( \aa_{i_0}^2 - \aa_r^2 \right) c^j_{l,s} + \left( \aa_{i_0}^2 - \aa_t^2 \right)c^l_{j,s}=0, \quad  \forall j\in  [n_{r-1}+1:n_{r-1}+d_r+1], \\~&l\in [n_{t-1}+1:n_{t-1}+d_t+1], s\in [n_{{i_0}-1}+1:n_{i_0-1}+d_{i_0}+1].
\end{split}
\end{equation}
Permuting the indices  in \eqref{3dist_3} we can get
\begin{equation}
\label{3dist_4}
\left( \aa_t^2 - \aa_r^2 \right) c^j_{s,l} + \left( \aa_t^2 - \aa_{i_0}^2 \right)c^s_{j,l}=
\left( \aa_r^2 - \aa_t^2 \right) c^j_{l,s} + \left( \aa_t^2 - \aa_{i_0}^2 \right)c^s_{j,l}=
0.
\end{equation}

Finally  item (3) of Proposition \ref{first divi} implies
\begin{equation}
\label{item3_first_div}
\left( \aa_{i_0}^2 - \aa_t^2 \right) c^j_{s,l} + \left( \aa_r^2 - \aa_{i_0}^2 \right)c^l_{j,s}+\left( \aa_r^2 - \aa_{t}^2 \right)c^s_{j,l}=0.
\end{equation}

 The linear homogeneous system with respect to $c^{j}_{l,s}$, $c^{l}_{j, s}$, and $c^{s}_{j, s}$
    consisting of equations  \eqref{3dist_3}, \eqref{3dist_4}, and \eqref{item3_first_div} has the matrix
    \begin{equation}
    \begin{pmatrix}
    \aa_{i_0}^2-\aa_r^2&\aa_{i_0}^2-\aa_t^2&0\\\aa_r^2-\aa_t^2&0
    &\aa_t^2-\aa_{i_0}^2\\ \aa_{i_0}^2-\aa_t^2&
    \aa_r^2-\aa_{i_0}^2&\aa_r^2-\aa_t^2
    \end{pmatrix},    
    \end{equation}
    whose determinant is equal to
    $$(\aa_t^2-\aa_{i_0}^2)\Bigl(
    (\aa_{i_0}^2-\aa_j^2)^2+(\aa_r^2-\aa_t^2)^2+(\aa_{i_0}^2-\aa_t^2)^2\Bigr)$$
  and is not zero as by our assumption that  $\aa_{i_0}^2$,  $\aa_r^2$, and  $\aa_t^2$ are pairwise distinct.
  This implies \eqref{struct_3_distinct}
  \end{proof}

Lemma \ref{lemma step 1 k com} 
proves only a subset of relations from \eqref{3dist_0}.   Using the flexibility given by Corollary \ref{indep_cor} one can show that \eqref{3dist_0} holds not for the original quasi-normal frame but for its perturbation adapted to the same tuple of $\mathrm{ad}$-generating vector fields $\{X_{n_{i-1}+1}\}_{i=1}^k$.

\begin{Coro}
\label{ils_cor}
Let $K_{X_{n_{i-1}+1}}$ and $H_{X_{n_{i-1}+1}}$ be as in \eqref{K_def} and \eqref{H_def}, respectively.  For every $i\in [1:k]$ let $Y_i$  be either equal to $X_{n_{i-1}+1}$ or  a normalized (i.e., $g(Y_i, Y_i)=1)$ local section of $H_{X_{n_{i-1}+1}}$ so that
\begin{equation}
\label{Y_trans_K} 
Y_i\notin K_{X_{n_{i-1}+1}}.
\end{equation}
Then for every $r\neq t$ from $[1:k]$
\begin{equation}
\label{struct_3_distinct_reform_1}
[Y_r, Y_t]\in \mathcal L_r+\mathcal L_t\quad \mathrm{mod} (D_r+D_t).
\end{equation}
\end{Coro}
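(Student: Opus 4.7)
\textbf{Plan for the proof of Corollary \ref{ils_cor}.} The statement is a refinement of Lemma \ref{lemma step 1 k com}, and the natural strategy is to exploit the flexibility granted by Corollary \ref{indep_cor} in choosing a quasi-normal frame adapted to the fixed tuple $\{X_{n_{i-1}+1}\}_{i=1}^k$ of $\mathrm{ad}$-generating elements. Concretely, the plan is to construct a new quasi-normal frame $(\tilde X_1,\ldots,\tilde X_n)$ adapted to the same tuple in which $Y_r$ and $Y_t$ appear at positions lying inside the index ranges $[n_{r-1}+1 : n_{r-1}+d_r+1]$ and $[n_{t-1}+1 : n_{t-1}+d_t+1]$ that are already controlled by Lemma \ref{lemma step 1 k com}, and then to invoke that lemma verbatim.

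First, for each $i\in\{r,t\}$ with $Y_i\neq X_{n_{i-1}+1}$ one must have $d_i\geq 1$ (otherwise $K_{X_{n_{i-1}+1}}=D_i$ and no admissible $Y_i$ exists). Since $(\mathrm{ad}\, X_{n_{i-1}+1})_{\mathrm{mod}\,D_i}$ is surjective, $\dim K_{X_{n_{i-1}+1}}=m_i-d_i$, and because $X_{n_{i-1}+1}\in K_{X_{n_{i-1}+1}}\setminus H_{X_{n_{i-1}+1}}$ we get $\dim(K_{X_{n_{i-1}+1}}\cap H_{X_{n_{i-1}+1}})=m_i-d_i-1$. Since $Y_i\in H_{X_{n_{i-1}+1}}\setminus K_{X_{n_{i-1}+1}}$, an elementary transversality argument inside the $(m_i-1)$-dimensional space $H_{X_{n_{i-1}+1}}$ allows me to extend $Y_i$ to a $d_i$-dimensional subspace $F_i\subset H_{X_{n_{i-1}+1}}$ satisfying $F_i\cap K_{X_{n_{i-1}+1}}=0$. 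I would then pick a $g_1$-orthonormal basis $\tilde X_{n_{i-1}+2}:=Y_i,\,\tilde X_{n_{i-1}+3},\ldots,\tilde X_{n_{i-1}+d_i+1}$ of $F_i$ (using Gram--Schmidt starting from $Y_i$, which is already normalized) and complete it, together with $X_{n_{i-1}+1}$, to a $g_1$-orthonormal basis of $D_i$. For indices $i\in\{r,t\}$ with $Y_i=X_{n_{i-1}+1}$, and for indices $i\notin\{r,t\}$, I would keep the original basis of $D_i$. The extension to a full frame of $TM$ is then defined by the quasi-normal recipe $\tilde X_{m+e_{i-1}+j}:=[X_{n_{i-1}+1},\tilde X_{n_{i-1}+j+1}]$, exactly as in \eqref{quasi_complete}.

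The key point making this construction work is Corollary \ref{indep_cor}: the condition $F_i\cap K_{X_{n_{i-1}+1}}=0$ guarantees that the resulting vectors $\{\tilde X_{m+e_{i-1}+j}\}_{j=1}^{d_i}$ span $\mathcal{L}_{X_{n_{i-1}+1}}$, and combined with the bijectivity of $\mathrm{pr}_i$ on $\mathcal{L}_{X_{n_{i-1}+1}}$ (Proposition \ref{step1_lemma1}) they remain linearly independent modulo $D$, so $(\tilde X_1,\ldots,\tilde X_n)$ is a genuine quasi-normal frame adapted to the original $\mathrm{ad}$-generating tuple. Applying Lemma \ref{lemma step 1 k com} to this frame yields the analog of \eqref{struct_3_distinct_reform} for the $\tilde X_j$'s; since by construction $Y_r=\tilde X_{j_0}$ for some $j_0\in[n_{r-1}+1:n_{r-1}+d_r+1]$ (namely $j_0=n_{r-1}+1$ if $Y_r=X_{n_{r-1}+1}$, and $j_0=n_{r-1}+2$ otherwise), and likewise for $Y_t$, the relation \eqref{struct_3_distinct_reform_1} follows immediately. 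The only genuine obstacle is verifying that the new tuple $(\tilde X_1,\ldots,\tilde X_n)$ is indeed quasi-normal in the sense used in Section 4; this reduces cleanly to Corollary \ref{indep_cor}, which is the sole nontrivial input beyond Lemma \ref{lemma step 1 k com} itself.
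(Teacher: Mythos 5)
Your proposal is correct and follows essentially the same route as the paper's own (very brief) proof: one places $Y_i$ into a quasi-normal frame adapted to the same tuple of $\mathrm{ad}$-generating elements, either as $X_{n_{i-1}+1}$ or as $X_{n_{i-1}+2}$, and then applies Lemma \ref{lemma step 1 k com} to conclude \eqref{struct_3_distinct_reform_1}. Your additional verification, via Corollary \ref{indep_cor} and Proposition \ref{step1_lemma1}, that the modified tuple is still a genuine quasi-normal frame merely spells out details the paper leaves implicit.
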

\begin{proof}
Indeed, \eqref{Y_trans_K} implies that we can find a quasi-normal frame $(X_1, \ldots, X_n)$ such that either $X_{n_{i-1}+1}=Y_i$ or  $X_{n_{i-1}+2}=Y_i$. Then \eqref{struct_3_distinct_reform_1} follows from \eqref{struct_3_distinct_reform} of lemma \ref{lemma step 1 k com}.
\end{proof}

If $d_i\geq 1$, then $\dim K_{X_{n_{i-1}+1}}\geq \dim H_{X_{{n_{i-1}+1}}}$ and $K_{X_{n_{i-1}+1}}\neq H_{X_{{n_{i-1}+1}}}$. The latter holds because $X_{{n_{i-1}+1}}$ is in  $K_{X_{n_{i-1}+1}}$ but not in $H_{X_{n_{i-1}+1}}$. So, in this case the complement of  $K_{X_{n_{i-1}+1}}\cap H_{X_{{n_{i-1}+1}}}$ to $H_{X_{{n_{i-1}+1}}}$  is open and dense in $H_{X_{{n_{i-1}+1}}}$. Therefore, by a finite number  of consecutive small perturbations of the original quasi-normal frame $(X_1, \ldots, X_n)$,  one can build a quasi-normal frame  $(\Tilde X_1, \ldots, \Tilde X_n)$  adapted to the same tuple of $\mathrm{ad}$-generating elements as the original frame   such that for every $r\in [1:k]$ 
\begin{itemize}
\item if $d_r\geq 1$, then every $j\in \mathcal I_r^1\backslash \{n_{r-1}+1\}$, the vector field  $\tilde X_j$  is in the complement of  $K_{X_{n_{i-1}+1}}\cap H_{X_{{n_{i-1}+1}}}$ to $H_{X_{{n_{i-1}+1}}}$;
\item if $d_r=0$, then no additional conditions on $\tilde X_j$ with $j\in \mathcal I_r^1$ are impose and by permutation any $\tilde X_j$ can be seen as $X_{n_{r-1}+1}$ (note that in this case any element of $D_r$ is trivially $\mathrm{ad}$-generating element of $\mathfrak m_r$). 
\end{itemize} 
So, by \eqref{struct_3_distinct_reform_1} this frame satisfies
\begin{equation}
\label{struct_3_distinct_reform_2}
[X_j, X_l]\in \mathcal L_r+\mathcal L_t\quad \mathrm{mod} (D_r+D_t), 
j\in  \mathcal I_r^1, l\in \mathcal I_t^1, \quad r\neq t\in [1:k],
\end{equation}
which is equivalent to
\eqref{3dist_0}. 

\begin{rk}
\label{pert_indep_remark} 
\emph{Note that if \eqref{struct_3_distinct_reform_2} holds for some quasi-normal frame, then it holds for any quasi-normal frame adapted to the same tuple of $\mathrm{ad}$-generating elements as the original one.}
\end{rk}

As the direct consequence of \eqref{b^{1,2}_i} and \eqref{3dist_0}, we get that  for any triple $\{i, r, t\}$ of pairwise distinct integers from $[1:k]$  and any  $j\in \mathcal I_i^1$, $j\in \mathcal I_r^1$, and $l\in\mathcal I_t^1$, the polynomial $\Tilde b_j$ does not contain a monomial $u_r u_t$. Combining this with Lemma \ref{b^1_the same}, we get 

\begin{Lemma}
\label {b^1_the same_1}
Given $j\in \mathcal 
I_i^1$, 
$\Tilde b_j$ does not contain monomials $u_r u_l$ with $r, l\in [1:m]\backslash \mathcal I_i^1$, or,equivalently, every monomial of $\Tilde b_j$ must contain a variable $u_r$ with $r\in \mathcal 
I_i^1$.
\end{Lemma}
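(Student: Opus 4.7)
The plan is to reduce the claim to a case analysis on which blocks of the decomposition \eqref{D12} the indices $r,l$ belong to, and then combine Lemma \ref{b^1_the same} with the vanishing relations \eqref{3dist_0} established just above the lemma.

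First, I would substitute the expression \eqref{qjk} for $q_{jl}$ into \eqref{b^{1,2}_i} to obtain, for $j\in \mathcal I_i^1$, the fully expanded polynomial
\[
\Tilde b_j \;=\; \sum_{v\neq i}\bigl((\alpha_{n_{i-1}+1})^2-(\alpha_{n_{v-1}+1})^2\bigr)\sum_{l'\in \mathcal I_v^1}\sum_{s=1}^{m} c^{l'}_{sj}\, u_s u_{l'}.
\]
From this it is immediate that every monomial of $\Tilde b_j$ is of the form $u_s u_{l'}$ with at least one factor indexed by some $\mathcal I_v^1$, $v\neq i$. For a prospective monomial $u_r u_l$ with $r\in \mathcal I_v^1$ and $l\in \mathcal I_t^1$ (both $v,t\neq i$), its coefficient in $\Tilde b_j$ is the two-term sum obtained by matching $(s,l')=(r,l)$ and $(s,l')=(l,r)$, namely
\[
\bigl((\alpha_{n_{i-1}+1})^2-(\alpha_{n_{t-1}+1})^2\bigr)\, c^l_{rj} \;+\; \bigl((\alpha_{n_{i-1}+1})^2-(\alpha_{n_{v-1}+1})^2\bigr)\, c^r_{lj}.
\]

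The argument then splits cleanly into two cases. When $v=t$, this is exactly the configuration already handled by Lemma \ref{b^1_the same}, which asserts the vanishing of any such coefficient. When $v\neq t$, the triple $\{i,v,t\}$ consists of pairwise distinct indices in $[1:k]$, so the relations \eqref{3dist_0} (obtained from Lemma \ref{lemma step 1 k com} via the perturbation argument of Corollary \ref{ils_cor}) force $c^l_{rj}=0$ and $c^r_{lj}=0$, killing the coefficient again. Since these two cases exhaust all $r,l\in [1:m]\setminus \mathcal I_i^1$, no such monomial can appear in $\Tilde b_j$, proving the equivalent conclusion that every monomial of $\Tilde b_j$ must contain a variable $u_r$ with $r\in \mathcal I_i^1$.

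I do not expect any genuine obstacle: the whole statement is a direct repackaging of Lemma \ref{b^1_the same} together with the just-established \eqref{3dist_0}. The only point requiring attention is to cover the off-diagonal configuration $v\neq t$ in addition to the diagonal one $v=t$, and it is precisely for this off-diagonal case that the three-block analysis of Lemma \ref{lemma step 1 k com} was carried out.
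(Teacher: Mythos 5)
Your proposal is correct and follows essentially the same route as the paper: the paper also obtains the off-diagonal case (indices in two distinct blocks, both different from $i$) as a direct consequence of \eqref{b^{1,2}_i} together with \eqref{3dist_0}, and then handles the same-block case by invoking Lemma \ref{b^1_the same}. Your only addition is writing out the expanded coefficient of $u_r u_l$ explicitly, which the paper leaves implicit.
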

Moreover, from \eqref{b^{1,2}_i} and Lemma \ref{lemma step 1} it follows that 
\begin{Coro}
\label{first_bi_coro}
  For every $i\in [1:k]$
  \begin{equation}
   \label{first_b_i}
   \Tilde b_{n_{i-1}+1}=0.
  \end{equation}
\end{Coro}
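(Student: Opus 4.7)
The plan is to expand $\Tilde b^1_{n_{i-1}+1}$ explicitly using the formula \eqref{b^{1,2}_i} and then show that every monomial coefficient is already forced to vanish by Lemma \ref{lemma step 1}, with Lemma \ref{b^1_the same_1} handling the remaining indices.

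Concretely, substituting \eqref{qjk} with $j=n_{i-1}+1$ into \eqref{b^{1,2}_i} gives
\begin{equation*}
\Tilde b^1_{n_{i-1}+1}=\sum_{v\neq i}\bigl((\alpha_{n_{i-1}+1})^2-(\alpha_{n_{v-1}+1})^2\bigr)\sum_{l\in \mathcal I_v^1}\sum_{s=1}^m c_{s,n_{i-1}+1}^l\, u_s u_l.
\end{equation*}
Thus the corollary reduces to verifying that $c^l_{s,n_{i-1}+1}=0$ whenever $l\in \mathcal I_v^1$ with $v\neq i$ and $s\in[1:m]$. Using the antisymmetry $c^l_{s,n_{i-1}+1}=-c^l_{n_{i-1}+1,s}$, this is equivalent to $c^l_{n_{i-1}+1,s}=0$ for all such indices.

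To dispose of these, I would appeal to Lemma \ref{lemma step 1} with $i_0=i$, whose conclusion, after swapping the roles of its letters ``$l$'' and ``$s$'' with ours, is precisely $c^l_{n_{i-1}+1,s}=0$ provided $s\in \mathcal I_i^1$ and $l\notin \mathcal I_i^1$. The condition $l\notin \mathcal I_i^1$ is automatic since $l\in \mathcal I_v^1$ with $v\neq i$, so the only remaining gap is the case $s\notin \mathcal I_i^1$. This is exactly where Lemma \ref{b^1_the same_1} enters: every monomial appearing in $\Tilde b_{n_{i-1}+1}$ must involve a factor $u_r$ with $r\in \mathcal I_i^1$, and since $u_l$ in the expansion above has $l\notin \mathcal I_i^1$, that factor must come from $u_s$, forcing $s\in \mathcal I_i^1$. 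So monomials with $s\notin \mathcal I_i^1$ are already known to be absent, and for the remaining monomials Lemma \ref{lemma step 1} applies directly.

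There is no substantive obstacle here; the argument is essentially a bookkeeping consolidation of three previously established facts (the explicit formula \eqref{b^{1,2}_i} for $\Tilde b^1_j$, the monomial restriction from Lemma \ref{b^1_the same_1}, and the structure-function vanishing from Lemma \ref{lemma step 1}). The only point requiring care is correctly matching the index roles when invoking Lemma \ref{lemma step 1} — its conclusion fixes which of the two lower indices plays the role of the ``inner'' index ($\mathcal I_i^1$-valued) versus the ``outer'' one, and one has to use the Lie-bracket antisymmetry to move the index $n_{i-1}+1$ into the correct slot before applying the lemma.
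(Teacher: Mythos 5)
Your argument is correct and is essentially the paper's own: the paper derives the corollary from \eqref{b^{1,2}_i} and Lemma \ref{lemma step 1}, stating it immediately after Lemma \ref{b^1_the same_1}, whose role (disposing of the monomials $u_s u_l$ with both indices outside $\mathcal I_i^1$) you simply make explicit. The only loose phrase is that the corollary ``reduces to'' the vanishing of every individual $c^l_{s,n_{i-1}+1}$ with $s\in[1:m]$; your actual two-case argument correctly needs only the vanishing of the total monomial coefficients (via Lemma \ref{b^1_the same_1}) together with the individual vanishing from Lemma \ref{lemma step 1} in the case $s\in\mathcal I_i^1$, so nothing is amiss.
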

\subsection{Proof of Proposition \ref{step1_lemma2}}
\label{Prop4.6_sec}
The statement of Proposition \ref{step1_lemma2} is equivalent to showing that
\begin{equation}
        D_i^2(q) \cap \bigoplus_{j\in[1:k]\backslash\{i\}}D_j(q) = \{0\}. 
\end{equation} 
In terms of structure functions of a quasi-normal frame, it is equivalent to showing that
\begin{equation}\label{target 2}
    c^s_{lr} = 0, \text{ for } l, r \in \mathcal{I}_i^1, \, s \in [1:m]\backslash \mathcal{I}_i^1 .
\end{equation}

\begin{rk}
\label{d11_rem}
\emph{Note that by Remarks \ref{d=0_rem} and \ref{d=1_rem}, if $d_i=0$ or $d_i=1$ for some $i\in [1:k] $
one can perturb the original quasi-normal frame to a quasi-normal frame $(\Tilde X_1, \ldots, \Tilde X_n)$ for which all $\Tilde X_j$ are $\mathrm{ad}$-generating elements of $\mathfrak m_i$ (for $d_i=0$ any quasi-normal frame satisfy this property). This implies that for this (maybe perturbed) frame  relation \eqref{target 2} will follow from Lemma \ref{lemma step 1} because we can apply this Lemma for the frame obtained from the original quasi-normal frame by an appropriate permutation.}   
\end{rk}

To show \eqref{target 2}, first, given $i_0\in [1:k]$  construct a submatrix $N_i$ of $A$ with row indices
\begin{equation}
\label{Tdef}
  T_{i_0}:= \left(\bigcup_{i=1}^k [n_{i-1}+1: n_{i-1}+d_i ]\right) \cup\{m+e_{i_0-1}+1\}.
\end{equation}
Then $N_{i_0}$ has the following form:
\begin{equation}
\label{Ni}
    N_{i_0} = 
     \begin{pmatrix}
            N_{i_0,1} & 0 & 0 \\
            0 & \ddots & 0 \\
            0 & 0 & N_{i_0,k}\\
        \hline
        ~&a^2_{n_{i_0-1}+1}&~
    \end{pmatrix},
\end{equation}
where $N_{i_0, j}$ is of size $d_i \times d_i$, and $a^2_{n_{i_0-1}+1}$ is the $(n_{i_0-1}+1)$st row of the matrix $A^2$ from the second layer of the fundamental algebraic system \eqref{A.phi.B}. 
Moreover 
\begin{equation}
\label{Nij}
 N_{i_0, j}=\begin{cases}
 M_{i_0,j},& \text{if  }j\neq i_0\\
 M_{s,i_0}, & \text {if } j=i_0 \,\,(\text{here }  s\neq i_0),
 \end{cases}   
\end{equation}
where $M_{i,j}$ are as in \eqref{M_i}. Note that by Remark \ref{M_indep_rem} the right hand-side in the second line of \eqref{Nij} is independent of $s\neq i_0$.

Denote by
$b|_{T_{i_0}}$ the subcolumn  of the column $b$ of the fundamental algebraic system  \eqref{A.phi.B} consisting of the same rows as in matrix $N_{i_0}$, i.e., the rows of $b$ indexed by  the set $T_{i_0}$ as in  \eqref{Tdef}.

\begin{Lemma}
\label{lemma p2}
 Assume that 
 \eqref{c^k} holds and  given $i_0\in [1:k]$ 
 the coefficients of all monomials of the form
\begin{equation}
\label{monomial 3}
\begin{split}
~&u_l u_s u_j\left(\prod_{i=1}^ku_{n_{i-1}+d_i+1}\right) \left(\prod_{i=1}^k
   (u_{n_{i-1}+1})^{d_i-1}\right) \quad \text{with }\\ 
 ~&  l\in [n_{i_0-1}+2:n_{i_0-1}+d_{i_0}+1], s\in [1:m]\backslash \mathcal I_{i_0}^1, \text{ and } j\in \mathcal I_{i_0}^2 
\end{split}
\end{equation}
in $\det([N_{i_0}|b|_{T_{i_0}}]$ vanish. 
Then
\begin{equation}
\label{cm_1+1js}
    c^{s}_{lr} = 0, \quad  l,r\in [n_{i_0-1}+2:n_{i_0-1}+d_{i_0}+1], \text{ and } s\in [1:m]\backslash \mathcal I_{i_0}^1.
\end{equation}
\end{Lemma}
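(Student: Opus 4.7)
The overall strategy mirrors the proofs of Lemmas \ref{lemma step 1} and \ref{lemma step 1 k com}: expand the polynomial identity $\det([N_{i_0}|b|_{T_{i_0}}])\equiv 0$ via the Leibniz formula, isolate the Leibniz terms that can produce the target monomial \eqref{monomial 3}, and read off the coefficient, which must vanish because the fundamental algebraic system is over-determined and solvable.

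First I would determine, for each first-layer block $N_{i_0,i}$ (of size $d_i\times d_i$), where each variable $u_{n_{i-1}+1}$ and $u_{n_{i-1}+d_i+1}$ can possibly appear. The quasi-normal identities \eqref{quasi_normal_struct}, combined with \eqref{c^k} and \eqref{3dist_0} established in the previous two lemmas, force $u_{n_{i-1}+1}$ to sit only on the $d_i-1$ subdiagonal entries and $u_{n_{i-1}+d_i+1}$ to appear in the top-right corner entry $(1,d_i)$ (with coefficient $-1$). Since \eqref{monomial 3} carries $u_{n_{i-1}+1}^{d_i-1}$ together with one $u_{n_{i-1}+d_i+1}$ for each $i$, every block must be used in essentially the unique way: all $d_i-1$ subdiagonal entries plus the $(1,d_i)$ corner. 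This part of the analysis is a direct analogue of the discussion of items (A$1_i$)--(A$3_i$) and (B$1_i$) in the proof of Lemma \ref{lemma step 1}.

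The decisive new feature compared with the previous two lemmas is the factor $u_j$ with $j\in \mathcal{I}_{i_0}^2$, i.e. $j>m$. None of the first-layer entries $q_{\bullet\bullet}$ carry $u_j$ with $j>m$, and by Remark \ref{b>m_rem} neither do the components of $\tilde b^1$. Hence the only source of this factor is the second-layer row $a^2_{n_{i_0-1}+1}$. Using its definition
\begin{equation*}
a^2_{n_{i_0-1}+1,k} \;=\; \vec h_1\bigl(q_{n_{i_0-1}+1,k}\bigr) + \sum_{l=m+1}^{n} q_{n_{i_0-1}+1,l}\,q_{lk},
\end{equation*}
only the $\partial_{u_\bullet}$-part of $\vec h_1$ can bring in a $u_j$ with $j>m$, and it does so through an explicit polynomial in the structure constants. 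After applying the quasi-normal relations \eqref{quasi_normal_struct} together with the already-known vanishings \eqref{c^k} and \eqref{3dist_0}, the coefficient of $u_s u_j$ in $a^2_{n_{i_0-1}+1,k}$ reduces, for suitable $k\in\mathcal{I}_{i_0}^2$, to a product of a normalized ad-surjective coefficient and the target constant $c^{s}_{lr}$ with $l,r\in[n_{i_0-1}+2:n_{i_0-1}+d_{i_0}+1]$.

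Combining these pieces, the coefficient of the monomial \eqref{monomial 3} in $\det([N_{i_0}|b|_{T_{i_0}}])$ collapses, modulo earlier-established vanishings, to a nonzero scalar multiple (depending on the differences $\alpha_{n_{i_0-1}+1}^2-\alpha_{n_{i-1}+1}^2$ for $i\neq i_0$) of $c^{s}_{lr}$. Equating it to zero gives \eqref{cm_1+1js}. The hard part will be the careful bookkeeping in this last step: one must track simultaneously the contributions from the $\vec h_1(q_{n_{i_0-1}+1,k})$ term in $a^2$ (whose $u_i Y_i$-part differentiates the structure functions themselves and has to be ruled out by degree/Tanaka-decomposition considerations) and the contribution from the $b$-column entry paired with the second-layer row in the Leibniz expansion, and verify that all competing terms are killed by \eqref{c^k}, \eqref{3dist_0}, Lemma \ref{b^1_the same_1} and Corollary \ref{first_bi_coro}, so that only the single term proportional to $c^{s}_{lr}$ survives.
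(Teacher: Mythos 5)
Your global strategy coincides with the paper's (Leibniz expansion of $\det([N_{i_0}|b|_{T_{i_0}}])$ and localization of the factor $u_j$, $j\in\mathcal I_{i_0}^2$, to the single second-layer row), but the decisive computation is misplaced and the closing argument is missing. In the paper the target constant $c^s_{lr}$ does \emph{not} arise inside the second-layer entry: the entry $a^2_{n_{i_0-1}+1,\,m+t}$ contributes the coefficient of $u_{n_{i_0-1}+1}u_j$, which equals $-\langle v^j, v^{m+t}\rangle$ with $v^x$ as in \eqref{vs} (data of the ad-generating element only), while $c^s_{lr}$ enters through the $b$-column factor $\Tilde b_r$ taken in a first-layer row of the $i_0$th block, whose coefficient of $u_lu_s$ is $(\alpha_{n_{i_0-1}+1}^2-\alpha_{n_{v-1}+1}^2)c^s_{lr}$ by \eqref{b^{1,2}_i}. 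Your scenario, in which $a^2_{n_{i_0-1}+1,\,k}$ supplies $u_su_j$ and thereby ``a normalized ad-surjective coefficient times $c^s_{lr}$'', is not correct: the coefficient of $u_su_j$ in that entry is $\sum_i c^{k}_{i,\,n_{i_0-1}+1}c^j_{s\,i}$, which does not involve $c^s_{lr}$ at all; moreover such a term cannot be completed to the monomial \eqref{monomial 3}, since the $b$-column would then have to be taken either in the first row of the $i_0$th block, where $\Tilde b_{n_{i_0-1}+1}=0$ by Corollary \ref{first_bi_coro}, or in a row whose removal makes the power of $u_{n_{i_0-1}+1}$ unattainable.

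As a consequence, your final claim that the coefficient of a single monomial ``collapses to a nonzero scalar multiple of $c^s_{lr}$'' is false, and it hides exactly the step where ad-surjectivity is used. The actual coefficient is, up to sign, $(\alpha_{n_{i_0-1}+1}^2-\alpha_{n_{v-1}+1}^2)\langle\sum_r \mathrm{sgn}(\sigma_r)\,c^s_{lr}\,v^{m+r-1},\, v^j\rangle$ as in \eqref{G2a&b_con}: one monomial yields only one linear relation mixing all the $c^s_{lr}$, and one must let $j$ range over all of $\mathcal I_{i_0}^2$ and invoke the linear independence of $v^{m+1},\dots,v^{m+d_{i_0}}$ (the ad-generating property of $X_{n_{i_0-1}+1}$) to separate the individual constants and obtain \eqref{cm_1+1js}. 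Two further gaps: you never exclude the possibility that the factor $u_j$ comes from the $b$-entry of the second-layer row itself — this is precisely Sublemma \ref{last_last_coro}, whose proof is the nontrivial computation of Appendix \ref{appendix B} — and your assertion that every block is used ``in essentially the unique way'' overlooks the case analysis needed when the first row of the block containing $s$ has $-u_s$ as one of its entries; the paper handles this by first proving the partial vanishing \eqref{cm_1+1js_inter} and then using it to kill the competing Leibniz terms (cases (G2c) and (G2d)).
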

\begin{proof} Without loss of generality we can assume that $i_0=1$ and that $s\in \mathcal I_2^1$.  Let $\Tilde b_{T_1}$  
be  the subcolumn  of the column $\Tilde b$ consisting of the rows of $\Tilde b$ indexed by  the set $T_{1}$, where $\Tilde b$ is as in \eqref{b^{1,2}_i}.

Further,  using \eqref{Nij},
similar to  the proof of Lemma \ref{lemma step 1} (statements of  items ($A1_i$) and $(A2_i)$ there), one can conclude that  
the variable $u_{n_{i-1}+1}$, $i\in [1:k]$ appears in the following  columns of the augmented matrix $[N_1|\Tilde b|_{T_{1}}]$ only:
\begin{enumerate}
\item[$(\mathrm{F}1_i)$] The columns containing all columns of $N_{1,i}$ except the last one. Moreover,  in each of these columns $u_{n_{i-1}+1}$ appears exactly in the entry $(N_{1, i})_{j+1 j}$, i.e., in the entry situated right below the diagonal of the block $N_{1, i}$. Besides, the coefficient of $u_{n_{i-1}+1}$ in this entry is equal to $1$;
\item[$(\mathrm{F}2_i)$] the last column of the matrix $[ N_1|\Tilde b|_{T_{1}}]$;
\item[$(\mathrm{F}3_i)$] the last row of  the matrix $[ N_1|\Tilde b|_{T_{1}}]$.
\end{enumerate}
Besides,  applying the   normalization conditions \eqref{quasi_normal_struct} to   \eqref{b^{1,2}_i} one gets
\begin{enumerate}
\item [(F$4_i)$] 
The components  of the column vector $\Tilde b|_{T_{1}}$  in the rows corresponding to the rows  of the block $N_{1, i}$ of $N_1$ do not contain $u_{n_{i-1}+1}$.
\end{enumerate}
The following sublemma is important in the sequel, but its proof consists of tedious computations and is postponed to Appendix \ref{appendix B}:

\begin{Sublemma}
\label{last_last_coro}
The entry from the last column and the last row of $[N_1|
\Tilde b|_{T_1}]$, i.e., $\Tilde b_{m+1}$, is independent of $u_j$ with $j\in \mathcal I_1^2$.
\end{Sublemma}
 
From Sublemma \ref{last_last_coro} and the fact that the elements of the blocks $N_{1,v}$ defined by \eqref{Ni} do not depend on variables $u_j$ with $j>m$, it follows that a term $\mathcal L$ in the Leibniz formula for $\det [N_1|\Tilde b|_{T_{1}}]$, which contributes to the monomial \eqref{monomial 3}, cannot contain as a factor the entry from the last column and the last row of the augmented matrix $[N_1|\Tilde b|_{T_{1}}]$. Therefore, the term $\mathcal L$ must contain one factor of the form $\Tilde b_r$, where $r\in\displaystyle{\bigcup_{i=1}^k[n_{i-1}+2:n_{i-1}+d_i]}$ (here we also use Corollary \ref{first_bi_coro}) and one factor from the last row of  $[N_1|\Tilde b|_{T_{1}}]$, i.e., $a^2_{1, m+t}$. Moreover, by Remark \ref{b>m_rem} and since the second line of \eqref{a^2_{1, m+j}_0} is independent of  $u_j$ with $j>m$ , we must have 
\begin{equation}
 \label{t_in}
 t\in [1:d_1].
\end{equation}

Now, for definiteness, assume that $r\in [n_{i_1-1}+2: n_{i_1-1}+d_{i_1}]$ for some $i_1\in [1:k]$. By property ($F4_i$), $\Tilde b_r$ does not contain $u_{n_{i_1-1}+1}$. Besides, the entry $(N_{1, i_1}){r-n_{i_1-1}, r-n_{i_1-1}-1}$ cannot be a factor in $\mathcal L$.  So, using properties 
(F$1_i$)-(F$3_i$) and the fact that 
$u_{n_{i_1-1}+1}$ appears in monomial \eqref{monomial 3} in the power not less than $d_{i_1}-1$ we get that the factor $a^2_{1,m+t}$ must contain $u_{n_{i_1-1}+1}$. Hence, the desired contribution of $a^2_{1,m+t}$ to the monomial \eqref{monomial 3} is equal to the coefficient of the monomial $u_{n_{i_1-1}+1} u_j$ in $a^2_{1,m+t}$.

To find this coefficient note that  $a^{2}_{1, m+d_1}$ is given by \eqref{a^2_{1, m+j}_0}. From \eqref{qjk} it follows that the second term of \eqref{a^2_{1, m+j}_0} depends only on $u_i$'s with $i\in[1:m]$, so it does not contribute to the required monomial. Therefore, we have to find the contribution of the first term, i.e., of $\vec h_1(q_{1, m+t})$. From \eqref{qjk}, the decomposition of the Tanaka symbol \eqref{Tanaka_decomposition}, and the normalization conditions \eqref{quasi_normal_struct} it follows that 
\begin{equation}
   q_{1, m+t} =-u_{t+1} - \sum_{x=d_1+2}^{n_1} c^{m+d_1}_{1x} u_x.
\end{equation}
Then, using \eqref{huj}
\begin{equation}
\label{h1q1d_1+1}
\begin{split}
 ~&  \vec h_1(q_{1 m+t})= -\Vec{h}_1(u_{t+1} + \sum_{x=d_1+2}^{n_1} c^{m+t}_{1x} u_x )=\\
~&-\bigl( \sum_{v=1}^n q_{t+1 v}u_v+\sum_{x=d_1+2}^{n_1} \sum_{v=1}^n  c^{m+t}_{1x}q_{x v}u_v +\sum_{x=d_1+2}^{n_1} \vec h_1(c^{m+t}_{1x}) u_x\bigr).
\end{split}   
\end{equation}
The last term in \eqref{h1q1d_1+1} depends on $u_w$'s with $w\in[1:m]$ only and does not contribute to the coefficient of the monomial $u_{n_{i_1-1}+1} u_{j}$  with $j\in \mathcal I_1^2$ in  $\vec h_1(q_{1, m+d_1})$.
As $q_{ik}$  depends on $u_i$'s with $i\in[1:m]$ only (see \eqref{qjk}),  in the first two terms of \eqref{h1q1d_1+1} only summands with $v=j$  contribute to the coefficient of  monomial $u_{n_{i_1-1}+1} u_{j}$ in  $\vec h_1(q_{1, m+d_1})$.  So, again by \eqref{qjk}, this  coefficient is equal to
\begin{equation}\label{case1: term1_i1}
    -c_{n_{i_1-1} t+1}^{j}-\sum_{x=d_1+2}^{n_1} c^{j}_{n_{i_1-1}x} c^{m+t}_{1x}.
\end{equation}
Since $j\in \mathcal I_1^2$ , by the decomposition of the Tanaka symbol \eqref{Tanaka_decomposition}, the expression in \eqref{case1: term1_i1} is equal to zero for $i_1\in [2:k]$. So a nonzero contribution is obtained only if 
\begin{equation}
\label{i1}
  i_1=1 \Rightarrow r\in[2:d_1].  
\end{equation}
 In this case the coefficient of  monomial $u_1 u_{j}$ in  $\vec h_1(q_{1, m+d_1})$ is equal to
\begin{equation}\label{case1: term1_1}
    -c_{1 t+1}^{j}-\sum_{x=d_1+2}^{n_1} c^{j}_{1x} c^{m+t}_{1x}= -\sum_{x=2}^{n_1} 
 c^{j}_{1x} c^{m+t}_{1x},
\end{equation}
where the last equality follows from 
the normalization conditions \eqref{quasi_normal_struct}.  
If we set
\begin{equation}
\label{vs}
    v^s = 
.
\end{equation}
It is easy to check that the maximal rank of this matrix (as a function of C's) is equal to $3$, which implies that $r=3$. Also, this matrix is not equal to zero if $(C_1,\ldots C_5)\neq 0$, which means that $\mathfrak m_{-1}$ meets the center trivially.

Finally note that $\mathfrak m$ is not decomposable. Assuming the converse, i.e., that $\mathfrak m=\mathfrak m^1\oplus \mathfrak m^2$ for some nonzero fundamental graded Lie algebra $\mathfrak m^1$ and $\mathfrak m^2$. Without loss of generality assume that 
\begin{equation}
\label{dim12_neq}
    \dim\, \mathfrak m^1_{-1}\geq \dim\, \mathfrak m^2_{-1}.
\end{equation}
 Since $\mathfrak m_{-1}$ meets the center trivially, it is impossible that $\dim\, \mathfrak m^2_{-1}=1$. Hence by \eqref{dim12_neq} and the fact that $\dim \mathfrak m_{-1}=5$,  we have that  $\dim\, \mathfrak m^2_{-1}=2$ and the algebra $\mathfrak m_{-1}$ is nothing but the $3$-dimensional Heisenberg algebra. Therefore for every nonzero $X\in \mathfrak m^2_{-1}$, the rank of $\mathrm{ad} X$ is equal to $1$. However, it is straightforward to show that if the rank of the matrix \eqref{AdX_matrix} is not greater than $1$, then $(C_1,\ldots C_5)= 0$, which leads to the contradiction. So $\mathfrak m$ is indecomposable.

 An alternative, more conceptual way to prove indecomposability of $\mathfrak m$ is to observe that otherwise, each component in its decomposition will have $-2$ degree part of dimension not greater than $3$ (but not equal to $0$) and by Proposition  \ref{codim3} each component is $\mathrm{ad}$-surjective. Then by Remark \ref{sum_ad_surj}, $\mathfrak m$ is $\mathrm{ad}$-surjective, which is not the case.
\end{example}

\section{Proof of Sublemma \ref{last_last_coro} }\label{appendix B}
Let us derive the expression for $\Tilde b_{m+f}$ with $f\in [1:m_1]$. For the proof of Sublemmma \ref{last_last_coro} we need the case $f=1$ only but the cases of more general $f$ are needed in subsection \ref{D2D3_sec}, in particular in Lemma \ref{4.5_lem_2}.
First, by column operations \eqref{column_op}
\begin{equation}
\label{b^{1,2}_last}
 \Tilde{b}_{m+f}=b^2_f- \sum_{i=1}^k(\alpha_{n_{i-1}+1})^2 \sum_{t=e_{i-1}}^{e_i} (a_{f, m+t})^2 u_{m+t},
\end{equation}
where by \eqref{elem.A} and \eqref{term.of.b.rec.form}  
\begin{align}
~& a^2_{f, m+j}=\vec{h}_1(q_{f,m+j}) + \sum_{r = m+1}^{n}q_{f,r} q_{r,m+j} \label{a^2_{1, m+j}},\\
~&b^2_f= \vec{h}_1(b^1_f) -\sum_{i=1}^k(\alpha_{n_{i-1}+1})^2\sum_{r = m+1}^{n}  \sum_{w\in \mathcal I_i^1}  q_{1r}q_{rw} u_w,
\label{b^2_1}
\end{align}
and $q_{jk}$ and $b^1_f$ are as in \eqref{qjk} and  \eqref{b in first group},
respectively.
Note that from the decomposition of the Tanaka symbol \eqref{Tanaka_decomposition}, it follows that 
\begin{equation}
\label{qjk_0}
q_{wr} = 0,\quad  \forall w\in \mathcal I_i^1, r\in \mathcal I_v^2 \text{ with } i\neq v.
\end{equation}

Substituting \eqref{qjk_0} into \eqref{a^2_{1, m+j}}, we get
\begin{equation}
 \label{a^2_{1, m+j}_0}
a^2_{f, m+t}=\begin{cases}
\vec{h}_1(q_{f,m+t}) + \displaystyle{\sum_{r = m+1}^{m+d_1}q_{fr} q_{r, m+t}},&  t\in [1:d_1]\\
\displaystyle{\sum_{r = m+1}^{m+d_1}q_{fr} q_{r,m+t}},&  t \in [1:n-m]\backslash [1:d_1].
\end{cases}
\end{equation}
Moreover, \eqref{qjk_0} implies that \eqref{b^2_1} can be rewritten as follows:
\begin{equation}
b^2_f= \vec{h}_1(b^1_f) -\sum_{i=1}^k(\alpha_{n_{i-1}+1})^2\sum_{r = m+1}^{m+d_1}  \sum_{v\in \mathcal I_i^1}  q_{fr}q_{rv} u_v.
\label{b^2_1_n}
\end{equation}
By \eqref{b^{1,2}_last}, \eqref{a^2_{1, m+j}_0} , and \eqref{b^2_1_n},  we have 
\begin{equation}
\label{b1,2 step 1}
    \begin{split}
        \Tilde{b}_{m+f} &=  
         \vec{h}_1(b^1_f) -
        \sum_{i=1}^k(\alpha_{n_{i-1}+1})^2\sum_{r = m+1}^{m+d_1}  \sum_{w\in \mathcal I_i^1}  q_{fr}q_{rw} u_w
        \\
        &\quad - \sum_{i=1}^k(\alpha_{n_{i-1}+1})^2 \sum_{t=e_{i-1}+1}^{e_i} (a_{f, m+t})^2 u_{m+t}.
    \end{split}
\end{equation}
Using \eqref{b in first group} and \eqref{huj}, we get 

\begin{small}
\begin{equation}
\begin{split}
\label{hb_1,1}
     &  \Vec{h}_1(b^1_f) = \Vec{h}_1\left( 
         (\alpha_{1})^2 \sum_{r=m+1}^{m+d_1} q_{fr}u_r + \sum _{i=2}^k\left( (\alpha_{1})^2 -(\alpha_{n_{i-1}+1})^2\right) \sum_{w\in\mathcal I_i^1} q_{fw}u_w\right) \\
       & = \alpha_1^2 \sum_{r=m+1}^{m+d_1} \Vec{h}_1(q_{fr})u_r + \alpha_1^2 \sum_{r=m+1}^{m+d_1} q_{fr}\Vec{h}_1(u_r)+ 
       \sum _{i=2}^k\left( (\alpha_{1})^2 -(\alpha_{n_{i-1}+1})^2\right) \sum_{w\in\mathcal I_i^1} \Vec{h}_1(q_{fw})u_w+ \\
        &\quad \sum _{i=2}^k\left( (\alpha_{1})^2 -(\alpha_{n_{i-1}+1})^2\right) \sum_{w\in\mathcal I_i^1} q_{fw}\Vec{h}_1(u_w)
       = \alpha_1^2 \sum_{r=m+1}^{m+d_1} \Vec{h}_1(q_{fr})u_r +   \alpha_1^2\sum_{r=m+1}^{m+d_1} \sum_{x=1}^n q_{fr} q_{rx}u_x+\\
        & \sum _{i=2}^k\left( (\alpha_{1})^2 -(\alpha_{n_{i-1}+1})^2\right) \Bigl(\sum_{w\in\mathcal I_i^1} \Bigl(\Vec{h}_1(q_{fw})u_w+
          \sum_{x=1}^n q_{fw}q_{wx}u_x\Bigr) \Bigr).
\end{split}
\end{equation}
\end{small}
Substituting \eqref{a^2_{1, m+j}_0}  and \eqref{hb_1,1} into \eqref{b1,2 step 1}, and taking  into account \eqref{a^2_{1, m+j}_0} again, we get the following cancellations: 
\begin{equation}
\label{b1,2 step 2}
    \begin{split}
         &\Tilde{b}_{m+f} = \cancel{\alpha_1^2 \sum_{r=m+1}^{m+d_1} \Vec{h}_1(q_{fr})u_r} +   \alpha_1^2\sum_{r=m+1}^{m+d_1} \sum_{w=1}^{n}q_{fr} q_{rw}u_w+\\
        & \sum _{i=2}^k\left( (\alpha_{1})^2 -(\alpha_{n_{i-1}+1})^2\right) \Bigl(\sum_{w\in\mathcal I_i^1} \Bigl(\Vec{h}_1(q_{fw})u_w+
          \sum_{x=1}^n q_{fw}q_{wx}u_x\Bigr) \Bigr)
       \\
        & 
        \sum_{i=1}^k(\alpha_{n_{i-1}+1})^2\sum_{r = m+1}^{m+d_1}  \sum_{w\in \mathcal I_i^1}  q_{fr}q_{rw} u_w-\cancel{(\alpha_{1})^2 \sum_{t=1}^{d_1} \vec{h}_1(q_{fm+t}) u_{m+t}} -\\
        &
        \sum_{i=1}^k(\alpha_{n_{i-1}+1})^2 \sum_{t=e_{i-1}+1}^{e_i} \sum_{r = m+1}^{m+d_1}q_{fr} q_{r, m+t} u_{m+t}.
    \end{split}
\end{equation}
Applying the relation $[1:n]=\displaystyle{\bigcup_{i=1}^k \mathcal I_i^1\cup \mathcal I_i^2 }$ to the sum in the second term of \eqref{b1,2 step 2}, we get
\begin{equation}
\label{b1,2 step 2_fin}
\begin{split}
 & \Tilde{b}_{m+f} =\sum _{i=2}^k\left( (\alpha_{1})^2 -(\alpha_{n_{i-1}+1})^2\right)\Bigl(\sum_{w\in\mathcal I_i^1} \Bigl(\Vec{h}_1(q_{fw})u_w+
          \sum_{x=1}^n q_{fw}q_{wx}u_x\Bigr) +\\&\sum_{r = m+1}^{m+d_1}  \sum_{w\in \mathcal I_i^1}   q_{fr} q_{rw} u_w+\sum_{r = m+1}^{m+d_1}\sum_{t\in \mathcal I_i^2}q_{fr} q_{rt} u_t\Bigr).
\end{split}
\end{equation}
From now on let $f=1$. Since by \eqref{qjk} all $q_{jk}$ depend on $u_i$'s with $i\in [1:m]$ only, from expression \eqref{b1,2 step 2_fin}  the variable $u_j$ with $j\in \mathcal I_1^2$  may appear only in the following terms of \eqref{b1,2 step 2_fin}:
\begin{equation}
\label{H2.1}
\sum _{i=2}^k\left( (\alpha_{1})^2 -(\alpha_{n_{i-1}+1})^2\right)\sum_{w\in\mathcal I_i^1} \bigl(\Vec{h}_1(q_{1w})u_w+q_{1w}q_{wj}u_j\bigr) ).
\end{equation}
 Moreover,   $q_{wj} =0$ for $w\in [1:m]\backslash \mathcal I_1^1$ and $j\in \mathcal I_1^2$ by \eqref{qjk_0}, so  the variable $u_j$ with $j\in \mathcal I_1^2$  may appear only in the following terms of \eqref{b1,2 step 2_fin}: 
 \begin{equation}
\label{H2.2}
\sum _{i=2}^k\left( (\alpha_{1})^2 -(\alpha_{n_{i-1}+1})^2\right)\sum_{w\in\mathcal I_i^1} \Vec{h}_1(q_{1w})u_w
\end{equation}
or, more precisely, in terms $\Vec{h}_1(q_{1w})$ with $w\in[1:m]\backslash \mathcal I_1^1$, $i\in[2:k]$. Let us analyze these terms. Using \eqref{qjk} and \eqref{huj} we get 
\begin{equation}
\label{hq}
    \begin{split}
        \Vec{h}_1(q_{1w}) &= \sum_{r=1}^m \left( c^w_{r1}\Vec{h}_1(u_r) + \Vec{h}_1(c^w_{r1})u_r  \right) \\
        &= \sum_{r=1}^m \sum_{x=1}^n c^w_{r1} q_{rx}u_x + \sum_{r=1}^m \Vec{h}_1(c^w_{r1})u_r.
    \end{split}
\end{equation}
The second term in \eqref{hq}
does not depend on  $u_j$ with $j\in \mathcal I_1^2$,  while to get  this $u_j$ in the first term the index $x$ must be equal to $j$.  So, the terms containing $u_{j}$ in $\Vec{h}_1(q_{1w})$  are
\begin{equation}
\label{term11}
    u_{j}\sum_{r=1}^m  c^w_{r1} q_{rj}.
\end{equation}
Recall that
\begin{equation}
\label{cases_fin}
    \begin{cases}
         c^w_{r1} = 0, & \text{ if } r\in \mathcal I_1^1
         \,\,w\in[1:m]\backslash \mathcal I_1^1;\\
         q_{rj} = 0, & \text{ if } r\in [1:m]\backslash \mathcal I_1^1.
    \end{cases}
\end{equation}
Here the first line comes from  \eqref{c^k} and the second line comes from \eqref{qjk_0}.
So, plugging \eqref{cases_fin} into \eqref{term11} we get that $u_{j}$ does not appear in $\Tilde{b}_{m+1}$ and the proof of sublemma \ref{last_last_coro} is completed.

\end{document}